\DeclareRobustCommand{\cyrtext}{%
  \fontencoding{T2A}\selectfont\def\encodingdefault{T2A}}
\DeclareRobustCommand{\textcyr}[1]{\leavevmode{\cyrtext #1}}
\numberwithin{equation}{section}
\theoremstyle{plain}
\newtheorem{thm}{\protect\theoremname}[section]
\theoremstyle{plain}
\newtheorem{prop}[thm]{\protect\propositionname}
\theoremstyle{definition}
\newtheorem{defn}[thm]{\protect\definitionname}
\theoremstyle{plain}
\newtheorem{lem}[thm]{\protect\lemmaname}
\theoremstyle{remark}
\newtheorem{rem}[thm]{\protect\remarkname}
\theoremstyle{remark}
\newtheorem{claim}[thm]{\protect\claimname}
\newenvironment{proof}[1][\protect\proofname]{\par
	\normalfont\topsep6\p@\@plus6\p@\relax
	\trivlist
	\itemindent\parindent
	\item[\hskip\labelsep\scshape #1]\ignorespaces
}{%
	\endtrivlist\@endpefalse
}
\providecommand{\proofname}{Proof}
\theoremstyle{plain}
\newtheorem{fact}[thm]{\protect\factname}
\providecommand{\claimname}{Claim}
\providecommand{\definitionname}{Definition}
\providecommand{\factname}{Fact}
\providecommand{\lemmaname}{Lemma}
\providecommand{\propositionname}{Proposition}
\providecommand{\remarkname}{Remark}
\providecommand{\theoremname}{Theorem}
\begin{document}
\title{Derivation of Euler's equations of perfect fluids from von Neumann's
equation with magnetic field}
\maketitle
\begin{center}
IMMANUEL BEN PORAT\footnote{Mathematical Institute, University of Oxford, Oxford OX2 6GG, UK.
Immanuel.BenPorat@maths.ox.ac.uk

Mathematical Reviews subject classification: 35J10, 35Q40. 

Key words: Mean field limits, semi-classical limits, {\small{}incompressible Euler, }von
Neumann's equation. } 
\par\end{center}

\medskip{}

\begin{abstract}
We give a rigorous derivation of the incompressible 2D Euler equation
from the von Neumann equation with an external magnetic field. The
convergence is with respect to the modulated energy functional, and
implies weak convergence in the sense of measures. This is the semi-classical
counterpart of theorem 1.2 in {[}D. Han-Kwan and M. Iacobelli. Proc.
Amer. Math. Soc., \textbf{149} (7):3045--3061 (2021){]}. Our proof
is based on a Gronwall estimate for the modulated energy functional,
which in turn heavily relies on a recent functional inequality due
to {[}S. Serfaty. Duke Math. J. \textbf{169}, 2887--2935 (2020){]}. 
\end{abstract}

\section{Introduction }

\textbf{General background.} In classical mechanics, Newton's second
law of motion (also known as the $N$--body problem) is the following
system of $2\times2\times N$ ODEs 

\begin{equation}
\left\{ \begin{array}{cc}
\dot{x_{k}}(t)=\xi_{k}(t), & x_{k}(0)=x_{k}^{in},\\
\dot{\xi_{k}}(t)=-\frac{1}{\epsilon}\left(\xi_{k}^{\bot}+\frac{1}{N}\underset{j:j\neq k}{\sum}\nabla V(x_{k}(t)-x_{j}(t))\right), & \xi_{k}(0)=\xi_{k}^{in},
\end{array}\right.1\leq k\leq N\label{newton system intro}
\end{equation}

where $x_{k}(t)\in\mathbb{R}^{2}$ (or $x_{k}(t)\in\mathbb{T}^{2}$
where $\mathbb{T}^{2}$ is the $2-$dimensional torus) and $\xi_{k}(t)\in\mathbb{R}^{2}$
are called the position and momentum respectively. From a physical
perspective, the system (\ref{newton system intro}) describes the
dynamics of the positions and momenta of $N$ identical point particles
of unit mass in $\mathbb{R}^{2}$, interacting via an interaction
potential $V$ in the presence of fixed, constant, strong magnetic
field. The parameter $N$ should be thought of as very large, while
the parameter $\epsilon>0$ as very small. We will be mostly concerned
with the magnetic regime, although at times we may refer to the non
magnetic regime as well, for which the corresponding dynamics are
governed by the system 

\begin{equation}
\left\{ \begin{array}{cc}
\dot{x_{k}}(t)=\xi_{k}(t), & x_{k}(0)=x_{k}^{in},\\
\dot{\xi_{k}}(t)=-\frac{1}{N}\underset{1\leq j\leq N,j\neq k}{\sum}\nabla V(x_{k}(t)-x_{j}(t)), & \xi_{k}(0)=\xi_{k}^{in}.
\end{array}\right.1\leq k\leq N.\label{eq:newton system non magnetic}
\end{equation}

On the other hand the Vlasov-Poisson system (specified for 2D) with
the same strong constant magnetic field on $[0,T]\times\mathbb{R}^{2}\times\mathbb{R}^{2}$
$([0,T]\times\mathbb{T}^{2}\times\mathbb{R}^{2})$ reads:

\begin{equation}
\left\{ \begin{array}{c}
\partial_{t}f^{\epsilon}+\xi\cdot\nabla_{x}f^{\epsilon}+\frac{1}{\epsilon}(-\nabla\Phi^{\epsilon}+\xi^{\bot})\cdot\nabla_{\xi}f^{\epsilon}=0,\\
\rho^{\epsilon}=1-\Delta\Phi^{\epsilon}.
\end{array}\right.\label{eq:-8-1}
\end{equation}

The unknown is a time dependent probability density function $f^{\epsilon}(t,x,\xi)$
on $\mathbb{R}^{2}\times\mathbb{R}^{2}$ $(\mathbb{T}^{2}\times\mathbb{R}^{2})$
and $\rho^{\epsilon}$ is a time dependent probability density function
on $\mathbb{R}^{2}$ ($\mathbb{T}^{2}$) defined by 

\[
\rho^{\epsilon}=\rho_{f}^{\epsilon}(t,x)\coloneqq\underset{\mathbb{R}^{2}}{\int}f^{\epsilon}(t,x,\xi)d\xi.
\]

Assuming that $V$ is chosen so that the system (\ref{newton system intro})
is well posed, denote by 
\[
Z_{N}^{t}\coloneqq(x_{1}(t),\xi_{1}(t),...,x_{N}(t),\xi_{N}(t))
\]
 the flow of the system (\ref{newton system intro}) and set $\mu_{Z_{N}}(t)\coloneqq\frac{1}{N}\stackrel[i=1]{N}{\sum}\delta_{Z_{N}^{t}}$.
Unless otherwise stated, we shall focus on the case where the position
variable $x$ is in $\mathbb{R}^{2}$. Denoting by $\mathcal{P}(\mathbb{R}^{d}\times\mathbb{R}^{d})$
the space of probability measures on $\mathbb{R}^{d}\times\mathbb{R}^{d}$,
we have the following classical result due to Klimontovich, which
enables to construct $(\epsilon,N)$-dependent solutions to the Vlasov-Poisson
(\ref{eq:-8-1}) starting from the system (\ref{newton system intro})
\begin{prop}
\begin{flushleft}
For each $N\geq1,\epsilon>0$ the map $t\mapsto\mu_{Z_{N}}(t)$ is
continuous on $[0,T]$ with values in $\mathcal{P}(\mathbb{R}^{2}\times\mathbb{R}^{2})$
equipped with the weak topology, and is a distributional solution
to equation (\ref{eq:-8-1}). 
\par\end{flushleft}

\end{prop}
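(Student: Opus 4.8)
The plan is to verify the two assertions of the proposition directly from the definition of the empirical measure and the fact that $Z_N^t$ solves the Newton system \eqref{newton system intro}.

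\textbf{Continuity.} First I would establish the continuity of $t\mapsto\mu_{Z_N}(t)$ in the weak topology. Fix a test function $\varphi\in C_b(\mathbb{R}^2\times\mathbb{R}^2)$; then $\langle\mu_{Z_N}(t),\varphi\rangle=\frac1N\sum_{i=1}^N\varphi(x_i(t),\xi_i(t))$. Since $V$ is assumed regular enough for \eqref{newton system intro} to be well posed, the trajectories $t\mapsto(x_i(t),\xi_i(t))$ are (at least) $C^1$ on $[0,T]$, hence continuous, and composing with the continuous function $\varphi$ and taking a finite average preserves continuity in $t$. This gives weak continuity of the curve of measures. (If one only wants continuity, boundedness of $\varphi$ is enough; the finite sum causes no issue of uniform integrability.)

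\textbf{Distributional solution.} Next I would check that $\mu_{Z_N}(t)$ solves \eqref{eq:-8-1} in the sense of distributions. The Vlasov field here is $b^\epsilon(t,x,\xi)=\bigl(\xi,\;\tfrac1\epsilon(-\nabla\Phi^\epsilon+\xi^\bot)\bigr)$, where $\Phi^\epsilon$ is determined from $\rho^\epsilon=\rho_{\mu_{Z_N}}^\epsilon$ via $\rho^\epsilon=1-\Delta\Phi^\epsilon$; in the empirical setting $-\nabla\Phi^\epsilon(t,x)=\tfrac1N\sum_{j}\nabla V(x-x_j(t))$ up to the background term, matching exactly the force in \eqref{newton system intro}. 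Take $\psi\in C_c^\infty([0,T)\times\mathbb{R}^2\times\mathbb{R}^2)$ and compute $\frac{d}{dt}\langle\mu_{Z_N}(t),\psi(t,\cdot,\cdot)\rangle=\frac1N\sum_i\frac{d}{dt}\psi(t,x_i(t),\xi_i(t))$. By the chain rule and the ODEs \eqref{newton system intro}, this equals $\frac1N\sum_i\bigl(\partial_t\psi+\xi_i\cdot\nabla_x\psi+\dot\xi_i\cdot\nabla_\xi\psi\bigr)(t,x_i(t),\xi_i(t))$, and substituting $\dot\xi_i=-\tfrac1\epsilon(\xi_i^\bot+\tfrac1N\sum_{j\neq i}\nabla V(x_i-x_j))$ reproduces $\langle\mu_{Z_N}(t),(\partial_t\psi+\xi\cdot\nabla_x\psi+\tfrac1\epsilon(-\nabla\Phi^\epsilon+\xi^\bot)\cdot\nabla_\xi\psi)\rangle$. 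Integrating in $t$ over $[0,T]$ and using $\psi(T,\cdot,\cdot)=0$ yields the weak formulation of \eqref{eq:-8-1}.

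\textbf{Main obstacle.} The one genuinely delicate point is the nonlinear term: one must make sense of $-\nabla\Phi^\epsilon\cdot\nabla_\xi\mu_{Z_N}$ when $\rho^\epsilon$ is a sum of Dirac masses. This is where the diagonal terms $j=k$ must be discarded — formally $\nabla V(0)$ appears, but with a potential singular at the origin (Coulomb-type) this term is simply excluded by definition in \eqref{newton system intro}, so the force felt by particle $k$ is exactly $-\nabla\Phi^\epsilon_{\neq k}(x_k)$; one should note that for the test-function pairing $\langle\mu_{Z_N},\nabla\Phi^\epsilon\cdot\nabla_\xi\psi\rangle$ the contribution is well defined because $\psi$ is smooth and the sum is finite, and it coincides with $\frac{1}{N^2}\sum_{i}\sum_{j\neq i}\nabla V(x_i-x_j)\cdot\nabla_\xi\psi(t,x_i,\xi_i)$ once the (measure-zero in time, or simply convention-excluded) diagonal is removed. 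Beyond this bookkeeping the proof is a routine chain-rule computation, so I expect essentially no analytic difficulty once the well-posedness of \eqref{newton system intro} — which is assumed — guarantees $C^1$ trajectories on $[0,T]$.
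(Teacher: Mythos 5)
The paper states this proposition without proof, crediting it to Klimontovich as a classical result, so there is no paper argument to compare against. Your proof is the standard Klimontovich argument and is essentially correct: continuity follows from $C^1$ trajectories and finite averaging, the distributional identity follows from the chain rule applied particle-by-particle, and you correctly isolate the one genuine subtlety, namely that the nonlinear term $\nabla\Phi^{\epsilon}\cdot\nabla_{\xi}\mu_{Z_{N}}$ must be interpreted with the self-interaction (diagonal) terms discarded, exactly as in the force in \eqref{newton system intro}.

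Two small points deserve tightening. First, the parenthetical ``measure-zero in time'' alternative is a red herring: the diagonal terms $j=k$ are not excluded because they vanish for a.e.\ $t$, but because the Klimontovich notion of solution is \emph{defined} by replacing the self-consistent field $-\nabla\Phi^{\epsilon}(x_{k})$ with the regularized expression $\frac{1}{N}\sum_{j\neq k}\nabla V(x_{k}-x_{j})$; for the Coulomb potential the diagonal contribution $\nabla V(0)$ is never finite, so the ``convention-excluded'' reading is the only correct one. Second, you should check your sign bookkeeping against the paper's equations: from $\rho^{\epsilon}=1-\Delta\Phi^{\epsilon}$ and $-\Delta V=\delta$ one gets $\nabla\Phi^{\epsilon}=\nabla V\star(\rho^{\epsilon}-1)$, so (discarding the constant-background gradient, which vanishes formally) $-\nabla\Phi^{\epsilon}=-\frac{1}{N}\sum_{j}\nabla V(\cdot-x_{j})$, opposite in sign to what you wrote; moreover, the $\xi^{\bot}$ terms in \eqref{newton system intro} and \eqref{eq:-8-1} as printed carry opposite signs, which appears to be a typo in the paper (compare the Vlasov system in {[}10{]}). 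None of this affects the structure of your argument, but the substitution step where you claim the ODEs ``reproduce'' the Vlasov field should be written out carefully so the signs cancel consistently.
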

As customary, we refer to the time dependent probability measure $\mu_{Z_{N}}(t)$
as the \textit{empirical measure} or as \textit{Klimontovich solution
to equation} \textit{(\ref{eq:-8-1})} (the latter name is justified
thanks to the above proposition). In what follows, we restrict our
attention to the case where $V$ is the Green function associated
to the negative Laplacian on $\mathbb{R}^{2}$ (also known as the
2D repulsive Coulomb potential), namely 

\begin{equation}
V(x)\coloneqq-\frac{1}{2\pi}\log(|x|).\label{eq:-1-2}
\end{equation}

Despite the fact that in this case $V$ obviously fails to satisfy
the conditions of the Cauchy-Lipschitz theorem, existence and uniqueness
of solution for the system (\ref{newton system intro}) is still ensured
provided that the positions of the particles are separated initially
(i.e. $x_{k}^{in}\neq x_{l}^{in}$ for $k\neq l$). The Vlasov-Poisson
system is also closely linked to a celebrated model from fluid dynamics,
namely the incompressible 2D Euler equation in vorticity formulation,
which reads 

\begin{equation}
u=(\nabla\psi)^{\bot},\ \omega=\Delta\psi,\ \partial_{t}\omega+\mathrm{div}(u\omega)=0,\label{E}
\end{equation}

where $u:[0,T]\times\mathbb{R}^{2}\rightarrow\mathbb{R}^{2}$, $\omega$
is a scalar field on $[0,T]\times\mathbb{R}^{2}$ and $v^{\bot}\coloneqq(-v_{2},v_{1})$
(if the domain is $\mathbb{T}^{2}$, it is obvious how to adjust the
formulation) . Formal considerations (see e.g section 6 in {[}3{]})
suggest that equation (\ref{E}) (on $\mathbb{T}^{2}$) can be derived
from equation (\ref{eq:-8-1}) in the limit as $\epsilon\rightarrow0$.
In the same work {[}3{]} (especially theorem 6.1), this derivation
has been made rigorous for sufficiently regular/decaying solutions
of equation (\ref{eq:-8-1}). The same problem has been also dealt
within {[}9{]}, where the authors employ compactness methods. Due
to the relation between equation (\ref{eq:-8-1}) and the system (\ref{newton system intro})
provided by Klimontovich's observation, it is natural to seek a derivation
of Euler from Newton's system in the presence\textbackslash absence
of a magnetic field. We stress that due to the singular nature of
Klimontovich solutions, it is far from straightforward to extend the
convergence result of {[}3{]} for Klimontovich solutions of (\ref{eq:-8-1}).
A recent striking functional inequality due to {[}{\small{}19}{]}
(to be discussed in more detail in the next section) has allowed to
overcome the difficulty created due to this singular behavior. This
inequality (to which we refer from now on as Serfaty's inequality)
allowed the same authors to derive the pressureless Euler equation
from Newton's system of ODEs in the absence of a magnetic field (system
(\ref{eq:newton system non magnetic})) and with monokinetic initial
data. The case of non-monokinetic initial data remains a widely open
problem. The derivation of the equation (\ref{E}) from Newton's second
law in the presence of a magnetic field was established in {[}10{]},
again through an argument heavily relying on the above mentioned inequality. 

\medskip{}

\textbf{Main contribution of the current work.} In this work, we focus
on the semi-classical universe. Therefore, we introduce the von Neumann
equation, which is the quantum analogue of Newton's system of ODEs: 

The Cauchy problem for the von Neumann equation with a vector potential
of the form $\frac{1}{2\epsilon}x^{\bot}$ reads

\begin{equation}
i\hbar\partial_{t}R_{N,\epsilon,\hbar}(t)=[\mathscr{H}_{N,\epsilon,\hbar},R_{N,\epsilon,\hbar}(t)],\ R_{N,\epsilon,\hbar}(0)=R_{N,\epsilon,\hbar}^{in},\label{eq:-5-1}
\end{equation}

where $\mathscr{H}_{N}\coloneqq\mathscr{H}_{N,\epsilon,\hbar}$ is
the quantum Hamiltonian defined by the formula

\[
\mathscr{H}_{N}\coloneqq\frac{1}{2}\stackrel[i=1]{N}{\sum}\underset{k=1,2}{\sum}(-i\hbar\partial_{x_{i}^{k}}+\frac{1}{2\epsilon}(x_{i}^{\bot})^{k})^{2}+\frac{1}{2}\stackrel[i=1]{N}{\sum}|x_{i}|^{2}+\frac{1}{N\epsilon}\underset{p<q}{\sum}V_{pq},
\]

and $V_{pq}$ is the multiplication operator corresponding to the
function $V(x_{p}-x_{q})$. As will be clarified in section 6, the
operator $\mathscr{H}_{N}$ can be viewed as a unbounded self-adjoint
operator on $L^{2}(\mathbb{R}^{2N})$. The \textit{Planck constant}
$\hbar>0$ should be viewed as a very small parameter, and thus the
asymptotics in the semi-classical setting are obtained as a triple
limit. For technical reasons we chose to include in the Hamiltonian
a quadratic confining potential of the form $\frac{1}{2}\stackrel[i=1]{N}{\sum}|x_{i}|^{2}$.
We elaborate on the reason for this choice in the next section. The
unknown $R_{N,\epsilon,\hbar}(t)$ is a symmetric density operator
i.e. a bounded operator on $L^{2}(\mathbb{R}^{2N})$ such that 
\[
R=R^{\ast}\geq0,\ \mathrm{trace}(R)=1
\]

and for all $\sigma\in\mathfrak{S}_{N}$ 
\[
U_{\sigma}R_{N,\epsilon,\hbar}U_{\sigma}^{\ast}F_{N}=R_{N,\epsilon,\hbar}F_{N}
\]

where $\mathfrak{S}_{N}$ is the symmetric group on $N$ elements
and where $U_{\sigma}$ is the operator defined for each $F_{N}\in L^{2}(\mathbb{R}^{2N})$
by 

\[
U_{\sigma}F_{N}(x_{1},...,x_{N})\coloneqq F_{N}(x_{\sigma^{-1}(1)},...,x_{\sigma^{-1}(N)}).
\]

In light of the previous discussion, it is natural to seek a derivation
of the incompressible Euler or pressureless Euler equation from the
von Neumann equation, in the presence/absence of a magnetic field
respectively. In order to compare a solution of the von Neumann equation
(which is an operator) with the vorticity solution of the Euler equation
(which is a time dependent function on the Euclidean space), one attaches
to $R_{N,\epsilon,\hbar}(t)$ a time dependent probability density
called the \textit{density of the first marginal of $R_{N,\epsilon,\hbar}(t)$}.
The explicit construction of this density is recalled in the next
section. Thus, deriving Euler from von Neumann reduces to proving
some kind of weak convergence of the density of the first marginal
to the vorticity as the parameters involved grow large or become small
(according to their physical interpretation). The derivation of the
pressureless Euler equation from the von Neumann equation was achieved
in {[}7{]} in the limit as $\frac{1}{N}+\hbar\rightarrow0$. One
of the interesting features of the method of proof of {[}7{]} is
the observation that Serfaty's inequality (which was originally applied
in the context of a classical mean field limit) can be adapted to
the semi-classical regime as well. This observation has also been
utilized in the recent work {[}16{]}, which proves a semi-classical
combined mean field quasineutral limit, which is a semi-classical
version of theorem 1.1 in {[}10{]}. As already mentioned, the second
main result of {[}10{]} is a derivation of equation (\ref{E}) from
the system (\ref{newton system intro}), to which the authors of {[}10{]}
refer to as a combined mean-field and gyrokinetic limit. Our new contribution-
stated precisely in theorem \ref{Main Thm} of the next section- is
a derivation of the incompressible Euler equation (\ref{E}) from
the von Neumann equation (\ref{eq:-5-1}), thereby complementing the
above-mentioned works {[}7{]}, {[}10{]},{[}16{]}. Otherwise put,
we prove a semi-classical combined mean field and gyrokinetic limit.
The recipe for passing from classical to quantum mechanics is summarized
neatly in {[}4{]}:

1. Functions on phase space are replaced by operators on the Hilbert
space of square integrable functions on the underlying configuration
space. 

2. Integration of functions is replaced by the trace of the corresponding
operators.

3. Coordinates $q$ of the configuration space are replaced by multiplication
operator $\widehat{q}$ by the variable $q$, while momentum coordinates
$p$ are replaced by the operator $\widehat{p}=-i\hbar\nabla$ ($\widehat{p}=-i\hbar\nabla+\frac{1}{2\epsilon}x^{\bot}$
in case a magnetic field is included). 

As typical in the theory of mean field limits, the argument in {[}10{]}
rests upon obtaining a Gronwall estimate for a time dependent quantity
which is known to control the weak convergence. This quantity is called
the\textit{ modulated energy}. Our argument is a modification of the
argument leading to a Gronwall estimate on the modulated energy in
{[}10{]} , according to the above mentioned rules 1-3, which is also
the central idea in the semi-classical combined mean field quasineutral
limit obtained in {[}16{]}. Nevertheless, it is important to point
out a few points in which the present work differ from {[}16{]}: 

First, we insist on working on the entire plane $\mathbb{R}^{2}$
rather than the torus $\mathbb{T}^{2}$, since the magnetic vector potential
in question is non-periodic and so it is apriori not obvious how to
even make sense of the modulated energy in the  $\mathbb{T}^{2}$ case. This in turn forces us to
include a quadratic confining potential, in order to make sure that
the quantum Hamiltonian can be viewed as an essentially self-adjoint
operator. Another point which requires some care when working in the
plane is the existence and uniqueness theory of solutions to the incompressible
2D Euler- for example, solutions with square summable velocity field
cannot have a vorticity with a distinguished sign (section 3.1.3 in
{[}14{]})- and therefore such solutions will be inadequate for the
question of interest, in which the vorticity is taken to be a probability
density. Finally, the example cooked up in order to witness the initial
vanishing of the modulated energy has to be adjusted to these new choices.
The utility of each one of the choices we just mentioned will become
clearer in the sequel.

The paper is organized as follows: 

In section 2 a semi-classical version of the modulated energy is introduced
along with other preliminaries, and in section 3 a Gronwall estimate
is established for this quantity. The calculations which lead to this
estimate are more tedious in comparison to the classical setting,
partially due to the fact that quantization gives rise to commutators
of a differential operator with a multiplication operator- which contributes
non zero terms of course. As in {[}7{]},{[}10{]},{[}16{]} this
estimate heavily relies on Serfaty's inequality. Section 4 aims to
explain how weak convergence is implied from this estimate- which
is a simple consequence of a different (yet intimately related) inequality
of Serfaty. In section 5 we construct an explicit example witnessing
the asymptotic vanishing of the initial modulated energy. Finally,
section 6 elaborates on the self-adjointness of the Hamiltonian and
related functional analytic material. 

\section*{Acknowledgement }

This paper is part of the author's work towards a PhD. I would like
to express my deepest graditute towards my supervisor Fran\textcyr{\cyrsdsc}ois
Golse for many fruitful discussions and for carefully reading previous
versions of this manuscript and providing insightful comments and
improvements. I would also like to thank Daniel Han-Kwan and Mikaela
Iacobelli for a helpful correspondence, as well as to Matthew Rosenzweig
for suggesting a few valuable references and offering comments which
improved clarity of exposition. The author declares no conflict of interest. This work was partially supported by \'Ecole Polytechnique and by the research grant "Stability analysis for nonlinear partial differential equations across multiscale applications". The datasets generated during and/or analysed during the current study are available from the corresponding author on reasonable request.

\section{\label{section 2 of 2} Preliminaries and Main Result}

The equation which is to be derived (to which it is customary to refer
to as the ``target equation'') is (\ref{E}). We will work with
an equivalent formulation which reads

\begin{equation}
\partial_{t}u+u\cdot\nabla u=-\nabla P,\ \mathrm{div}(u)=0.\label{eq:E2}
\end{equation}

Taking the divergence of both sides of (\ref{eq:E2}) gives 

\begin{equation}
-\Delta P=\mathrm{div}(u\cdot\nabla u)=\underset{i,j}{\sum}\partial_{x_{i}}u^{j}\partial_{x_{j}}u^{i}\coloneqq\mathfrak{U}.\label{eq:-3-2}
\end{equation}

We elaborate more on the objects related to the von Neumann equation
(\ref{eq:-5-1}). To this aim, let us fix some notations from Hilbert
space theory, adapting mainly the terminology introduced in {[}7{]}.
Set $\mathfrak{H}\coloneqq L^{2}(\mathbb{R}^{2})$ and for each $N\geq1$
denote $\mathfrak{H}_{N}\coloneqq\mathfrak{H}^{\otimes N}\simeq L^{2}(\mathbb{R}^{2N})$.
As customary, $\mathcal{L}(\mathfrak{H})$ stands for the normed space
of bounded linear operators on $\mathfrak{H}$, $\mathcal{L}^{1}(\mathfrak{H})$
stands for the normed space of trace class operators on $\mathfrak{H}$
and $\mathcal{L}^{2}(\mathfrak{H})$ stands for the normed space of
Hilbert Schmidt operators on $\mathfrak{H}$. For each $\sigma\in\mathfrak{S}_{N}$
(where $\mathfrak{S}_{N}$ is the group of permutations on $\{1,...,N\}$)
we define the operator $U_{\sigma}\in\mathcal{L}(\mathfrak{H}_{N})$
by 

\[
(U_{\sigma}\Psi_{N})(x_{1},...,x_{N})\coloneqq\Psi_{N}(x_{\sigma^{-1}(1)},...,x_{\sigma^{-1}(N)}).
\]

With this notation, we denote by $\mathcal{L}_{s}(\mathfrak{H}_{N})$
($\mathcal{L}_{s}^{1}(\mathfrak{H}_{N})$) the set of bounded (trace
class) operators $F_{N}\in\mathcal{L}(\mathfrak{H}_{N})(\mathcal{L}^{1}(\mathfrak{H}_{N}))$
such that $U_{\sigma}F_{N}U_{\sigma}^{\ast}=F_{N}$ for all $\sigma\in\mathfrak{S}_{N}$.We
denote by $\mathcal{D}(\mathfrak{H})$ the set of density operators
on $\mathfrak{H}$, i.e. operators $R\in\mathcal{L}(\mathfrak{H})$
such that $R=R^{\ast}\geq0,\mathrm{trace}_{\mathfrak{H}}(R)=1$. In
addition set $\mathcal{D}_{s}(\mathfrak{H}_{N})\coloneqq\mathcal{D}(\mathfrak{H}_{N})\cap\mathcal{L}_{s}(\mathfrak{H}_{N})$.
Let us recall the notion of marginal
\begin{defn}
\begin{flushleft}
Let $N\geq1$ and $F_{N}\in\mathcal{D}_{s}(\mathfrak{H}_{N})$. For
each $1\leq k\leq N$ we denote by $F_{N:k}\in\mathcal{D}_{s}(\mathfrak{H}_{k})$
the \textit{$k$-th marginal}, which is by definition unique element
of $\mathcal{D}_{s}(\mathfrak{H}_{k})$ such that 
\par\end{flushleft}
\begin{flushleft}
\[
\mathrm{trace}_{\mathfrak{H}_{k}}(A_{k}F_{N:k})=\mathrm{trace}_{\mathfrak{H}_{N}}((A_{k}\otimes I^{\otimes(N-k)})F_{N})
\]
\par\end{flushleft}
\begin{flushleft}
for all $A_{k}\in\mathcal{L}(\mathfrak{H}_{k})$. 
\par\end{flushleft}

\end{defn}
In the sequel we will take $V$ to be as in (\ref{eq:-1-2}). The
unknown $R_{N,\epsilon,\hbar}(t)$ in equation (\ref{eq:-5-1}) is
an element of $\mathcal{D}_{s}(\mathfrak{H}_{N})$. For brevity we
put 
\[
v_{j}^{k}\coloneqq v_{j,\epsilon,\hbar}^{k}\coloneqq-i\hbar\partial_{x_{j}^{k}}+\frac{1}{2\epsilon}(x_{j}^{\bot})^{k}.
\]

The Hamiltonian can be decomposed to the kinetic energy $\mathscr{K}_{N}\coloneqq\mathscr{K}_{N,\epsilon,\hbar}$
and the interaction part $\mathscr{V}_{N}\coloneqq\mathscr{V}_{N,\epsilon}$
which are defined by 
\[
\mathscr{K}_{N}\coloneqq\frac{1}{2}\stackrel[i=1]{N}{\sum}\underset{k=1,2}{\sum}(-i\hbar\partial_{x_{i}^{k}}+\frac{1}{2\epsilon}(x_{i}^{\bot})^{k})^{2}+\frac{1}{2}\stackrel[i=1]{N}{\sum}|x_{i}|^{2},
\]

and 

\[
\mathscr{V}_{N}\coloneqq\frac{1}{N\epsilon}\underset{p<q}{\sum}V_{pq}.
\]

The operator $\mathscr{K}_{N}$ may be viewed as an unbounded essentially
self-adjoint operator in $\mathfrak{H}_{N}$ with domain $C_{0}^{\infty}(\mathbb{R}^{2N})$.
This can be seen through the machinery of quadratic forms methods
(see section 6 of the present work or section 2.2 in {[}23{]} for
details). In section 6 we will prove the estimate 
\[
||\mathscr{V}_{N}\varphi||_{2}^{2}\leq\alpha||\mathscr{K}_{N}\varphi||_{2}^{2}+\beta||\varphi||_{2}^{2}
\]

for some $0<\alpha<1,\beta>0$ and all $\varphi\in C_{0}^{\infty}(\mathbb{R}^{2N})$.
Due to the perturbation theory developed by Kato-Rellich or Kato in
{[}11{]},{[}12{]},{[}13{]} this implies that $\mathscr{H}_{N}$
is essentially self-adjoint, which by Stone's theorem implies that
$e^{-it\mathscr{H}_{N}}$ is a unitary operator commuting with $\mathscr{H}_{N}$-
a fact which is of utter importance . We stress again that some special
care is needed in the case of the 2D Coulomb potential, since originaly
Kato proved that self-adjointness of the kinetic energy is invariant
under perturbations which belong to $L^{\infty}+L^{2}$, whereas $\log(|x|)$
obviously fails to obey this condition. The external potential $\frac{1}{2}\stackrel[i=1]{N}{\sum}|x_{i}|^{2}$
is responsible for handeling this obstcale: as clarified in section
6, it turns out that this term is helpful while establishing the self-adjointness
of $\mathscr{H}_{N}$, since it compensate the divergence of $\log(|x|)$
as $|x|\rightarrow\infty$ . This is one technical difference in comparison
to the work of {[}10{]}. A second related technical difference is
that we confine the discussion to $\mathbb{R}^{2}$ (rather than $\mathbb{T}^{2}$),
since the vector potential $x^{\bot}$ is non-periodic. Possibly,
a gauge invariance principle can be applied in order to overcome this
problem, but this direction remains to be pursued. 

It is well known that the generalized solution to equation (\ref{eq:-5-1})
is given by 
\[
R_{N,\epsilon,\hbar}(t)=e^{-\frac{it\mathscr{H}_{N}}{\hbar}}R_{N,\epsilon,\hbar}^{in}e^{\frac{it\mathscr{H}_{N}}{\hbar}}.
\]
If $R_{N}$ is a density operator then both $R_{N},\sqrt{R_{N}}$
are Hilbert--Schmidt operators on $\mathfrak{H}_{N}$, and denote
their integral kernels by $k(X_{N},Y_{N}),\kappa(X_{N},Y_{N})\in\mathfrak{H}_{N}^{\otimes2}$
respectively. Using that $R_{N}=\sqrt{R_{N}}\sqrt{R_{N}}$ we see
that 

\[
(X_{N},\Xi_{N})\mapsto k(X_{N},X_{N}+\Xi_{N})=\underset{\mathbb{R}^{dN}}{\int}\kappa(X_{N},Z_{N})\overline{\kappa}(X_{N}+\Xi_{N},Z_{N})dZ_{N}
\]

and therefore $(X_{N},\Xi_{N})\mapsto k(X_{N},X_{N}+\Xi_{N})$ defines
an element of $C(\mathbb{R}_{\Xi_{N}}^{dN};L^{1}(\mathbb{R}_{X_{N}}^{dN}))$,
which implies that $\rho[R_{N}](X_{N})\coloneqq X_{N}\mapsto k(X_{N},X_{N})\in L^{1}(\mathbb{R}^{dN})$,
and furthermore that $\rho[R_{N}]\geq0$ and $\underset{\mathbb{R}^{dN}}{\int}\rho[R_{N}](X_{N})dX_{N}=1$.
Hence $\rho[R_{N}]$ is a probability density on $\mathbb{R}^{dN}$
which is moreover symmetric provided $R_{N}$ is symmetric.We will
denote by $\rho_{N:k,\epsilon,\hbar}(t,\cdot)$ the density associated
to $R_{N:k,\epsilon,\hbar}$. Inspired by {[}10{]} we introduce the
time dependent quantity 

\[
\mathcal{E}_{N,\epsilon,\hbar}(t)\coloneqq\mathcal{E}(t)=\frac{\epsilon}{2N}\mathrm{trace}_{\mathfrak{H}_{N}}(\sqrt{R_{N}(t)}(\stackrel[j=1]{N}{\sum}\underset{k}{\sum}(v_{j}^{k}-u^{k}(t,x_{j}))^{2}+\stackrel[j=1]{N}{\sum}|x_{i}|^{2})\sqrt{R_{N}(t)})
\]

\[
+\frac{N-1}{2N}\underset{(\mathbb{R}^{2})^{2}}{\int}V_{12}\rho_{N:2}(t,x,y)dxdy+\frac{1}{2}\underset{\mathbb{R}^{2}}{\int}(V\star(\omega+\epsilon\mathfrak{U}))(\omega+\epsilon\mathfrak{U})(t,x)dx-\underset{\mathbb{R}^{2}}{\int}(V\star(\omega+\epsilon\mathfrak{U}))\rho_{N:1}(t,x)dx
\]

\begin{equation}
\coloneqq\mathcal{E}_{1}(t)+\mathcal{E}_{2}(t).\label{eq:-4-2}
\end{equation}

The quantity $\mathcal{E}(t)$ is a semi-classical rescaled version
of the distance introduced in {[}10{]} formula 1.10. A few remarks
are in order after this definition. First, let us justify that the
2 last integrals on the second line are well defined. Since $V\star\mathfrak{U}(t,\cdot)=\mathrm{div}(V\star(u\cdot\nabla u)(t,\cdot))$,
we see that $V\star\mathfrak{U}(t,\cdot)\in L^{\infty}(\mathbb{R}^{2})$
provided $u\cdot\nabla u(t,\cdot)\in L^{\infty}\cap L^{q}(\mathbb{R}^{2})$
for some $1<q<2$. In addition $\mathfrak{U}(t,\cdot)\in L^{1}(\mathbb{R}^{2})$
assuming $\nabla u(t,\cdot)\in L^{2}(\mathbb{R}^{2})$. Both of these
integrability assumptions ($\nabla u(t,\cdot)\in L^{2}(\mathbb{R}^{2})$
and $u\cdot\nabla u(t,\cdot)\in L^{\infty}\cap L^{q}(\mathbb{R}^{2})$)
are obviously implied by the assumption on $u$ stated in our main
theorem (\ref{Main Thm}) below, and explain why the product $V\star\mathfrak{U}(\omega+\epsilon\mathfrak{U})(t,\cdot)\in L^{1}(\mathbb{R}^{2})$.
The convolution of the logarithm with a summable function is known
to be a function of bounded mean oscillation, or in brief $\mathrm{BMO}$
(see 6.3, (i) in {[}22{]}). In addition, we recall that a $\mathrm{BMO}$
function is in the weighted space $L^{1}((1+|x|)^{-3})$ ({[}22{]},
1.1.4). In light of this reminder we see that in order to make sense
of the above integrals it suffices to require $(1+|x|)^{3}\omega(t,\cdot)\in L^{\infty}(\mathbb{R}^{2})$.
If we require this assumption to hold initially ($t=0$), we can ensure
it is propagated in time for compactly supported initial data. We
elaborate on this last claim in appendix A. As for the first integral
in the definition of $\mathcal{E}(t)$ as well as $\mathcal{E}_{1}(t)$,
we shall impose the following technical assumption which will be needed
in order to justify the fact that both are well defined for all times 

\textbf{Assumption (A).} 
\[
\mathrm{trace}_{\mathfrak{H}_{N}}(\sqrt{R_{N,\epsilon,\hbar}^{in}}(I+\mathscr{K}_{N})^{2}\sqrt{R_{N,\epsilon,\hbar}^{in}})<\infty.
\]
. 

The usefulness of assumption A will become clear while establishing
conservation of energy, as stated in the following 
\begin{lem}
\begin{flushleft}
\textup{\label{ENERGY CON} }Let $R_{N,\epsilon,\hbar}^{in}\in\mathcal{D}_{s}(\mathfrak{H}_{N})$
satisfy assumption (A). Let $R_{N}(t)\coloneqq e^{-\frac{it\mathscr{H}_{N}}{\hbar}}R_{N,\epsilon,\hbar}^{in}e^{\frac{it\mathscr{H}_{N}}{\hbar}}$.
Then 
\par\end{flushleft}
\begin{flushleft}
\[
\frac{\epsilon}{2N}\mathrm{trace}_{\mathfrak{H}_{N}}(\sqrt{R_{N,\epsilon,\hbar}^{in}}(\stackrel[j=1]{N}{\sum}\underset{k}{\sum}(v_{j}^{k})^{2}+\stackrel[i=1]{N}{\sum}|x_{i}|^{2})\sqrt{R_{N,\epsilon,\hbar}^{in}})
\]
\par\end{flushleft}
\[
+\frac{N-1}{2N}\underset{(\mathbb{R}^{2})^{2}}{\int}V_{12}\rho_{N:2}(0,x,y)dxdy
\]

\[
=\frac{\epsilon}{2N}\mathrm{trace}_{\mathfrak{H}_{N}}(\sqrt{R_{N}(t)}(\stackrel[j=1]{N}{\sum}\underset{k}{\sum}(v_{j}^{k})^{2}+\stackrel[i=1]{N}{\sum}|x_{i}|^{2})\sqrt{R_{N}(t)})
\]
\begin{flushleft}
\begin{equation}
+\frac{N-1}{2N}\underset{(\mathbb{R}^{2})^{2}}{\int}V_{12}\rho_{N:2}(t,x,y)dxdy.\label{eq:-6-1}
\end{equation}
\par\end{flushleft}

\end{lem}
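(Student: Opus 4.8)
The plan is to recognise the two–line expression on each side of \eqref{eq:-6-1} as $\frac{\epsilon}{N}\,\mathrm{trace}_{\mathfrak{H}_{N}}(\mathscr{H}_{N}R_{N}(t))$ and $\frac{\epsilon}{N}\,\mathrm{trace}_{\mathfrak{H}_{N}}(\mathscr{H}_{N}R_{N,\epsilon,\hbar}^{in})$ respectively, and then to deduce the statement from the invariance of the energy under the von Neumann flow. Concretely, the identity claimed is just $\mathrm{trace}_{\mathfrak{H}_{N}}(\mathscr{H}_{N}R_{N}(t))=\mathrm{trace}_{\mathfrak{H}_{N}}(\mathscr{H}_{N}R_{N,\epsilon,\hbar}^{in})$, rewritten in marginal language.

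\textbf{Algebraic reduction.} Write $\mathscr{H}_{N}=\mathscr{K}_{N}+\mathscr{V}_{N}$. Using $R_{N}(t)=\sqrt{R_{N}(t)}\sqrt{R_{N}(t)}$, self-adjointness of $\sqrt{R_{N}(t)}$ and cyclicity of the trace (legitimate once one knows, from Step 3 below, that $(I+\mathscr{K}_{N})\sqrt{R_{N}(t)}\in\mathcal{L}^{2}(\mathfrak{H}_{N})$), one gets
\[
\mathrm{trace}_{\mathfrak{H}_{N}}(\mathscr{K}_{N}R_{N}(t))=\mathrm{trace}_{\mathfrak{H}_{N}}\big(\sqrt{R_{N}(t)}\,\mathscr{K}_{N}\sqrt{R_{N}(t)}\big)=\tfrac12\,\mathrm{trace}_{\mathfrak{H}_{N}}\Big(\sqrt{R_{N}(t)}\big(\textstyle\sum_{j}\sum_{k}(v_{j}^{k})^{2}+\sum_{i}|x_{i}|^{2}\big)\sqrt{R_{N}(t)}\Big).
\]
For the interaction term one uses that $R_{N}(t)$ is symmetric — a property preserved by the flow because each $U_{\sigma}$ commutes with $\mathscr{H}_{N}$ — together with the defining property of the second marginal: for every pair $p<q$,
\[
\mathrm{trace}_{\mathfrak{H}_{N}}(V_{pq}R_{N}(t))=\mathrm{trace}_{\mathfrak{H}_{2}}(V_{12}R_{N:2}(t))=\int_{(\mathbb{R}^{2})^{2}}V_{12}\,\rho_{N:2}(t,x,y)\,dx\,dy,
\]
so summing the $\binom{N}{2}$ equal contributions yields $\mathrm{trace}_{\mathfrak{H}_{N}}(\mathscr{V}_{N}R_{N}(t))=\frac{N-1}{2\epsilon}\int V_{12}\rho_{N:2}(t,x,y)\,dx\,dy$. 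Multiplying by $\epsilon/N$ and adding reproduces exactly the two-line expression in \eqref{eq:-6-1}; the same computation at $t=0$ gives the left-hand side.

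\textbf{Invariance and the technical core.} Since the relative-boundedness estimate $\|\mathscr{V}_{N}\varphi\|_{2}^{2}\le\alpha\|\mathscr{K}_{N}\varphi\|_{2}^{2}+\beta\|\varphi\|_{2}^{2}$ with $\alpha<1$ makes $\mathscr{H}_{N}$ essentially self-adjoint on $C_{0}^{\infty}(\mathbb{R}^{2N})$ (Kato--Rellich, see Section 6), Stone's theorem produces the unitary group $U(t)=e^{-it\mathscr{H}_{N}/\hbar}$ commuting with $\mathscr{H}_{N}$, whence $U(t)^{\ast}\mathscr{H}_{N}U(t)=\mathscr{H}_{N}$ on $D(\mathscr{H}_{N})$; then
\[
\mathrm{trace}_{\mathfrak{H}_{N}}(\mathscr{H}_{N}R_{N}(t))=\mathrm{trace}_{\mathfrak{H}_{N}}\big(\mathscr{H}_{N}U(t)R_{N,\epsilon,\hbar}^{in}U(t)^{\ast}\big)=\mathrm{trace}_{\mathfrak{H}_{N}}\big(U(t)^{\ast}\mathscr{H}_{N}U(t)\,R_{N,\epsilon,\hbar}^{in}\big)=\mathrm{trace}_{\mathfrak{H}_{N}}(\mathscr{H}_{N}R_{N,\epsilon,\hbar}^{in}),
\]
which is the assertion. (Alternatively one differentiates $t\mapsto\mathrm{trace}_{\mathfrak{H}_{N}}(\mathscr{H}_{N}R_{N}(t))$, uses $i\hbar\partial_{t}R_{N}=[\mathscr{H}_{N},R_{N}]$ and cyclicity to see the derivative vanishes.) The genuine work — and the main obstacle — is that all the trace manipulations above involve unbounded operators and must be justified. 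Here Assumption (A) is essential: it says $(I+\mathscr{K}_{N})\sqrt{R_{N,\epsilon,\hbar}^{in}}\in\mathcal{L}^{2}(\mathfrak{H}_{N})$; since $\mathscr{V}_{N}(I+\mathscr{K}_{N})^{-1}$ is bounded (again by the relative bound), $\mathscr{H}_{N}(I+\mathscr{K}_{N})^{-1}$ is bounded, so $\mathscr{H}_{N}\sqrt{R_{N,\epsilon,\hbar}^{in}}\in\mathcal{L}^{2}(\mathfrak{H}_{N})$ and the traces of $\mathscr{H}_{N}R_{N,\epsilon,\hbar}^{in}$, $\mathscr{K}_{N}R_{N,\epsilon,\hbar}^{in}$ and $\mathscr{V}_{N}R_{N,\epsilon,\hbar}^{in}$ are finite and the cyclic rearrangements valid at $t=0$. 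To propagate this in time, note that $U(t)$ commutes with $\mathscr{H}_{N}$ and that $D(\mathscr{H}_{N})=D(\mathscr{K}_{N})$ with equivalent graph norms, so $U(t)$ maps $D(\mathscr{K}_{N})$ into itself with $t$-uniformly bounded graph norm; writing $\sqrt{R_{N}(t)}=U(t)\sqrt{R_{N,\epsilon,\hbar}^{in}}U(t)^{\ast}$ and inserting $(I+\mathscr{K}_{N})^{-1}(I+\mathscr{K}_{N})$ gives $(I+\mathscr{K}_{N})\sqrt{R_{N}(t)}\in\mathcal{L}^{2}(\mathfrak{H}_{N})$ with Hilbert--Schmidt norm bounded uniformly in $t$. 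This legitimises every trace identity above for arbitrary $t$ and closes the argument; I expect this propagation-of-regularity bookkeeping, rather than the conceptually transparent invariance step, to be where essentially all the effort goes.
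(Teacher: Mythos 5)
Your proof is correct and takes essentially the same route as the paper's: recognise the two lines of \eqref{eq:-6-1} as $\frac{\epsilon}{N}\mathrm{trace}_{\mathfrak{H}_{N}}(\sqrt{R_{N}(t)}\,\mathscr{H}_{N}\sqrt{R_{N}(t)})$, deduce constancy in $t$ from unitary invariance of the von Neumann flow, and use Assumption (A) together with the relative bound of $\mathscr{V}_{N}$ against $\mathscr{K}_{N}$ to make the trace manipulations with unbounded operators legitimate. The only cosmetic difference is how the Hilbert--Schmidt property of $\mathscr{H}_{N}\sqrt{R_{N}(t)}$ is propagated in time: you argue via uniform equivalence of the $\mathscr{H}_{N}$- and $\mathscr{K}_{N}$-graph norms on $D(\mathscr{H}_{N})=D(\mathscr{K}_{N})$, while the paper expands $\sqrt{R_{N}(t)}$ in the eigenbasis of $R_{N}^{in}$ and invokes Fact~\ref{footnote}; both are valid.
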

We remark that the proof of energy conservation in {[}7{]} does not
consider the presence of a magnetic field. The proof of lemma (\ref{ENERGY CON})
is postponed to section 6. In particular the conservation of energy
includes the (non obvious at first sight) statement that both terms
on the right hand side of (\ref{eq:-6-1}) are separately well defined
for all times $t\in[0,T]$\textbf{.} Assumption (A) also implies in
particular 
\[
\mathrm{trace}_{\mathfrak{H}_{N}}(\sqrt{R_{N,\epsilon,\hbar}^{in}}\left(\stackrel[i=1]{N}{\sum}\underset{k}{\sum}(-i\hbar\partial_{x_{i}}+\frac{1}{2\epsilon}(x_{i}^{\bot})^{k})^{2}\right)\sqrt{R_{N,\epsilon,\hbar}^{in}})<\infty,
\]
which in turn makes the Riesz representation theorem available, thereby
allowing to define a notion of a current, which is the quantum analog
of the first moment of the solution of Vlasov-Poisson system. Denote
by $\lor$ the anticommutator. 
\begin{defn}
\begin{onehalfspace}
\begin{flushleft}
Let $R\in\mathcal{L}^{1}(\mathfrak{H})$ such that $R=R^{\ast}\geq0$
and $\mathrm{trace}_{\mathfrak{H}}(\sqrt{R}((-i\hbar\partial_{x^{k}}+\frac{1}{2\epsilon}(x^{\bot})^{k})^{2})\sqrt{R})<\infty$.
For each $a\in C_{b}(\mathbb{R}^{2})$ and $k=1,2$ we denote by $J_{N:1}^{k}$
the unique signed Radon measure on $\mathbb{R}^{2}$ such that 
\par\end{flushleft}
\begin{flushleft}
\[
\underset{\mathbb{R}^{2}}{\int}a(x)J_{N:1}^{k}=\frac{1}{2}\mathrm{trace}_{\mathfrak{H}}(a\lor((-i\hbar\partial_{x}+\frac{1}{2\epsilon}(x^{\bot})^{k}))R)
\]
\par\end{flushleft}
\begin{flushleft}
The \textit{current} of $R$ is the signed measure valued vector field
$(J_{N:1}^{1},J_{N:1}^{2})$. 
\par\end{flushleft}
\end{onehalfspace}

\end{defn}
We now state Serfaty's remarkable functional inequality, whose considerable
importance while obtaining a Gronwall estimate on the functional $\mathcal{E}(t)$
has already been mentioned. For $X_{N}=(x_{1},...,x_{N})\in(\mathbb{R}^{2})^{N}$
denote $\mu_{X_{N}}\coloneqq\frac{1}{N}\stackrel[i=1]{N}{\sum}\delta_{x_{i}}$. 
\begin{thm}
\begin{flushleft}
\textup{\label{Serfaty } ({[}19{]}, Proposition 1.1)} There is a
constant $C>0$ with the following property. Assume that $\mu\in L^{\infty}(\mathbb{R}{}^{2})$
is a probability density. Then for any $X_{N}\in(\mathbb{R}^{2})^{N}$
s.t. $\forall i\neq j:x_{i}\neq x_{j}$ and any Lipschitz map $\psi:\mathbb{R}^{2}\rightarrow\mathbb{R}^{2}$
we have 
\[
\left|\underset{(\mathbb{R}^{2})^{2}-\triangle}{\int}(\psi(x)-\psi(y))\cdot\nabla V(x-y)(\mu_{X_{N}}-\mu)^{\otimes2}(x,y)dxdy\right|
\]
\par\end{flushleft}
\[
\leq C||\nabla\psi||_{\infty}\left(\mathfrak{f}_{N}(X_{N},\mu)+C(1+||\mu||_{\infty})N^{-\frac{1}{3}}+\frac{\log N}{N}\right)+2C||\psi||_{W^{1,\infty}}(1+||\mu||_{\infty})N^{-\frac{1}{2}},
\]
\begin{flushleft}
where 
\[
\mathfrak{f}_{N}(X_{N},\mu)\coloneqq\underset{(\mathbb{R}^{2})^{2}-\triangle}{\int}V(x-y)(\mu_{X_{N}}-\mu)^{\otimes2}(x,y)dxdy.
\]
 
\par\end{flushleft}

\end{thm}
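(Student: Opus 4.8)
The plan is to recognise the quantity on the left-hand side as the pairing of $\nabla\psi$ with the electric stress-energy tensor of the modulated charge $\nu\coloneqq\mu_{X_N}-\mu$, and then to control that pairing by the modulated energy $\mathfrak{f}_N$, at the cost of error terms produced by a renormalisation at a small scale. Since $V$ is, up to the positive constant $\tfrac{1}{2\pi}$, the fundamental solution of $-\Delta$ on $\mathbb{R}^2$, one sets $h\coloneqq V\star\nu$, so that $-\Delta h=\nu$ in $\mathcal{D}'(\mathbb{R}^2)$; a short computation using that $\nabla V$ is odd, followed by two integrations by parts against $-\Delta h=\nu$, would give
\[
\underset{(\mathbb{R}^2)^2-\triangle}{\int}(\psi(x)-\psi(y))\cdot\nabla V(x-y)\,\nu^{\otimes2}(x,y)\,dxdy=\underset{\mathbb{R}^2}{\int}\sum_{k,l}\partial_k\psi^l\Big(2\,\partial_k h\,\partial_l h-\delta_{kl}|\nabla h|^2\Big)dx,
\]
together with the companion identity $\mathfrak{f}_N(X_N,\mu)=\int_{\mathbb{R}^2}|\nabla h|^2$. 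Both identities are only formal, however: $\mu_{X_N}$ is atomic and the self-energy of a Dirac mass against the logarithmic kernel is infinite, which is exactly why the diagonal $\triangle$ has been removed from $\mathfrak{f}_N$; the whole proof amounts to carrying out the same renormalisation inside the stress-energy identity while tracking the errors.

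The renormalisation is implemented by a truncation. For each $i$ one picks a radius $\eta_i\in(0,1]$, replaces $\delta_{x_i}$ by the uniform probability measure $\delta_{x_i}^{(\eta_i)}$ on the circle $\partial B(x_i,\eta_i)$, and lets $h_\eta$ be the potential $V\star\big(\tfrac{1}{N}\sum_i\delta_{x_i}^{(\eta_i)}-\mu\big)$; explicitly $h_\eta=h-\tfrac{1}{N}\sum_i f_{\eta_i}(\cdot-x_i)$, where $f_\eta(y)=\tfrac{1}{2\pi}\log\tfrac{\eta}{|y|}$ for $|y|<\eta$ and $f_\eta(y)=0$ otherwise, so that $\nabla h_\eta\in L^2$ and the identity above can be justified with $h_\eta$ in place of $h$, up to surface terms on the circles $\partial B(x_i,\eta_i)$. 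One then estimates: (a) the self-energies of the smeared charges, which contribute $\tfrac{C}{N}\log\tfrac{1}{\min_i\eta_i}$ and relate $\int|\nabla h_\eta|^2$ to $\mathfrak{f}_N$ up to this amount, using also the a priori lower bound $\mathfrak{f}_N(X_N,\mu)\gtrsim-\tfrac{\log N}{N}$ (which is what makes the bracket on the right of the theorem essentially nonnegative); (b) the cross terms between the smeared charges and the background $\mu$, bounded by $\|\mu\|_\infty\max_i\eta_i^2$ via $\int_{B(0,\eta)}f_\eta\lesssim\eta^2$; (c) the commutator terms localised in the balls $B(x_i,\eta_i)$ and the overlaps between distinct such balls, bounded by $\|\nabla\psi\|_\infty$ times the number of pairs at distance $\lesssim\eta_i+\eta_j$; and (d) the remaining lower-order contributions, in which $\psi$ rather than its gradient is exposed, bounded by $\|\psi\|_{W^{1,\infty}}(1+\|\mu\|_\infty)$ with rate $N^{-1/2}$. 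Bounding the stress-energy term for $h_\eta$ by $C\|\nabla\psi\|_\infty\int|\nabla h_\eta|^2$, inserting (a)--(c), and then choosing the $\eta_i$ so as to balance the $\tfrac{1}{N}\log\tfrac{1}{\eta}$ self-energy cost against the $\|\mu\|_\infty\eta^2$ and ball-overlap errors yields the asserted inequality, the exponents $N^{-1/3}$, $N^{-1/2}$ and the term $\tfrac{\log N}{N}$ being the outcome of this optimisation.

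The main obstacle is precisely item (c) together with the lower bound used in (a): the points $x_i$ may cluster at scales far below the typical interparticle spacing $N^{-1/2}$, so that no single truncation scale can keep $\int|\nabla h_\eta|^2$ simultaneously finite and comparable to $\mathfrak{f}_N$. Serfaty's resolution --- the technical heart of [19] --- is to tie each $\eta_i$ to the nearest-neighbour distance $r_i=\min_{j\neq i}|x_i-x_j|$ and to prove that the resulting local-energy excess is bounded below uniformly over all admissible configurations; that lower bound is itself obtained from a stress-energy representation tested against a suitably chosen vector field, so the argument is in effect a bootstrap. Granting it, the remaining ingredients --- the $L^\infty$ control of $\mu$ through the smeared potentials, the integrations by parts, and the final choice of exponents --- are routine.
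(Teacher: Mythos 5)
The paper does not prove this statement: the theorem is cited directly from [19], Proposition 1.1, and invoked as a black box (the remark immediately following it even points to sharper versions in [20] and [17]). There is therefore no proof in the paper against which to compare yours; your sketch is a stand-alone reconstruction of the argument in [19]. As such it is essentially accurate: the rewriting of the left-hand side as the pairing of $\nabla\psi$ with the stress-energy tensor $2\partial_k h\,\partial_l h-\delta_{kl}|\nabla h|^2$ of the modulated potential $h=V\star(\mu_{X_N}-\mu)$, the renormalisation by smearing each Dirac mass onto a circle of radius $\eta_i$, the bookkeeping of self-energy and cross terms that relates $\int_{\mathbb{R}^2}|\nabla h_\eta|^2$ to $\mathfrak f_N$, the choice of $\eta_i$ tied to the nearest-neighbour distance $r_i$ (and capped at scale $N^{-1/2}$), and the final optimisation over truncation scales all correspond to Serfaty's proof. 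One inaccuracy worth flagging: the a priori lower bound $\mathfrak f_N(X_N,\mu)+\frac{1+\|\mu\|_\infty}{N}+\frac{\log N}{N}\geq 0$, which the paper records separately as Proposition 3.3 of [19], is not obtained by testing the stress-energy representation against a chosen vector field and the argument is not a bootstrap; it follows directly from the non-negativity of $\int_{\mathbb{R}^2}|\nabla h_\eta|^2$ after smearing, the $\log N/N$ term being the cost of the smeared self-energies. Beyond that, your account is a faithful outline, though it remains a sketch: the surface contributions on each $\partial B(x_i,\eta_i)$, the counting of overlapping balls, and the optimisation producing the exact exponents $N^{-1/3}$, $N^{-1/2}$ and the $\|\psi\|_{W^{1,\infty}}$ prefactor would all need to be carried out in detail to turn it into a proof.
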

\begin{rem}
After completing this work, Matthew Rosenzweig drew our attention
to corollary (4.3) in {[}20{]} which provides an improved (and sharp)
version of theorem \ref{Serfaty }, as well as to proposition (3.9)
in {[}17{]} which provides a simpler proof of this improvement. 
\end{rem}
Originally, Serfaty's inequality was used in order to derive the pressureless
Euler system from Newton's second order system of ODEs with monokinetic
initial data. This is a classical mechanics mean field limit type
result. The following lemma serves as a bridge between theorem (\ref{Serfaty })
and the quantum settings which are relevant for us 
\begin{lem}
\begin{flushleft}
\label{Averaging } \textup{({[}7{]}, Lemma 3.5)} Set $\mu(t,\cdot)\coloneqq\omega(t,\cdot)+\epsilon\mathfrak{U}(t,\cdot)$
and 
\[
\mathfrak{F}[R_{N,\epsilon,\hbar}(t),\mu(t,\cdot)]
\]
\par\end{flushleft}
\[
\coloneqq\underset{(\mathbb{R}^{2})^{2}}{\int}V(x-y)\left(\frac{N-1}{N}\rho_{N:2}(t,x,y)+\mu(t,x)\mu(t,y)-2\rho_{N:1}(t,x)\mu(t,y)\right)dxdy
\]

and
\begin{flushleft}
\[
\mathfrak{F}'[R_{N,\epsilon,\hbar}(t),\mu(t,\cdot),u(t,\cdot)]
\]
\par\end{flushleft}
\[
\coloneqq\underset{(\mathbb{R}^{2})^{2}}{\int}(u(t,x)-u(t,y))\nabla V(x-y)
\]

\[
\times\left(\frac{N-1}{N}\rho_{N:2}(t,x,y)+\mu(t,x)\mu(t,y)-2\rho_{N:1}(t,x)\mu(t,y)\right)dxdy.
\]
\begin{flushleft}
Then 
\par\end{flushleft}
\[
\mathfrak{F}[R_{N,\epsilon,\hbar}(t),\mu(t,\cdot)]=\underset{(\mathbb{R}^{2})^{N}}{\int}\mathfrak{f}(X_{N},\mu(t,\cdot))\rho_{\epsilon,\hbar,N}(X_{N},t)dX_{N}
\]

and
\begin{flushleft}
\[
\mathfrak{F}'[R_{N,\epsilon,\hbar},\mu,u]=\underset{(\mathbb{R}^{2})^{N}}{\int}\mathfrak{f}'(X_{N},\mu(t,\cdot))\rho_{\epsilon,\hbar,N}(X_{N}t)dX_{N},
\]
\par\end{flushleft}
\begin{flushleft}
where 
\[
\mathfrak{f}(X_{N},\mu(t,\cdot))\coloneqq\underset{(\mathbb{R}^{2})^{2}-\triangle}{\int}V(x-y)(\mu_{X_{N}}-\mu(t,\cdot))^{\otimes2}(x,y)dxdy,
\]
\par\end{flushleft}
\begin{onehalfspace}
\begin{flushleft}
\[
\mathfrak{f}'(X_{N},\mu(t,\cdot),u)\coloneqq\underset{(\mathbb{R}^{2})^{2}-\triangle}{\int}(u(t,x)-u(t,y))\nabla V(x-y)(\mu_{X_{N}}-\mu(t,\cdot))^{\otimes2}(x,y)dxdy.
\]
\par\end{flushleft}
\end{onehalfspace}

\end{lem}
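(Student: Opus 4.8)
The plan is to obtain both identities from a single computation: set $\phi(x,y):=V(x-y)$ when proving the $\mathfrak{f}$/$\mathfrak{F}$ identity and $\phi(x,y):=(u(t,x)-u(t,y))\nabla V(x-y)$ when proving the $\mathfrak{f}'$/$\mathfrak{F}'$ identity. In either case $\phi$ is symmetric, $\phi(x,y)=\phi(y,x)$ --- for the second kernel because $\nabla V$ is odd. The only properties of $\rho_{\epsilon,\hbar,N}(\cdot,t)$ that enter are that it is a symmetric probability density on $(\mathbb{R}^{2})^{N}$ and that, specializing the defining relation of the $k$-th marginal to multiplication operators, $\rho_{N:k}(t,x_{1},\dots,x_{k})=\int_{(\mathbb{R}^{2})^{N-k}}\rho_{\epsilon,\hbar,N}(X_{N},t)\,dx_{k+1}\cdots dx_{N}$ for $k=1,2$. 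This is the planar, magnetic counterpart of Lemma 3.5 of [7], and --- the magnetic field and the precise form of $\mu(t,\cdot)$ being irrelevant to the algebra --- the argument is the same.

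The core of the proof is an expansion. Write $(\mu_{X_{N}}-\mu(t,\cdot))^{\otimes2}=\mu_{X_{N}}\otimes\mu_{X_{N}}-\mu_{X_{N}}\otimes\mu-\mu\otimes\mu_{X_{N}}+\mu\otimes\mu$, so that $\mathfrak{f}(X_{N},\mu(t,\cdot))$ (resp.\ $\mathfrak{f}'$) is a sum of four integrals of $\phi$ over $(\mathbb{R}^{2})^{2}-\triangle$. For the first one: $\mu_{X_{N}}\otimes\mu_{X_{N}}=\frac{1}{N^{2}}\sum_{i,j}\delta_{x_{i}}\otimes\delta_{x_{j}}$, the terms $i=j$ are carried by $\triangle$, so the integral equals $\frac{1}{N^{2}}\sum_{i\neq j}\phi(x_{i},x_{j})$, and integrating against $\rho_{\epsilon,\hbar,N}(X_{N},t)\,dX_{N}$ (there are $N(N-1)$ equal ordered pairs, by symmetry) gives $\frac{N-1}{N}\int\phi(x,y)\rho_{N:2}(t,x,y)\,dx\,dy$. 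For the two mixed terms: since $\mu(t,\cdot)=\omega(t,\cdot)+\epsilon\mathfrak{U}(t,\cdot)$ is absolutely continuous with respect to Lebesgue measure, $\triangle$ is null both for $\mu_{X_{N}}\otimes\mu$ and for $\mu\otimes\mu_{X_{N}}$, so the restriction to the complement of $\triangle$ can be dropped; the $-\mu_{X_{N}}\otimes\mu$ term is then $-\frac{1}{N}\sum_{i}\int\phi(x_{i},y)\mu(t,y)\,dy$, which integrates against $\rho_{\epsilon,\hbar,N}$ to $-\int\phi(x,y)\rho_{N:1}(t,x)\mu(t,y)\,dx\,dy$, and $-\mu\otimes\mu_{X_{N}}$ yields the same expression by symmetry of $\phi$, for a combined contribution $-2\int\phi\,\rho_{N:1}\mu$. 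Finally the $\mu\otimes\mu$ term, $\triangle$ again being null, equals $\int\phi(x,y)\mu(t,x)\mu(t,y)\,dx\,dy$, which is independent of $X_{N}$ and so survives unchanged after integration against the probability density $\rho_{\epsilon,\hbar,N}$. Summing the three contributions gives $\frac{N-1}{N}\int\phi\,\rho_{N:2}+\int\phi\,\mu\mu-2\int\phi\,\rho_{N:1}\mu$, which is precisely $\mathfrak{F}[R_{N,\epsilon,\hbar}(t),\mu(t,\cdot)]$ (resp.\ $\mathfrak{F}'[R_{N,\epsilon,\hbar},\mu,u]$).

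The only real work --- hence the main obstacle --- is to make the three manipulations just used legitimate: splitting the integral into four pieces, discarding the diagonal, and the Fubini interchange of $dX_{N}$ with $dx\,dy$, all of which require that each scalar integral above be absolutely convergent; once that is in hand, the identity is the bookkeeping above. For the $\mathfrak{f}'$/$\mathfrak{F}'$ identity this is the easier case: under the regularity assumed on $u$ in Theorem \ref{Main Thm} the kernel $(u(t,x)-u(t,y))\nabla V(x-y)$ is globally bounded (near $\triangle$ by a multiple of $\|\nabla u(t,\cdot)\|_{\infty}$ since $|\nabla V(z)|=\frac{1}{2\pi|z|}$, and away from $\triangle$ by a multiple of $\|u(t,\cdot)\|_{\infty}$), so absolute convergence follows at once from $\rho_{N:2},\rho_{N:1},\mu(t,\cdot)\in L^{1}$ and the fact that $\rho_{\epsilon,\hbar,N}(\cdot,t)$ is a probability density. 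For the $\mathfrak{f}$/$\mathfrak{F}$ identity the kernel is $V$, locally integrable with logarithmic growth, and here the needed absolute convergence is exactly what the standing assumptions furnish --- indeed, up to the factor $2$, the three integral terms that appear are those on the second line of the definition (\ref{eq:-4-2}) of $\mathcal{E}(t)$, whose finiteness for all $t$ is the content of the integrability discussion following that definition together with Lemma \ref{ENERGY CON}: the local logarithmic singularity is absorbed by the regularity of the densities coming from Assumption (A) (finiteness of $\mathrm{trace}_{\mathfrak{H}_{N}}(\sqrt{R_{N}}\mathscr{K}_{N}^{2}\sqrt{R_{N}})$), and the logarithmic growth at infinity by the second moment controlled by the confining potential in $\mathscr{K}_{N}$ together with the decay $(1+|x|)^{3}\omega(t,\cdot)\in L^{\infty}$.
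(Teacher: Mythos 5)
The paper does not actually give a proof of this lemma: it is stated as a citation to Lemma 3.5 of reference [7] (Golse--Paul), and no argument appears in the present text. Your proof is correct and is precisely the standard computation that that reference uses: expand $(\mu_{X_{N}}-\mu)^{\otimes2}$ into four tensors, drop the diagonal (automatic for the cross and $\mu\otimes\mu$ terms since $\mu(t,\cdot)$ is absolutely continuous, and built in for $\mu_{X_{N}}\otimes\mu_{X_{N}}$ by the restriction to $(\mathbb{R}^{2})^{2}-\triangle$), integrate against the symmetric density $\rho_{\epsilon,\hbar,N}$ and use the marginal identities $\frac{1}{N^{2}}\sum_{i\neq j}\mapsto\frac{N-1}{N}\rho_{N:2}$, $\frac{1}{N}\sum_{i}\mapsto\rho_{N:1}$. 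The symmetry observation $\phi(x,y)=\phi(y,x)$ (trivial for $V$, and for $(u(x)-u(y))\cdot\nabla V(x-y)$ because both factors are odd) is exactly what is needed to fold the two mixed terms into $-2\int\phi\,\rho_{N:1}\mu$. Your integrability discussion is also adequate: boundedness of the $\mathfrak{f}'$ kernel under the hypotheses of Theorem \ref{Main Thm}, and finiteness of the $V$-kernel integrals via the BMO/weighted-$L^{1}$ and Assumption (A) remarks surrounding the definition of $\mathcal{E}(t)$. Nothing to add.
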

\begin{onehalfspace}

\end{onehalfspace}Our main theorem is 
\begin{thm}
\label{Main Thm} Let $(\omega,u)\in C^{1}([0,T];C^{0,\alpha}(\mathbb{R}^{2}))\times C^{1}([0,T];C_{\mathrm{loc}}^{1,\alpha}(\mathbb{R}^{2})\cap L^{\infty}(\mathbb{R}^{2}))$
with $\alpha\in(0,1)$ be a solution to (\ref{E}), such that 

1. $\omega(t,\cdot)$ is compactly supported with $\omega\geq0$ and
$||\omega(t,\cdot)||_{1}=1$ and 

2. For all $1<p\leq\infty:$ $\nabla u\in L^{\infty}([0,T];L^{p}(\mathbb{R}^{2}))$.

Let $R_{N,\epsilon,\hbar}(t)\in\mathcal{D}_{s}(\mathfrak{H}_{N})$
be a solution to equation (\ref{eq:-5-1}) such that $R_{N,\epsilon,\hbar}^{in}$
verifies assumption (A). Then $\mathcal{E}_{N,\epsilon,\hbar}(t)\rightarrow0$
as $\frac{1}{N}+\epsilon+\hbar\rightarrow0$, provided $\mathcal{E}_{N,\epsilon,\hbar}(0)\rightarrow0$
as $\frac{1}{N}+\epsilon+\hbar\rightarrow0$. Furthermore, $\rho_{N:1,\epsilon,\hbar}\underset{\frac{1}{N}+\epsilon+\hbar\rightarrow0}{\rightarrow}\omega$
weakly in the sense of measures. 
\end{thm}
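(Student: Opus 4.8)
The plan is to establish a Gr\"onwall-type differential inequality for the modulated energy $\mathcal{E}(t)=\mathcal{E}_1(t)+\mathcal{E}_2(t)$ and then to close it. First I would compute $\frac{d}{dt}\mathcal{E}(t)$ along the solution $R_{N,\epsilon,\hbar}(t)$ of the von Neumann equation, using the fact that $e^{-it\mathscr{H}_N/\hbar}$ is unitary so that cyclicity of the trace applies. The time derivative of $\mathcal{E}_1(t)$ produces, via the commutator $[\mathscr{H}_N,\cdot]$, several families of terms: a transport term involving $u\cdot\nabla u+\nabla P$ (which vanishes by the Euler equation \eqref{eq:E2}), terms where the differential operator $v_j^k$ meets the multiplication operator $u^k(t,x_j)$ and produces a commutator equal to $-i\hbar\,\partial_{x_j^\ell}u^k$ (these are the genuinely quantum error terms, of size $O(\hbar\|\nabla u\|_\infty)$), the magnetic contributions coming from $\xi^\perp$, and the interaction terms coming from $\mathscr{V}_N$. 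The key algebraic identity, exactly as in the classical argument of [10], is that after rearranging, the interaction terms from $\dot{\mathcal{E}_1}$ combine with $\dot{\mathcal{E}_2}$ so that everything is expressible through $\mathfrak{F}'[R_{N,\epsilon,\hbar}(t),\mu,u]$ of Lemma \ref{Averaging }, where $\mu=\omega+\epsilon\mathfrak{U}$; the role of the extra $\epsilon\mathfrak{U}$ in the definition of $\mu$ is precisely to absorb the pressure term $-\nabla P$ (recall $-\Delta P=\mathfrak{U}$, so $\nabla V\star\mathfrak{U}$ reconstructs $\nabla P$). Here I would invoke Assumption (A) together with the conservation of energy (Lemma \ref{ENERGY CON}) to guarantee that all the traces appearing are finite for every $t\in[0,T]$, so the formal manipulations are legitimate.

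Second, I would estimate the resulting right-hand side. The transport/commutator errors are bounded by $C(\|\nabla u\|_\infty,\|u\|_{W^{1,\infty}})\,(\mathcal{E}_1(t)+\hbar+\epsilon)$ after completing the square in $v_j^k-u^k$ and using that the confining potential $\tfrac12\sum|x_i|^2$ inside $\mathcal{E}_1$ controls the position-dependent error terms created by the non-periodic magnetic potential $\tfrac{1}{2\epsilon}x^\perp$. The interaction/pressure terms are handled by first using Lemma \ref{Averaging } to rewrite $\mathfrak{F}'$ as an average over $X_N$ of $\mathfrak{f}'(X_N,\mu(t,\cdot),u)$ against the $N$-particle density $\rho_{\epsilon,\hbar,N}$, and then applying Serfaty's inequality (Theorem \ref{Serfaty }) pointwise in $X_N$: this bounds $|\mathfrak{f}'|$ by $C\|\nabla u\|_\infty(\mathfrak{f}_N(X_N,\mu)+C(1+\|\mu\|_\infty)N^{-1/3}+\tfrac{\log N}{N})+2C\|u\|_{W^{1,\infty}}(1+\|\mu\|_\infty)N^{-1/2}$. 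Averaging and using Lemma \ref{Averaging } again in the other direction, $\int\mathfrak{f}_N\rho_{\epsilon,\hbar,N}=\mathfrak{F}[R_{N,\epsilon,\hbar}(t),\mu]$, which is exactly $2\mathcal{E}_2(t)$ up to the lower-order $\epsilon\mathfrak{U}$ corrections. One must check that $\|\mu(t,\cdot)\|_\infty=\|\omega(t,\cdot)+\epsilon\mathfrak{U}(t,\cdot)\|_\infty$ is bounded uniformly on $[0,T]$ for $\epsilon$ small, which follows from the regularity assumptions on $(\omega,u)$ in Theorem \ref{Main Thm} (in particular $\nabla u\in L^\infty_t L^p_x$ for all $p$, giving $\mathfrak{U}\in L^\infty_t L^\infty_x$ by a standard singular-integral/interpolation bound). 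Collecting everything yields
\[
\frac{d}{dt}\mathcal{E}(t)\le C\Big(\mathcal{E}(t)+\hbar+\epsilon+N^{-1/3}+\frac{\log N}{N}\Big)
\]
with $C=C(T,\|\omega\|,\|u\|)$ independent of $N,\epsilon,\hbar$.

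Third, Gr\"onwall's lemma gives $\mathcal{E}(t)\le e^{CT}\big(\mathcal{E}(0)+C'(\hbar+\epsilon+N^{-1/3}+\tfrac{\log N}{N})\big)$, so the hypothesis $\mathcal{E}_{N,\epsilon,\hbar}(0)\to0$ forces $\mathcal{E}_{N,\epsilon,\hbar}(t)\to0$ as $\tfrac1N+\epsilon+\hbar\to0$, uniformly on $[0,T]$. Finally, for the weak convergence $\rho_{N:1,\epsilon,\hbar}\to\omega$: the coercivity side of Serfaty's theory (the "different yet related" inequality alluded to in the introduction, proved in section 4) shows that $\mathcal{E}(t)$ — more precisely the part $\mathcal{E}_2(t)$ together with a positive contribution extracted from $\mathcal{E}_1(t)$ — dominates, up to $o(1)$ errors, a quantity controlling $\int\varphi\,(\rho_{N:1,\epsilon,\hbar}-\omega)$ for $\varphi$ in a suitable test class (e.g. via a negative-Sobolev / Coulomb-energy norm of $\rho_{N:1,\epsilon,\hbar}-\mu$, plus the fact that $\mu-\omega=\epsilon\mathfrak{U}\to0$ in that norm). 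Hence $\mathcal{E}_{N,\epsilon,\hbar}(t)\to0$ implies testing against $C_c^\infty$ functions converges, i.e. weak convergence in the sense of measures.

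\textbf{Main obstacle.} The hard part is the exact bookkeeping in the computation of $\frac{d}{dt}\mathcal{E}_1(t)$: quantization replaces the classical Poisson bracket by a commutator of the first-order operator $v_j^k$ with the multiplication operators $u^k(t,x_j)$, $\tfrac12|x_j|^2$, and $V_{pq}$, each producing genuinely nonzero $O(\hbar)$ remainders, and one must verify that after the dust settles these remainders are (i) controlled by $\mathcal{E}_1(t)$ plus admissible $o(1)$ terms, and (ii) do not spoil the exact cancellation that lets the leftover interaction terms be packaged into $\mathfrak{F}'$. The presence of the non-periodic magnetic potential $\tfrac{1}{2\epsilon}x^\perp$ and the compensating confining potential $\tfrac12\sum|x_i|^2$ makes this more delicate than on $\mathbb{T}^2$, since several error terms now carry factors of $x_j$ that must be reabsorbed into the energy; this is exactly where Assumption (A) and the conservation law of Lemma \ref{ENERGY CON} earn their keep.
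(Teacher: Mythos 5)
Your strategy is globally the same as the paper's (compute $\dot{\mathcal{E}}$, package the interaction terms via Lemma \ref{Averaging }, apply Serfaty's inequality, Gr\"onwall, then invoke a coercivity bound for weak convergence), but there are two genuine gaps at the key steps.

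\textbf{(1) Applying Serfaty's inequality with $\mu=\omega+\epsilon\mathfrak{U}$ is not legitimate.} Theorem \ref{Serfaty } requires $\mu$ to be a probability \emph{density}, i.e.\ non-negative with mass $1$. The function $\mu=\omega+\epsilon\mathfrak{U}$ has $\int\mu=1$ (since $\int\mathfrak{U}=0$), but $\mathfrak{U}=\sum_{i,j}\partial_iu^j\partial_ju^i$ has no sign, so $\mu$ can be negative where $\omega$ is small, and the inequality simply does not apply. You only address the boundedness of $\|\mu\|_\infty$, which is not the obstruction. The paper handles this by splitting off all the $\epsilon\mathfrak{U}$ pieces \emph{before} applying Serfaty: it writes $J_2$ as the term with $(\mu_{X_N}-\omega)^{\otimes 2}$ plus several remainders $\mathfrak{R}$ involving $\mathfrak{U}$, shows the remainders are $O(\epsilon)$, applies Serfaty with $\omega$ alone (a genuine probability density), and then converts the resulting bound in terms of $\mathcal{E}_2^*(t)$ (built with $\omega$) back to $\mathcal{E}_2(t)$ (built with $\mu$) via the identity $\mathcal{E}_2^*(t)=\mathcal{E}_2(t)+O(\epsilon)$. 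Without this detour your Gr\"onwall estimate does not close.

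\textbf{(2) The role of conservation of energy.} You cite Lemma \ref{ENERGY CON} only to justify that the traces are finite. In the paper it plays a second and essential algebraic role (Claim \ref{Claim }): differentiating the conserved total energy in time yields the identity
\[
-\frac{1}{2N^{2}}\sum_{i}\mathrm{trace}\Big(v_{i}^{k}\vee\sum_{j\neq i}\partial_{x_{i}^{k}}V_{ij}\,R_{N}(t)\Big)
-\frac{1}{2}\mathrm{trace}\big((v_{1}^{\bot})^{k}\vee v_{1}^{k}R_{N:1}(t)\big)
+\frac{d}{dt}\frac{N-1}{2N}\int V_{12}\rho_{N:2}(t)=0,
\]
which is precisely what cancels, in $\dot{\mathcal{E}}$, the three terms that would otherwise not be controllable by $\mathcal{E}(t)$ plus vanishing errors. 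Your ``key algebraic identity'' is gestured at, but nothing in your sketch explains where these terms go; without the conservation-law cancellation the Gr\"onwall structure is not visible. This is the analogue, in the quantum setting, of the step in {[}10{]} that you invoke, and it requires exactly the careful commutator bookkeeping you flag as the hard part.

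Finally, a small point on the weak convergence: the paper does not extract positivity from $\mathcal{E}_1(t)$; it uses Proposition \ref{Coercivity} applied to $\omega$ (not to $\mu$), passes through Lemma \ref{Estimate } and Jensen/Cauchy--Schwarz, and again uses $\mathcal{E}_2^*=\mathcal{E}_2+O(\epsilon)$. Your phrasing ``via a negative-Sobolev norm of $\rho_{N:1,\epsilon,\hbar}-\mu$'' would again require $\mu$ to be an admissible reference density, which it need not be.
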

\begin{onehalfspace}

\end{onehalfspace}\begin{rem}
The assumption $\mathcal{E}_{N,\epsilon,\hbar}(0)\rightarrow0$ is
reasonably typical, at least for $\epsilon,\hbar$ satisfying some
appropriate relations, as can be seen e.g. through the example constructed
in section (\ref{sec:5 OF CHAP 2}). 
\end{rem}
\begin{rem}
The assumptions on $(\omega,u)$ can be realized for suitable initial
data-see appendix A for more details. 
\end{rem}
The functional $\mathfrak{f}(X_{N},\mu)$ (and thus $\mathcal{E}_{2}(t)$)
is not necessarily a non negative quantity. However, it turns out
that it is bounded from below by a term which vanishes asymptotically.
Therefore $\mathcal{E}(t)\rightarrow0$ implies that the kinetic term
and interaction term vanish separately: $\mathcal{E}_{1}(t)\rightarrow0,\mathcal{E}_{2}(t)\rightarrow0$.
We have the following 
\begin{prop}
\label{low bound }\textup{ ({[}19{]}, Proposition 3.3)} Let $\mu\in L^{\infty}(\mathbb{R}^{2})$
be a probability density. Then

\[
\mathfrak{f}(X_{N},\mu)+\frac{1+||\mu||_{\infty}}{N}+\frac{\log N}{N}\geq0.
\]
 
\end{prop}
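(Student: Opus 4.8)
The plan is to recognize $\mathfrak{f}(X_N,\mu)$ as a \emph{renormalized Coulomb energy} of the neutral charge distribution $\mu_{X_N}-\mu$ and to bound it from below by smearing each point charge at a small length scale $\eta>0$, which in the end will be taken to be $\eta=1/N$. Recall that, with $V=-\frac{1}{2\pi}\log|\cdot|$, one has $-\Delta V=\delta_0$ on $\mathbb{R}^2$, so $V$ is superharmonic. Since $\mu$ is a probability density in $L^\infty$, I would first, assuming the left-hand side finite (otherwise there is nothing to prove), expand
\[
\mathfrak{f}(X_N,\mu)=\frac{1}{N^2}\sum_{i\neq j}V(x_i-x_j)-\frac{2}{N}\sum_{i=1}^{N}(V\star\mu)(x_i)+\int_{(\mathbb{R}^2)^2}V(x-y)\mu(x)\mu(y)\,dx\,dy,
\]
the removal of the diagonal amounting precisely to discarding the infinite self-energy $\frac{1}{N^2}\sum_i V(0)$.

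Next I would introduce, for $\eta\in(0,1)$, the truncated kernel $f_\eta(x):=-\frac{1}{2\pi}\log\max(|x|,\eta)$, so that $f_\eta\le V$ everywhere, $f_\eta=V$ outside $B(0,\eta)$, $f_\eta\equiv-\frac{1}{2\pi}\log\eta$ on $\overline{B(0,\eta)}$, and $-\Delta f_\eta=\delta_0^{(\eta)}$, the uniform probability measure on the circle $\partial B(0,\eta)$. Setting
\[
h_{N,\eta}:=\frac{1}{N}\sum_{i=1}^{N}f_\eta(\cdot-x_i)-V\star\mu,\qquad -\Delta h_{N,\eta}=\frac{1}{N}\sum_{i=1}^{N}\delta_{x_i}^{(\eta)}-\mu,
\]
one notes that the measure on the right has total mass zero, so the $|x|^{-1}$ parts of $\nabla h_{N,\eta}$ cancel at infinity and $\nabla h_{N,\eta}\in L^2(\mathbb{R}^2)$; in particular $\int_{\mathbb{R}^2}|\nabla h_{N,\eta}|^2\,dx\ge 0$.

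The heart of the argument is to expand this nonnegative quantity by Green's identity, $\int|\nabla h_{N,\eta}|^2=\int h_{N,\eta}\,d(-\Delta h_{N,\eta})$, and to compare it with the expansion of $\mathfrak{f}(X_N,\mu)$ term by term. The diagonal $i=j$ contributions give exactly $\frac{1}{N}f_\eta(0)=-\frac{\log\eta}{2\pi N}$, since the average of $f_\eta$ over $\partial B(0,\eta)$ equals $f_\eta(0)$. For $i\ne j$ the term $\int f_\eta(z-x_i)\,d\delta_{x_j}^{(\eta)}(z)$ is $\le\int V(z-x_i)\,d\delta_{x_j}^{(\eta)}(z)\le V(x_j-x_i)$ — the first inequality because $f_\eta\le V$, the second because $V$ is superharmonic so its circle average is at most its value at the centre; crucially this holds for \emph{all} $i\ne j$, so no special treatment of near-collisions is needed. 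Finally, the $\mu$-dependent terms differ from their untruncated counterparts $-\frac{2}{N}\sum_i(V\star\mu)(x_i)+\int_{(\mathbb{R}^2)^2}V\star\mu\,\mu$ only through $f_\eta\star\mu$ versus $V\star\mu$ and through circle averages of $V\star\mu$ versus point values; each discrepancy is nonnegative (again because $f_\eta\le V$ and $V\star\mu$ is superharmonic, using $\mu\ge 0$) and bounded by $\tfrac14\eta^2\|\mu\|_\infty$, since $\int_{B(0,\eta)}\frac{1}{2\pi}\log\frac{\eta}{|x|}\,dx=\tfrac14\eta^2$. Collecting these comparisons gives
\[
0\le\int_{\mathbb{R}^2}|\nabla h_{N,\eta}|^2\,dx\le\mathfrak{f}(X_N,\mu)+\frac{1}{N}f_\eta(0)+\frac{\eta^2}{2}\|\mu\|_\infty,
\]
hence $\mathfrak{f}(X_N,\mu)\ge\frac{\log\eta}{2\pi N}-\frac{\eta^2}{2}\|\mu\|_\infty$. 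Taking $\eta=1/N$ yields $\mathfrak{f}(X_N,\mu)\ge-\frac{\log N}{2\pi N}-\frac{\|\mu\|_\infty}{2N^2}\ge-\frac{\log N}{N}-\frac{1+\|\mu\|_\infty}{N}$, which is the claim (the case $N=1$ handled the same way with, say, $\eta=1$).

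I expect the main obstacle to be the careful bookkeeping of the error terms produced by the smearing — comparing circle averages and convolutions against $f_\eta$ with their untruncated counterparts — and, above all, checking that every truncation correction carries the sign that helps, which is exactly what lets one invoke superharmonicity of $V$ \emph{uniformly} in $i,j$ rather than only for well-separated pairs. A secondary technical point is the rigorous justification of Green's identity on all of $\mathbb{R}^2$, for which charge-neutrality of $\frac{1}{N}\sum_i\delta_{x_i}^{(\eta)}-\mu$ together with the decay of $\mu$ (a harmless cutoff at infinity if $\mu$ is not compactly supported) is used to guarantee $\nabla h_{N,\eta}\in L^2$.
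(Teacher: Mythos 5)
Your proof is correct and reconstructs the standard Serfaty--Rougerie smearing argument that underlies [19, Prop.\ 3.3], which the paper cites without reproducing: truncate the kernel at scale $\eta$, use superharmonicity of $V$ to compare circle averages with point values and $f_\eta$ with $V$, then optimize in $\eta$ (here $\eta=1/N$). The bookkeeping checks out -- in particular $T_2:=\int(V\star\mu)\,d\nu_\eta$ and $T_3:=\int(f_\eta\star\mu_{X_N})\,d\mu$ both equal $\tfrac1N\sum_i(f_\eta\star\mu)(x_i)$, each deviating from $\tfrac1N\sum_i(V\star\mu)(x_i)$ by at most $\tfrac{\eta^2}{4}\|\mu\|_\infty$, which together with $f_\eta(0)=-\tfrac1{2\pi}\log\eta$ gives $\mathfrak f\ge \tfrac{\log\eta}{2\pi N}-\tfrac{\eta^2}{2}\|\mu\|_\infty$, comfortably inside the claimed bound.
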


\section{\label{sec:Gronwall-Estimate CHAP 2} Gronwall Estimate }

We present here a proof of the asymptotic vanishing of the modulated
energy, as stated in the first part of theorem (\ref{Main Thm}).
In the next section we will explain how to topologize this convergence.
The proof below should be regarded as a formal proof. A careful justification
of the calculation below follows by the eigenfunction expansion method
explained the recent work {[}8{]} and is left as an exercise, since
in our view the formal calculation makes the essence of the argument
more visible. In addition, in some places the dependence on the parameters
$\epsilon,\hbar$ is implicit. During the last stage of the estimate
we will apply Serfaty's inequality, which is valid provided the function
$\mu$ is a bounded probability density. This is not necessarily true
for $\mu=\omega+\epsilon\mathfrak{U}$, and for this reason we will
need to consider 

\[
\mathcal{E}_{N,\epsilon,\hbar}^{\ast}(t)\coloneqq\mathcal{E}^{\ast}(t)
\]

\[
\coloneqq\frac{\epsilon}{2N}\mathrm{trace}_{\mathfrak{H}_{N}}(\sqrt{R_{N}(t)}(\stackrel[j=1]{N}{\sum}\underset{k}{\sum}(v_{j}^{k}-u^{k}(t,x_{j}))^{2}+\stackrel[j=1]{N}{\sum}|x_{i}|^{2})\sqrt{R_{N}(t)})
\]

\[
+\frac{N-1}{2N}\underset{(\mathbb{R}^{2})^{2}}{\int}V_{12}\rho_{N:2}(t,x,y)dxdy+\frac{1}{2}\underset{\mathbb{R}^{2}}{\int}(V\star\omega)\omega(t,x)dx-\underset{\mathbb{R}^{2}}{\int}(V\star\omega)(t,x)\rho_{N:1}(t,x)dx.
\]

\textit{Proof of theorem (\ref{Main Thm}).} \textbf{Step 1. Calculation
of $\dot{\mathcal{E}_{1}}(t)$. }

\begin{onehalfspace}
\[
\frac{d}{dt}\frac{\epsilon}{2N}\mathrm{trace}((\stackrel[j=1]{N}{\sum}\underset{k}{\sum}(u^{k}(t,x_{j})-v_{j}^{k})^{2}+\stackrel[j=1]{N}{\sum}\underset{k}{\sum}|x_{j}^{k}|^{2})R_{N}(t))
\]

\[
=\frac{\epsilon}{2N}\mathrm{trace}\left((\stackrel[j=1]{N}{\sum}\underset{k}{\sum}(u^{k}(t,x_{j})-v_{j}^{k})\lor\partial_{t}u(t,x_{j}))R_{N}(t)\right)
\]

\end{onehalfspace}

\[
+\frac{\epsilon}{2N}\mathrm{trace}\left(((\stackrel[j=1]{N}{\sum}\underset{k}{\sum}(u^{k}(t,x_{j})-v_{j}^{k})^{2}+\stackrel[j=1]{N}{\sum}\underset{k}{\sum}|x_{j}^{k}|^{2})\partial_{t}R_{N}(t)\right)
\]

\begin{onehalfspace}
\[
=\frac{\epsilon}{2N}\mathrm{trace}\left((\stackrel[j=1]{N}{\sum}\underset{k}{\sum}(u^{k}(t,x_{j})-v_{j}^{k})\lor\partial_{t}u(t,x_{j}))R_{N}(t)\right)
\]

\end{onehalfspace}

\[
+\frac{1}{i\hbar}\frac{\epsilon}{2N}\mathrm{trace}\left((\stackrel[j=1]{N}{\sum}\underset{k}{\sum}(u^{k}(t,x_{j})-v_{j}^{k})^{2}+\stackrel[j=1]{N}{\sum}\underset{k}{\sum}|x_{j}^{k}|^{2})\right.
\]

\[
\left.[\frac{1}{2}\stackrel[i=1]{N}{\sum}\underset{l}{\sum}v_{i}^{l}{}^{2}+\frac{1}{2}\stackrel[j=1]{N}{\sum}\underset{k}{\sum}|x_{j}^{k}|^{2}+\frac{1}{N\epsilon}\underset{p<q}{\sum}V_{pq},R_{N}(t)]\right)
\]

\begin{onehalfspace}
\[
=\frac{\epsilon}{2N}\mathrm{trace}\left((\stackrel[j=1]{N}{\sum}\underset{k}{\sum}(u^{k}(t,x_{j})-v_{j}^{k})\lor\partial_{t}u(t,x_{j}))R_{N}(t)\right)
\]

\end{onehalfspace}

\[
+\frac{\epsilon}{2N}\frac{i}{\hbar}\mathrm{trace}\left([\frac{1}{2}\stackrel[i=1]{N}{\sum}\underset{l}{\sum}v_{i}^{l}{}^{2}+\frac{1}{2}\stackrel[j=1]{N}{\sum}\underset{k}{\sum}|x_{j}^{k}|^{2}+\frac{1}{N\epsilon}\underset{p<q}{\sum}V_{pq},\right.
\]

\[
\left.\stackrel[j=1]{N}{\sum}\underset{k}{\sum}(u^{k}(t,x_{j})-v_{j}^{k})^{2}+\stackrel[j=1]{N}{\sum}\underset{k}{\sum}|x_{j}^{k}|^{2}]R_{N}(t)\right).
\]

\begin{onehalfspace}
The second equality is by equation (\ref{eq:-5-1}) and the third
equality follows by tracing by parts ( i.e. the identity $\mathrm{trace}(A[B,C])=-\mathrm{trace}(C[A,B])$).
We start with 

\[
\frac{\epsilon}{2N}\frac{i}{\hbar}\mathrm{trace}\left([\frac{1}{N\epsilon}\underset{p<q}{\sum}V_{pq},\stackrel[j=1]{N}{\sum}\underset{k}{\sum}(u^{k}(t,x_{j})-v_{j}^{k})^{2}+\stackrel[j=1]{N}{\sum}\underset{k}{\sum}|x_{j}^{k}|^{2}]R_{N}(t)\right)
\]

\[
=\frac{\epsilon}{2N}\frac{i}{\hbar}\mathrm{trace}\left(\stackrel[j=1]{N}{\sum}\underset{k}{\sum}(u^{k}(t,x_{j})-v_{j}^{k})\lor[\frac{1}{N\epsilon}\underset{p<q}{\sum}V_{pq},i\hbar\partial_{x_{j}^{k}}]R_{N}(t)\right)
\]

\[
=\frac{1}{2N^{2}}\mathrm{trace}\left(\underset{i\neq j}{\sum}\underset{k}{\sum}(u^{k}(t,x_{i})-v_{i}^{k})\lor(\partial_{x_{i}^{k}}V_{ij})R_{N}(t)\right).
\]

In addition 

\[
\frac{\epsilon}{2N}\frac{i}{\hbar}\mathrm{trace}\left([\frac{1}{2}\stackrel[i=1]{N}{\sum}\underset{l}{\sum}v_{i}^{l}{}^{2}+\frac{1}{2}\stackrel[i=1]{N}{\sum}\underset{l}{\sum}|x_{i}^{l}|^{2},\right.
\]

\end{onehalfspace}

\[
\left.\stackrel[j=1]{N}{\sum}\underset{k}{\sum}(u^{k}(t,x_{j})-v_{j}^{k})^{2}+\stackrel[j=1]{N}{\sum}\underset{k}{\sum}|x_{j}^{k}|^{2}]R_{N}(t)\right)
\]

\begin{onehalfspace}
\[
=\frac{\epsilon}{2N}\frac{i}{\hbar}\frac{1}{2}\stackrel[i=1]{N}{\sum}\mathrm{trace}\left([\underset{l}{\sum}v_{i}^{l}{}^{2}+|x_{i}^{l}|^{2},\underset{k}{\sum}(u^{k}(t,x_{i})-v_{i}^{k})^{2}+|x_{i}^{k}|^{2}]R_{N}(t)\right).
\]

We have 
\[
[v_{i}^{l}{}^{2}+|x_{i}^{l}|^{2},(u^{k}(t,x_{i})-v_{i}^{k})^{2}+|x_{i}^{k}|^{2}]
\]

\end{onehalfspace}

\[
=(u^{k}(t,x_{i})-v_{i}^{k})\vee[v_{i}^{l}{}^{2}+|x_{i}^{l}|^{2},(u^{k}(t,x_{i})-v_{i}^{k})]+[v_{i}^{l}{}^{2},|x_{i}^{k}|^{2}].
\]

\begin{onehalfspace}
We compute 

\[
[v_{i}^{l}{}^{2},(u^{k}(t,x_{i})-v_{i}^{k})]=v_{i}^{l}\vee[v_{i}^{l},(u^{k}(t,x_{i})-v_{i}^{k})]
\]

\[
=v_{i}^{l}\vee(-i\hbar\partial_{x_{i}^{l}}u^{k}-[v_{i}^{l},v_{i}^{k}])
\]

\[
=v_{i}^{l}\vee(-i\hbar\partial_{x_{i}^{l}}u^{k}-[-i\hbar\partial_{x_{i}^{l}}+\frac{1}{2\epsilon}(x_{i}^{\bot})^{l},-i\hbar\partial_{x_{i}^{k}}+\frac{1}{2\epsilon}(x_{i}^{\bot})^{k}])
\]

\[
=v_{i}^{l}\vee(-i\hbar\partial_{x_{i}^{l}}u^{k}+\frac{1}{2\epsilon}i\hbar\partial_{x_{i}^{l}}((x_{i}^{\bot})^{k})-\frac{1}{2\epsilon}i\hbar\partial_{x_{i}^{k}}((x_{i}^{\bot})^{l})),
\]

and 
\[
[|x_{i}^{l}|^{2},(u^{k}(t,x_{i})-v_{i}^{k})]=[|x_{i}^{l}|^{2},-v_{i}^{k}]=[|x_{i}^{l}|^{2},i\hbar\partial_{x_{i}^{k}}]=-i\hbar\partial_{x_{i}^{k}}(|x_{i}^{l}|^{2})=-2i\hbar\delta_{lk}x_{i}^{l},
\]

while 

\[
[v_{i}^{l}{}^{2},|x_{i}^{k}|^{2}]=v_{i}^{l}\vee[v_{i}^{l},|x_{i}^{k}|^{2}]=v_{i}^{l}\vee[-i\hbar\partial_{x_{i}^{l}},|x_{i}^{k}|^{2}]=v_{i}^{l}\vee(-i\hbar\partial_{x_{i}^{l}}(|x_{i}^{k}|^{2}))=-2i\hbar v_{i}^{l}\vee\delta_{lk}x_{i}^{l}.
\]

Hence 

\[
[v_{i}^{l}{}^{2}+|x_{i}^{l}|^{2},(u^{k}(t,x_{i})-v_{i}^{k})^{2}+|x_{i}^{k}|^{2}]
\]

\end{onehalfspace}

\[
=(u^{k}(t,x_{i})-v_{i}^{k})\vee(v_{i}^{l}\vee(-i\hbar\partial_{x_{i}^{l}}u^{k}+\frac{1}{2\epsilon}i\hbar\partial_{x_{i}^{l}}((x_{i}^{\bot})^{k})-\frac{1}{2\epsilon}i\hbar\partial_{x_{i}^{k}}((x_{i}^{\bot})^{l})-2i\hbar\delta_{lk}x_{i}^{l})
\]

\begin{onehalfspace}
\[
-2i\hbar v_{i}^{l}\vee\delta_{lk}x_{i}^{l}.
\]

Put $\mathbf{J}_{kl}=(\partial_{x_{i}^{k}}((x_{i}^{\bot})^{l})-\partial_{x_{i}^{l}}((x_{i}^{\bot})^{k})=(\nabla x^{\bot}-\nabla^{T}x^{\bot})_{kl}$
(so that $\mathbf{J}=2\begin{pmatrix}0 & 1\\
-1 & 0
\end{pmatrix}$). With this notation we find 

\[
\frac{\epsilon}{2N}\frac{i}{\hbar}\frac{1}{2}\stackrel[i=1]{N}{\sum}\mathrm{trace}\left([\underset{l}{\sum}v_{i}^{l}{}^{2}+|x_{i}^{l}|^{2},\underset{k}{\sum}(u^{k}(t,x_{i})-v_{i}^{k})^{2}+|x_{i}^{k}|^{2}]R_{N}(t)\right)
\]

\[
=\frac{\epsilon}{2}\frac{i}{\hbar}\frac{1}{2N}\stackrel[i=1]{N}{\sum}\mathrm{trace}\left((u^{k}(t,x_{i})-v_{i}^{k})\vee\right.
\]

\end{onehalfspace}

\[
\left.(v_{i}^{l}\vee(-i\hbar\partial_{x_{i}^{l}}u^{k}+\frac{1}{2\epsilon}i\hbar\partial_{x_{i}^{l}}((x_{i}^{\bot})^{k})-\frac{1}{2\epsilon}i\hbar\partial_{x_{i}^{k}}((x_{i}^{\bot})^{l})-2i\hbar\delta_{lk}x_{i}^{l})-2i\hbar v_{i}^{l}\vee\delta_{lk}x_{i}^{l})R_{N}(t)\right)
\]

\begin{onehalfspace}
\[
=\frac{\epsilon}{2}\frac{1}{2N}\stackrel[i=1]{N}{\sum}\mathrm{trace}\left((u^{k}(t,x_{i})-v_{i}^{k})\vee(v_{i}^{l}\vee(\partial_{x_{i}^{l}}u^{k}+\frac{1}{2\epsilon}\mathbf{J}_{kl})+2x_{i}^{k})+2v_{i}^{l}\vee x_{i}^{l})R_{N}(t)\right).
\]

So concluding (with Einstein's summation applied for the indices $k,l$) 

\[
\dot{\mathcal{E}_{1}}(t)=\frac{\epsilon}{2N}\mathrm{trace}\left(\stackrel[j=1]{N}{\sum}(u^{k}(t,x_{j})-v_{j}^{k})\lor\partial_{t}u^{k}(x_{j})R_{N}(t)\right)
\]

\end{onehalfspace}

\[
+\frac{1}{2N^{2}}\mathrm{trace}\left(\underset{i\neq j}{\sum}(u^{k}(t,x_{i})-v_{i}^{k})\lor(\partial_{x_{i}^{k}}V_{ij})R_{N}(t)\right)
\]

\begin{onehalfspace}
\[
+\frac{\epsilon}{2}\frac{1}{2N}\stackrel[i=1]{N}{\sum}\mathrm{trace}\left((u^{k}(t,x_{i})-v_{i}^{k})\vee(v_{i}^{l}\vee(\partial_{x_{i}^{l}}u^{k}+\frac{1}{2\epsilon}\mathbf{J}_{kl})+2x_{i}^{k})R_{N}(t)\right)
\]

\[
+\frac{\epsilon}{2}\frac{1}{2N}\stackrel[i=1]{N}{\sum}\mathrm{trace}(2v_{i}^{l}\vee x_{i}^{l}R_{N}(t))
\]

\end{onehalfspace}

\[
=\frac{\epsilon}{2N}\stackrel[i=1]{N}{\sum}\mathrm{trace}\left((u^{k}(t,x_{i})-v_{i}^{k})\lor\right.
\]

\[
\left.(\partial_{t}u^{k}(x_{i})+\frac{1}{2}v_{i}^{l}\vee\partial_{x_{i}^{l}}u^{k}+\frac{1}{N\epsilon}\underset{j:i\neq j}{\sum}\partial_{x_{i}^{k}}V_{ij}+\frac{1}{2}v_{i}^{l}\lor\frac{1}{2\epsilon}\mathbf{J}_{kl}+x_{i}^{k})R_{N}(t)\right)
\]

\begin{onehalfspace}
\begin{equation}
+\frac{\epsilon}{2N}\stackrel[i=1]{N}{\sum}\mathrm{trace}(v_{i}^{l}\vee x_{i}^{l}R_{N}(t)).\label{eq:-13}
\end{equation}

\textbf{Step 2. Calculation of $\dot{\mathcal{E}_{2}}(t)$. }

We start by obtaining an evolution equation for the first marginal
of $\rho_{N,\epsilon,\hbar}$ 
\end{onehalfspace}
\begin{claim}
\begin{onehalfspace}
\begin{flushleft}
\textbf{\label{Evolution equation } $\partial_{t}\rho_{N:1}+\nabla.J_{N:1}=0$
}\textit{in $\mathcal{D}'((0,T)\times\mathbb{R}^{2})$.}\textbf{ }
\par\end{flushleft}
\end{onehalfspace}

\end{claim}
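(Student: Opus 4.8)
The plan is to derive the continuity equation for the first marginal directly from the von Neumann equation (\ref{eq:-5-1}) by testing against a fixed scalar function $a\in C_0^\infty(\mathbb{R}^2)$ and integrating against time. Recall that $\rho_{N:1}(t,\cdot)$ is the density of the first marginal $R_{N:1}(t)$, so by the definition of the marginal and of the density, for $a\in C_0^\infty(\mathbb{R}^2)$ we have $\int_{\mathbb{R}^2} a(x)\rho_{N:1}(t,x)\,dx=\mathrm{trace}_{\mathfrak{H}_N}\big((a(x_1)\otimes I^{\otimes(N-1)})R_N(t)\big)$. First I would differentiate this identity in $t$, using $i\hbar\,\partial_t R_N(t)=[\mathscr{H}_N,R_N(t)]$ and tracing by parts (the identity $\mathrm{trace}(A[B,C])=\mathrm{trace}([A,B]C)$ already invoked in the Gronwall computation), to get
\[
\frac{d}{dt}\int_{\mathbb{R}^2} a\,\rho_{N:1}(t,x)\,dx=\frac{1}{i\hbar}\mathrm{trace}_{\mathfrak{H}_N}\big([a(x_1),\mathscr{H}_N]R_N(t)\big).
\]
Then I would compute the commutator $[a(x_1),\mathscr{H}_N]$. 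Since $a(x_1)$ is a multiplication operator in the first particle's variable, it commutes with the confining potential $\tfrac12\sum_i|x_i|^2$, with the interaction $\mathscr{V}_N$ (also a multiplication operator), and with all kinetic terms $v_j^k{}^2$ for $j\neq 1$. The only surviving contribution is $[a(x_1),\tfrac12\sum_k (v_1^k)^2]$.

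The core computation is therefore the single-particle commutator $[a(x_1),\tfrac12\sum_k (v_1^k)^2]$ with $v_1^k=-i\hbar\partial_{x_1^k}+\tfrac{1}{2\epsilon}(x_1^\bot)^k$. Writing $[a,(v_1^k)^2]=v_1^k\vee[a,v_1^k]$ and using $[a,v_1^k]=[a,-i\hbar\partial_{x_1^k}]=i\hbar\,\partial_{x_1^k}a$, one finds $\tfrac12\sum_k[a,(v_1^k)^2]=\tfrac{i\hbar}{2}\sum_k (\partial_{x_1^k}a)\vee v_1^k$. Substituting back,
\[
\frac{d}{dt}\int_{\mathbb{R}^2} a\,\rho_{N:1}(t,x)\,dx=\frac12\sum_{k=1,2}\mathrm{trace}_{\mathfrak{H}_N}\big((\partial_{x_1^k}a)\vee v_1^k\,R_N(t)\big)=\sum_{k=1,2}\int_{\mathbb{R}^2}(\partial_{x^k}a)\,J_{N:1}^k,
\]
where the last equality is precisely the defining property of the current $J_{N:1}^k$ from the Definition preceding Theorem \ref{Serfaty } (after passing from $R_N$ to its first marginal $R_{N:1}$, which is legitimate because the observable acts only on the first factor, and noting that the factor $\tfrac12$ matches the anticommutator normalization there). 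This identity, valid for every $a\in C_0^\infty(\mathbb{R}^2)$ and, after integrating in $t$ against a test function in time, for every test function in $\mathcal{D}((0,T)\times\mathbb{R}^2)$, is exactly the weak formulation of $\partial_t\rho_{N:1}+\nabla\cdot J_{N:1}=0$.

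The main obstacle is not the algebra but the justification that the formal manipulations are legitimate: that $t\mapsto R_N(t)$ is differentiable in the relevant sense, that the unbounded operators $v_1^k$ and $(v_1^k)^2$ can be moved inside the trace and paired with $\sqrt{R_N(t)}$ on either side, and that tracing by parts is valid here. As the authors note at the start of Section \ref{sec:Gronwall-Estimate CHAP 2}, a rigorous treatment proceeds via the eigenfunction expansion method of reference [8]; Assumption (A), together with Lemma \ref{ENERGY CON}, guarantees the finiteness of $\mathrm{trace}_{\mathfrak{H}_N}(\sqrt{R_N(t)}(\sum_j\sum_k (v_j^k)^2+\sum_i|x_i|^2)\sqrt{R_N(t)})$ uniformly in $t$, which is what makes the current $J_{N:1}^k$ well defined and controls the pairing $(\partial_{x_1^k}a)\vee v_1^k$ against $R_N(t)$ (here one also uses that $\partial_{x_1^k}a$ is bounded). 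I would present the commutator identity as the substance of the proof and relegate the approximation argument to a remark referring to [8], consistently with the paper's stated convention of giving formal proofs.
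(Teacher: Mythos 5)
Your proposal is correct and follows essentially the same path as the paper's proof: test against a scalar function, differentiate in time using the von Neumann equation, trace by parts to transfer the commutator onto $a$, observe that only the kinetic term in the first variable survives, compute $[a,(v_1^k)^2]=v_1^k\vee[a,v_1^k]=v_1^k\vee(i\hbar\,\partial_{x_1^k}a)$, pass to the first marginal, and recognize the defining formula for the current $J_{N:1}^k$. The only cosmetic difference is the test class ($C_0^\infty$ versus the paper's $C_b^1$) and that you make explicit which terms of $\mathscr{H}_N$ commute with $a$, which the paper leaves implicit.
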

\begin{onehalfspace}
\textit{Proof.}\textsl{ }Let $a\in C_{b}^{1}(\mathbb{R}^{2})$. 
\[
\frac{d}{dt}(\mathrm{trace}(aR_{N:1}(t)))
\]

\end{onehalfspace}

\[
=\frac{1}{i\hbar}\mathrm{trace}\left(a\otimes I_{\mathfrak{H}_{N-1}}[\frac{1}{2}\stackrel[i=1]{N}{\sum}\underset{k}{\sum}(-i\hbar\partial_{x_{i}}+\frac{1}{2\epsilon}(x_{i}^{\bot})^{k})^{2}+\frac{1}{2}\stackrel[j=1]{N}{\sum}\underset{k}{\sum}|x_{j}^{k}|^{2}+\frac{1}{N\epsilon}\underset{p<q}{\sum}V_{pq},R_{N}(t)]\right)
\]

\begin{onehalfspace}
\[
=\frac{i}{\hbar}\mathrm{trace}\left(\frac{1}{2}[\underset{k}{\sum}(-i\hbar\partial_{k}+\frac{1}{2\epsilon}(x_{1}^{\bot})^{k})^{2},a]R_{N:1}(t)\right)
\]

\[
=\frac{i}{\hbar}\mathrm{trace}(\frac{1}{2}(-i\hbar\partial_{k}+\frac{1}{2\epsilon}(x_{1}^{\bot})^{k})\vee(-i\hbar\partial_{k}a)R_{N:1}(t))=\underset{\mathbb{R}^{2}}{\int}\nabla a\cdot J_{N:1}.
\]

Thus 

\[
\partial_{t}\rho_{N:1}+\nabla.J_{N:1}=0.
\]

\end{onehalfspace}
\begin{onehalfspace}
\begin{flushright}
$\square$
\par\end{flushright}
\end{onehalfspace}

\begin{onehalfspace}
One has

\[
\dot{\mathcal{E}_{2}}(t)=\frac{d}{dt}\frac{N-1}{2N}\underset{(\mathbb{R}^{2})^{2}}{\int}V_{12}\rho_{N:2}(t,x,y)dxdy+\frac{1}{2}\frac{d}{dt}\underset{\mathbb{R}^{2}}{\int}(V\star(\omega+\epsilon\mathfrak{U}))(\omega+\epsilon\mathfrak{U})(t,x)dx
\]

\end{onehalfspace}

\[
-\frac{d}{dt}\underset{\mathbb{R}^{2}}{\int}(V\star(\omega+\epsilon\mathfrak{U}))\rho_{N:1}(t,x)dx
\]

\[
=\frac{d}{dt}\frac{N-1}{2N}\underset{(\mathbb{R}^{2})^{2}}{\int}V_{12}\rho_{N:2}(t,x,y)dxdy
\]

\begin{onehalfspace}
\begin{equation}
+\frac{d}{dt}\frac{1}{2}\underset{\mathbb{R}^{2}}{\int}(V\star(\omega+\epsilon\mathfrak{U}))(\omega+\epsilon\mathfrak{U})(t,x)dx-\frac{d}{dt}\underset{\mathbb{R}^{2}}{\int}(V\star(\omega+\epsilon\mathfrak{U}))\rho_{N:1}(t,x)dx.\label{eq:-9-1}
\end{equation}

The second term in the right hand side of equation (\ref{eq:-9-1})
is 

\[
\frac{d}{dt}\frac{1}{2}\underset{\mathbb{R}^{2}}{\int}(V\star(\omega+\epsilon\mathfrak{U}))(\omega+\epsilon\mathfrak{U})(t,x)dx
\]

\[
=\underset{\mathbb{R}^{2}}{\int}(V\star\partial_{t}\omega)\omega(t,x)dx+\underset{\mathbb{R}^{2}}{\int}(V\star\partial_{t}\omega)(\epsilon\mathfrak{U})(t,x)dx
\]

\end{onehalfspace}

\[
+\underset{\mathbb{R}^{2}}{\int}(V\star\omega)(\epsilon\partial_{t}\mathfrak{U})(t,x)dx+\epsilon^{2}\underset{\mathbb{R}^{2}}{\int}(V\star(\partial_{t}\mathfrak{U}))\mathfrak{U}(t,x)dx,
\]

\begin{onehalfspace}
while the third term in the right hand side of equation (\ref{eq:-9-1})
is 

\[
-\frac{d}{dt}\underset{\mathbb{R}^{2}}{\int}(V\star(\omega+\epsilon\mathfrak{U}))\rho_{N:1}(t,x)dx
\]

\end{onehalfspace}

\[
=-\underset{\mathbb{R}^{2}}{\int}(V\star\partial_{t}\omega)\rho_{N:1}(t,x)dx-\underset{\mathbb{R}^{2}}{\int}(V\star\omega)\partial_{t}\rho_{N:1}(t,x)dx
\]

\begin{onehalfspace}
\[
-\epsilon\underset{\mathbb{R}^{2}}{\int}(V\star\partial_{t}\mathfrak{U})\rho_{N:1}(t,x)dx-\epsilon\underset{\mathbb{R}^{2}}{\int}(V\star\mathfrak{U})\partial_{t}\rho_{N:1}(t,x)dx.
\]

So that concluding 
\[
\dot{\mathcal{E}_{2}}(t)=
\]

\[
\frac{d}{dt}\frac{N-1}{2N}\underset{(\mathbb{R}^{2})^{2}}{\int}V_{12}\rho_{N:2}(t,x,y)dxdy
\]

\[
+\underset{\mathbb{R}^{2}}{\int}(V\star\partial_{t}\omega)\omega(t,x)dx+\underset{\mathbb{R}^{2}}{\int}(V\star\partial_{t}\omega)(\epsilon\mathfrak{U})(t,x)dx
\]

\end{onehalfspace}

\[
+\underset{\mathbb{R}^{2}}{\int}(V\star\omega)(\epsilon\partial_{t}\mathfrak{U})(t,x)dx+\epsilon^{2}\underset{\mathbb{R}^{2}}{\int}(V\star(\partial_{t}\mathfrak{U}))\mathfrak{U}(t,x)dx
\]

\[
-\underset{\mathbb{R}^{2}}{\int}(V\star\partial_{t}\omega)\rho_{N:1}(t,x)dx-\underset{\mathbb{R}^{2}}{\int}(V\star\omega)\partial_{t}\rho_{N:1}(t,x)dx
\]

\begin{onehalfspace}
\begin{equation}
-\epsilon\underset{\mathbb{R}^{2}}{\int}(V\star\partial_{t}\mathfrak{U})\rho_{N:1}(t,x)dx-\epsilon\underset{\mathbb{R}^{2}}{\int}(V\star\mathfrak{U})\partial_{t}\rho_{N:1}(t,x).\label{eq:-14}
\end{equation}

Gathering equations (\ref{eq:-13}) and (\ref{eq:-14}) we conclude
that (applying Einstein's summation )

\[
\dot{\mathcal{E}}(t)=
\]

\[
\frac{\epsilon}{2N}\stackrel[i=1]{N}{\sum}\mathrm{trace}\left((u^{k}(t,x_{i})-v_{i}^{k})\lor\right.
\]

\end{onehalfspace}

\[
\left.(\partial_{t}u^{k}(x_{i})+\frac{1}{2}v_{i}^{l}\vee\partial_{x_{i}^{l}}u^{k}+\frac{1}{N\epsilon}\underset{j:i\neq j}{\sum}\partial_{x_{i}^{k}}V_{ij}+\frac{1}{2}v_{i}^{l}\lor\frac{1}{2\epsilon}\mathbf{J}_{kl}+x_{i}^{k})R_{N}(t)\right)
\]

\[
+\frac{\epsilon}{2N}\stackrel[i=1]{N}{\sum}\mathrm{trace}(v_{i}^{l}\vee x_{i}^{l}R_{N}(t))
\]

\begin{onehalfspace}
\[
+\frac{d}{dt}\frac{N-1}{2N}\underset{(\mathbb{R}^{2})^{2}}{\int}V_{12}\rho_{N:2}(t,x,y)dxdy+\underset{\mathbb{R}^{2}}{\int}(V\star\partial_{t}\omega)\omega(t,x)dx
\]

\[
-\underset{\mathbb{R}^{2}}{\int}(V\star\omega)(t,x)\partial_{t}\rho_{N:1}(t,x)dx-\underset{\mathbb{R}^{2}}{\int}(V\star\partial_{t}\omega)\rho_{N:1}(t,x)dx
\]

\end{onehalfspace}

\[
+\epsilon^{2}\underset{\mathbb{R}^{2}}{\int}(V\star(\partial_{t}\mathfrak{U}))\mathfrak{U}(t,x)dx
\]

\begin{onehalfspace}
\[
-\epsilon\underset{\mathbb{R}^{2}}{\int}(V\star\mathfrak{U})\partial_{t}\rho_{N:1}(t,x)dx-\epsilon\underset{\mathbb{R}^{2}}{\int}(V\star\partial_{t}\mathfrak{U})\rho_{N:1}(t,x)dx
\]
\[
+\underset{\mathbb{R}^{2}}{\int}(V\star\partial_{t}\omega)(\epsilon\mathfrak{U})(t,x)dx+\underset{\mathbb{R}^{2}}{\int}(V\star\omega)(\epsilon\partial_{t}\mathfrak{U})(t,x)dx.
\]

\textbf{Step 3. Rearrangement of terms. }By equation (\ref{eq:E2})
we have 

\[
\partial_{t}u^{k}(t,x_{i})+\frac{1}{2}v_{i}^{l}\vee\partial_{x_{i}^{l}}u^{k}(t,x_{i})
\]

\end{onehalfspace}

\[
=-(u\cdot\nabla u+\nabla p)_{k}(t,x_{i})+\frac{1}{2}v_{i}^{l}\vee\partial_{x_{i}^{l}}u^{k}(t,x_{i})
\]

\begin{onehalfspace}
\[
=-u_{l}\partial_{l}u_{k}(t,x_{i})-(\nabla p)_{k}(t,x_{i})+\frac{1}{2}v_{i}^{l}\vee\partial_{x_{i}^{l}}u^{k}(t,x_{i})
\]

\end{onehalfspace}

\[
=(\frac{1}{2}v_{i}^{l}-\frac{1}{2}u_{l})\lor\partial_{l}u_{k}(t,x_{i})-(\nabla p)_{k}(t,x_{i}).
\]

\begin{onehalfspace}
In addition, owing to claim (\ref{Evolution equation }) and noticing
that $u^{\bot}=\nabla(V\star\omega)$ we have 

\[
-\underset{\mathbb{R}^{2}}{\int}(V\star\omega)(t,x)\partial_{t}\rho_{N:1}(t)=-\underset{\mathbb{R}^{2}}{\int}\nabla(V\star\omega)(t,x_{1})\cdot J_{N:1}
\]

\end{onehalfspace}

\[
=-\underset{\mathbb{R}^{2}}{\int}u^{\bot}(t,x_{1})\cdot J_{N:1}
\]

\begin{onehalfspace}
\[
=-\mathrm{trace}\left(\frac{1}{2}(u^{\bot})^{k}\lor(\frac{1}{2\epsilon}(x_{1}^{\bot})^{k}-i\hbar\partial_{k})R_{N:1}\right)=-\frac{1}{2}\mathrm{trace}((u^{\bot})^{k}\lor(v_{1}^{k})R_{N:1}),
\]

and 
\[
\frac{\epsilon}{2N}\stackrel[i=1]{N}{\sum}\mathrm{trace}\left((u^{k}(t,x_{i})-v_{i}^{k})\lor(\frac{1}{2}v_{i}^{l}\lor\frac{1}{2\epsilon}\mathbf{J}_{kl})R_{N}(t)\right)
\]

\[
=\frac{1}{2N}\stackrel[i=1]{N}{\sum}\mathrm{trace}\left((u^{k}(t,x_{i})-v_{i}^{k})\lor(\frac{1}{2}v_{i}^{l}\lor\frac{1}{2}\mathbf{J}_{kl})R_{N}(t)\right)
\]

\[
=\frac{1}{2}\mathrm{trace}\left((u^{k}(t,x_{1})-v_{1}^{k})\lor(\frac{1}{2}v_{1}^{l}\lor\frac{1}{2}\mathbf{J}_{kl})R_{N:1}(t)\right)
\]

\end{onehalfspace}

\[
=\frac{1}{2}\mathrm{trace}\left((u^{k}(t,x_{1})-v_{1}^{k})\lor(\frac{1}{2}\mathbf{J}_{kl}v_{1}^{l})R_{N:1}(t)\right)
\]

\begin{onehalfspace}
\[
=\frac{1}{2}\mathrm{trace}\left((\mathbf{J}_{kl}(u^{k}(t,x_{1})-v_{1}^{k}))\lor(\frac{1}{2}v_{1}^{l})R_{N:1}(t)\right)
\]

\[
=\frac{1}{2}\mathrm{trace}((u^{\bot}(t,x_{1}))^{l}\lor(v_{1}^{l})R_{N:1}(t))-\frac{1}{2}\mathrm{trace}((v_{1}^{\bot})^{l}\vee(v_{1}^{l})R_{N:1}(t)),
\]

so that 
\[
-\underset{\mathbb{R}^{2}}{\int}(V\star\omega)\partial_{t}\rho_{N:1}(t,x)dx+\frac{\epsilon}{2N}\stackrel[i=1]{N}{\sum}\mathrm{trace}\left((u^{k}(t,x_{i})-v_{i}^{k})\lor(\frac{1}{2}v_{i}^{l}\lor\frac{1}{2\epsilon}\mathbf{J}_{kl})R_{N}(t)\right)
\]

\end{onehalfspace}

\[
=-\frac{1}{2}\mathrm{trace}((v_{1}^{\bot})^{l}\vee(v_{1}^{l})R_{N:1}(t)).
\]

\begin{onehalfspace}
In addition, using that $\nabla(V\star\mathfrak{U})=\nabla p$ we
get 

\[
-\epsilon\underset{\mathbb{R}^{2}}{\int}(V\star\mathfrak{U})\partial_{t}\rho_{N:1}(t,x)=-\epsilon\underset{\mathbb{R}^{2}}{\int}\nabla(V\star\mathfrak{U})(t,x_{1})\cdot J_{N:1}
\]

\end{onehalfspace}

\[
=-\epsilon\underset{\mathbb{R}^{2}}{\int}\nabla p(t,x_{1})\cdot J_{N:1}
\]

\begin{onehalfspace}
\[
=-\frac{1}{2}\epsilon\mathrm{trace}((\nabla p)_{k}\lor((\frac{1}{2\epsilon}x_{1}^{\bot})_{k}-i\hbar\partial_{k})R_{N:1})=-\frac{1}{2}\epsilon\mathrm{trace}((\nabla p)_{k}\lor v_{k}^{1}R_{N:1}),
\]

so that $\dot{\mathcal{E}}(t)$ is recast as 

\[
\dot{\mathcal{E}}(t)=
\]

\[
\frac{\epsilon}{2N}\stackrel[i=1]{N}{\sum}\mathrm{trace}\left((u^{k}(t,x_{i})-v_{i}^{k})\lor\right.
\]

\end{onehalfspace}

\[
\left.((\frac{1}{2}v_{i}^{l}-\frac{1}{2}u_{l}(t,x_{i}))\lor\partial_{l}u_{k}(t,x_{i})-(\nabla p)_{k}(t,x_{i})+\frac{1}{N\epsilon}\underset{j:i\neq j}{\sum}\partial_{x_{i}^{k}}V_{ij}+x_{i}^{k})R_{N}(t)\right)
\]

\[
+\frac{\epsilon}{2N}\stackrel[i=1]{N}{\sum}\mathrm{trace}(v_{i}^{l}\vee x_{i}^{l}R_{N}(t))
\]

\begin{onehalfspace}
\[
-\mathrm{trace}((v_{1}^{\bot})^{l}\vee(\frac{1}{2}v_{1}^{l})R_{N:1}(t))
\]

\[
+\frac{d}{dt}\frac{N-1}{2N}\underset{(\mathbb{R}^{2})^{2}}{\int}V_{12}\rho_{N:2}(t,x,y)dxdy+\underset{\mathbb{R}^{2}}{\int}(V\star\partial_{t}\omega)\omega(t,x)dx
\]

\[
-\underset{\mathbb{R}^{2}}{\int}(V\star\partial_{t}\omega)\rho_{N:1}(t,x)dx+\epsilon^{2}\underset{\mathbb{R}^{2}}{\int}(V\star(\partial_{t}\mathfrak{U}))\mathfrak{U}(t,x)dx
\]

\[
-\frac{1}{2}\epsilon\mathrm{trace}((\nabla p)_{k}\lor v_{k}^{1}R_{N:1}(t))-\epsilon\underset{\mathbb{R}^{2}}{\int}(V\star\partial_{t}\mathfrak{U})\rho_{N:1}(t,x)dx
\]
\[
+\underset{\mathbb{R}^{2}}{\int}(V\star\partial_{t}\omega)(\epsilon\mathfrak{U})(t,x)dx+\underset{\mathbb{R}^{2}}{\int}(V\star\omega)(t,x)(\epsilon\partial_{t}\mathfrak{U})(t,x)dx
\]

\[
=-\frac{\epsilon}{2N}\stackrel[i=1]{N}{\sum}\mathrm{trace}\left((u^{k}(t,x_{i})-v_{i}^{k})\lor((\frac{1}{2}u_{l}(t,x_{i})-\frac{1}{2}v_{i}^{l})\lor\partial_{l}u_{k}(t,x_{i}))R_{N}(t)\right)
\]

\[
+\frac{\epsilon}{2N}\stackrel[i=1]{N}{\sum}\mathrm{trace}\left(u^{k}(t,x_{i})\vee(-(\nabla p)_{k}(t,x_{i})+\frac{1}{N\epsilon}\underset{j:i\neq j}{\sum}\partial_{x_{i}^{k}}V_{ij})R_{N}(t)\right)
\]

\[
+\frac{1}{2N^{2}}\stackrel[i=1]{N}{\sum}\mathrm{trace}\left(-v_{i}^{k}\vee\underset{j:i\neq j}{\sum}\partial_{x_{i}^{k}}V_{ij}R_{N}(t)\right)-\frac{1}{2}\mathrm{trace}((v_{1}^{\bot})^{k}\vee(v_{1}^{k})R_{N:1}(t))
\]

\end{onehalfspace}

\[
+\frac{d}{dt}\frac{N-1}{2N}\underset{(\mathbb{R}^{2})^{2}}{\int}V_{12}\rho_{N:2}(t,x,y)dxdy
\]

\begin{onehalfspace}
\[
-\underset{\mathbb{R}^{2}}{\int}(V\star\partial_{t}\omega)\rho_{N:1}(t,x)dx+\underset{\mathbb{R}^{2}}{\int}(V\star\partial_{t}\omega)\omega(t,x)dx+\underset{\mathbb{R}^{2}}{\int}(V\star\partial_{t}\omega)(\epsilon\mathfrak{U})(t,x)dx
\]

\[
-\epsilon\underset{\mathbb{R}^{2}}{\int}(V\star\partial_{t}\mathfrak{U})\rho_{N:1}(t,x)dx
\]

\[
+\underset{\mathbb{R}^{2}}{\int}(V\star\omega)(\epsilon\partial_{t}\mathfrak{U})(t,x)+\epsilon^{2}\underset{\mathbb{R}^{2}}{\int}(V\star(\partial_{t}\mathfrak{U}))\mathfrak{U}(t,x)
\]

\[
+\frac{\epsilon}{2N}\stackrel[i=1]{N}{\sum}\mathrm{trace}((u^{k}(t,x_{i})-v_{i}^{k})\lor x_{i}^{k}R_{N}(t))+\frac{\epsilon}{2N}\stackrel[i=1]{N}{\sum}\mathrm{trace}(v_{i}^{l}\vee x_{i}^{l}R_{N}(t)).
\]

We now apply conservation of energy. 
\end{onehalfspace}
\begin{claim}
\begin{onehalfspace}
\label{Claim } \textit{It holds that }
\[
-\frac{1}{2N^{2}}\stackrel[i=1]{N}{\sum}\mathrm{trace}\left(v_{i}^{k}\vee\underset{j:i\neq j}{\sum}\partial_{x_{i}^{k}}V_{ij}R_{N}(t)\right)-\frac{1}{2}\mathrm{trace}((v_{1}^{\bot})^{k}\vee(v_{1}^{k})R_{N:1}(t))
\]
\[
+\frac{d}{dt}\frac{N-1}{2N}\underset{(\mathbb{R}^{2})^{2}}{\int}V_{12}\rho_{N:2}(t)=0.
\]
. 
\end{onehalfspace}
\end{claim}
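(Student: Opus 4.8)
The plan is to recognize the claim as (essentially) the differentiated-in-$t$ form of the energy conservation identity of Lemma~\ref{ENERGY CON}, and to prove it by computing $\frac{d}{dt}\frac{N-1}{2N}\int_{(\mathbb{R}^2)^2}V_{12}\rho_{N:2}(t,x,y)\,dxdy$ directly. Writing this quantity as $\frac{N-1}{2N}\mathrm{trace}_{\mathfrak{H}_N}\big((V_{12}\otimes I^{\otimes(N-2)})R_N(t)\big)$, differentiating via (\ref{eq:-5-1}) and tracing by parts turns it into $\frac{1}{i\hbar}\frac{N-1}{2N}\mathrm{trace}\big([V_{12},\mathscr{H}_N]R_N(t)\big)$. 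Since $V_{12}$ is a multiplication operator it commutes with the confining term $\frac12\sum_i|x_i|^2$ and with every $V_{pq}$, so $[V_{12},\mathscr{H}_N]=[V_{12},\frac12\sum_{i,k}(v_i^k)^2]$. Using $[V_{12},v_i^k]=i\hbar\,\partial_{x_i^k}V_{12}$ (which vanishes unless $i\in\{1,2\}$) together with the Leibniz rule, $[V_{12},(v_i^k)^2]=i\hbar\big((\partial_{x_i^k}V_{12})\vee v_i^k\big)$, whence
\[
\frac{d}{dt}\frac{N-1}{2N}\int_{(\mathbb{R}^2)^2}V_{12}\rho_{N:2}(t,x,y)\,dxdy=\frac{N-1}{4N}\sum_{i\in\{1,2\}}\sum_k\mathrm{trace}\big((\partial_{x_i^k}V_{12})\vee v_i^k\,R_N(t)\big).
\]

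I would then symmetrize. Since $R_N$ is permutation symmetric and $V$ is even — so the transposition of particles $1$ and $2$ leaves $\partial_{x_1^k}V_{12}$ unchanged while exchanging $v_1^k$ with $v_2^k$ — the two summands $i=1,2$ are equal, and more generally $\mathrm{trace}\big(v_i^k\vee(\partial_{x_i^k}V_{ij})R_N\big)$ has the same value for every ordered pair $i\neq j$; summing over the $N(N-1)$ such pairs and using the symmetry of $\vee$ gives
\[
\frac{d}{dt}\frac{N-1}{2N}\int_{(\mathbb{R}^2)^2}V_{12}\rho_{N:2}(t,x,y)\,dxdy=\frac{1}{2N^2}\sum_{i=1}^N\sum_k\mathrm{trace}\Big(v_i^k\vee\sum_{j:j\neq i}(\partial_{x_i^k}V_{ij})\,R_N(t)\Big),
\]
which is exactly the negative of the first term of the claim, so those two terms cancel. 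It remains to observe that the middle term vanishes identically: for any vector of (not necessarily commuting) operators $w=(w^1,w^2)$ one has $\sum_k(w^\bot)^k\vee w^k=(-w^2)\vee w^1+w^1\vee w^2=0$ because the anticommutator is symmetric; applying this with $w=v_1$ yields $\mathrm{trace}\big((v_1^\bot)^k\vee v_1^k\,R_{N:1}(t)\big)=0$. (This is the same mechanism — an antisymmetric constant matrix, here $\mathbf{J}$, contracted against the symmetric tensor $v_i^k\vee v_i^l$ — by which the purely magnetic contributions collapsed in Steps 1 and 3.) Combining the two observations proves the claim. Equivalently, one could differentiate Lemma~\ref{ENERGY CON} in $t$ and identify $\frac{\epsilon}{2N}\frac{d}{dt}\mathrm{trace}\big((\sum_{j,k}(v_j^k)^2+\sum_i|x_i|^2)R_N(t)\big)$ with $-\frac{1}{2N^2}\sum_i\sum_k\mathrm{trace}(v_i^k\vee\sum_{j\neq i}(\partial_{x_i^k}V_{ij})R_N(t))$ by the same commutator computation (the confining-versus-$v$ terms cancel, the $\mathbf{J}$ term vanishes), then append the trivially zero term $-\frac12\mathrm{trace}((v_1^\bot)^k\vee v_1^k R_{N:1})$.

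I expect the only genuine obstacle to be the analytic justification that this formal argument sidesteps: that $R_N(t)$ together with the unbounded operators $(v_i^k)^2$, $|x_i|^2$, $V_{pq}$ and their relevant products lie in the appropriate trace/Hilbert--Schmidt classes (this is where assumption~(A) and the essential self-adjointness of $\mathscr{H}_N$ established in section~6 enter), that $V_{12}\rho_{N:2}(t,\cdot)\in L^1$ so the interaction energy is finite and differentiable in $t$, and that the tracing-by-parts manipulations are legitimate for these unbounded operators. As the paper already notes for the calculation as a whole, these points are handled rigorously via the eigenfunction-expansion method of {[}8{]}; modulo them, the computation above is the whole proof.
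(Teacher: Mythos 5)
Your proof is correct, and it takes a genuinely different route from the paper's. The paper differentiates the \emph{kinetic plus confining} part of the conserved energy, obtains
$-\frac{1}{2N^2}\sum_i\mathrm{trace}\big(v_i^k\vee\sum_{j\neq i}\partial_{x_i^k}V_{ij}R_N(t)\big)-\frac12\mathrm{trace}\big((v_1^\bot)^k\vee v_1^k\,R_{N:1}(t)\big)$, and then invokes Lemma~\ref{ENERGY CON} to conclude that adding $\frac{d}{dt}\frac{N-1}{2N}\int V_{12}\rho_{N:2}$ must give zero. You instead differentiate the \emph{interaction} term directly, use $[V_{12},\mathscr{H}_N]=[V_{12},\tfrac12\sum_{i,k}(v_i^k)^2]$ with $[V_{12},(v_i^k)^2]=i\hbar\,v_i^k\vee\partial_{x_i^k}V_{12}$, and symmetrize, which produces $+\frac{1}{2N^2}\sum_i\mathrm{trace}(v_i^k\vee\sum_{j\neq i}\partial_{x_i^k}V_{ij}R_N)$ — exactly the cancellation of the first term — and you dispose of the middle term by the identity $\sum_k(w^\bot)^k\vee w^k=(-w^2)\vee w^1+w^1\vee w^2=0$. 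This route is more elementary: it does not invoke Lemma~\ref{ENERGY CON}, and it makes explicit a fact the paper's argument passes over — that the $\mathbf{J}$-contraction term $\frac12\mathrm{trace}\big((v_1^\bot)^k\vee v_1^k R_{N:1}\big)$ is \emph{identically} zero by antisymmetry of $\mathbf{J}$ contracted against the symmetric tensor $v_1^k\vee v_1^l$, rather than a nontrivial quantity that happens to appear in the energy balance. (In fact this observation shows the claim is logically equivalent to the single statement that $\frac{d}{dt}\frac{N-1}{2N}\int V_{12}\rho_{N:2}$ equals the symmetrized interaction-force trace.) The approach buys a self-contained, two-step proof and a conceptual simplification; the paper's approach buys nothing extra here but fits naturally into the energy-conservation framework it has already set up.

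One small remark on phrasing: it is not quite right that the transposition of particles $1$ and $2$ ``leaves $\partial_{x_1^k}V_{12}$ unchanged'' — since $\nabla V$ is odd, the conjugated multiplication operator is $\partial_{x_2^k}V_{12}=-\partial_{x_1^k}V_{12}$. What the transposition actually does is carry the whole summand $v_1^k\vee\partial_{x_1^k}V_{12}$ to $v_2^k\vee\partial_{x_2^k}V_{12}$, which is the equality you need; the conclusion and the factor bookkeeping $\frac{N-1}{4N}\cdot 2=\frac{N-1}{2N}=\frac{1}{2N^2}\cdot N(N-1)$ are correct.
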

\begin{onehalfspace}
\textit{Proof. }We compute the time derivative of the right hand side
of equation (\ref{ENERGY CON}). 
\[
\frac{d}{dt}\frac{\epsilon}{2N}\mathrm{trace}_{\mathfrak{H}_{N}}\left((\stackrel[j=1]{N}{\sum}\underset{k}{\sum}(v_{j}^{k})^{2}+\stackrel[j=1]{N}{\sum}|x_{j}|^{2})R_{N}(t)\right)
\]

\end{onehalfspace}

\[
=\frac{\epsilon}{2N}\frac{1}{i\hbar}\mathrm{trace}_{\mathfrak{H}_{N}}\left((\stackrel[j=1]{N}{\sum}\underset{k}{\sum}(v_{j}^{k})^{2}+\stackrel[j=1]{N}{\sum}|x_{j}|^{2})[\mathscr{H}_{N},R_{N}(t)]\right)
\]

\begin{onehalfspace}
\[
=-\frac{\epsilon}{2N}\frac{1}{i\hbar}\stackrel[j=1]{N}{\sum}\mathrm{trace}_{\mathfrak{H}_{N}}\left(R_{N}(t)[\frac{1}{2}\stackrel[i=1]{N}{\sum}\underset{l}{\sum}v_{i}^{l}{}^{2}+\frac{1}{2}\stackrel[i=1]{N}{\sum}\underset{l}{\sum}|x_{i}^{l}|{}^{2}+\frac{1}{N\epsilon}\underset{p<q}{\sum}V_{pq},\underset{k}{\sum}(v_{j}^{k})^{2}+|x_{j}^{k}|^{2}]\right).
\]

First 

\[
-\frac{\epsilon}{2N}\frac{1}{i\hbar}\stackrel[j=1]{N}{\sum}\stackrel[i=1]{N}{\sum}\mathrm{trace}_{\mathfrak{H}_{N}}\left(R_{N}(t)[\frac{1}{2}v_{i}^{l}{}^{2}+\frac{1}{2}|x_{i}^{l}|{}^{2},v_{j}^{k}{}^{2}+|x_{j}^{k}|^{2}]\right)
\]

\end{onehalfspace}

\[
=-\frac{\epsilon}{2N}\frac{1}{i\hbar}\stackrel[i=1]{N}{\sum}\mathrm{trace}_{\mathfrak{H}_{N}}\left(R_{N}(t)[\frac{1}{2}v_{i}^{l}{}^{2}+\frac{1}{2}|x_{i}^{l}|{}^{2},v_{i}^{k}{}^{2}+|x_{i}^{k}|^{2}]\right)
\]

\begin{onehalfspace}
\[
=-\frac{\epsilon}{4N}\frac{1}{i\hbar}\stackrel[i=1]{N}{\sum}\mathrm{trace}_{\mathfrak{H}_{N}}\left(R_{N}(t)(v_{i}^{l}\lor(v_{i}^{k}\vee[v_{i}^{l},v_{i}^{k}])-2i\hbar v_{i}^{l}\vee\delta_{lk}x_{i}^{l}+2i\hbar v_{i}^{k}\vee\delta_{lk}x_{i}^{l})\right)
\]

\[
=\frac{\epsilon}{4N}\stackrel[i=1]{N}{\sum}\mathrm{trace}_{\mathfrak{H}_{N}}\left(R_{N}(t)(v_{i}^{l}\lor(v_{i}^{k}\lor(\frac{1}{2\epsilon}\partial_{x_{i}^{l}}((x_{i}^{\bot})^{k})-\frac{1}{2\epsilon}\partial_{x_{i}^{k}}((x_{i}^{\bot})^{l})+2v_{i}^{l}\vee x_{i}^{l}-2v_{i}^{k}\vee x_{i}^{k}\right)
\]

\[
=-\frac{1}{4N}\stackrel[i=1]{N}{\sum}\mathrm{trace}_{\mathfrak{H}_{N}}(R_{N}(t)v_{i}^{l}\lor(v_{i}^{k}\lor\frac{1}{2}\mathbf{J}_{kl}))
\]

\end{onehalfspace}

\[
=-\frac{1}{2N}\stackrel[i=1]{N}{\sum}\mathrm{trace}_{\mathfrak{H}_{N}}(R_{N}(t)v_{i}^{l}\lor(v_{i}{}^{\bot})^{l})=-\frac{1}{2}\mathrm{trace}(R_{N:1}(t)v_{1}^{k}\lor(v_{1}{}^{\bot})^{k}).
\]

\begin{onehalfspace}
Second 

\[
-\frac{\epsilon}{2N}\frac{1}{i\hbar}\stackrel[j=1]{N}{\sum}\mathrm{trace}_{\mathfrak{H}_{N}}\left(R_{N}(t)[\frac{1}{N\epsilon}\underset{p<q}{\sum}V_{pq},\underset{k}{\sum}(v_{j}^{k})^{2}+|x_{j}^{k}|^{2}]\right)
\]

\[
=-\frac{1}{2N^{2}}\frac{1}{i\hbar}\stackrel[j=1]{N}{\sum}\underset{p<q}{\sum}\mathrm{trace}_{\mathfrak{H}_{N}}(R_{N}(t)[V_{pq},(v_{j}^{k})^{2}])
\]

\end{onehalfspace}

\[
=-\frac{1}{2N^{2}}\frac{1}{i\hbar}\stackrel[j=1]{N}{\sum}\underset{p<q}{\sum}\mathrm{trace}_{\mathfrak{H}_{N}}(R_{N}(t)v_{j}^{k}\lor[V_{pq},v_{j}^{k}])
\]

\begin{onehalfspace}
\[
=-\frac{1}{2N^{2}}\stackrel[j=1]{N}{\sum}\underset{p<q}{\sum}\mathrm{trace}_{\mathfrak{H}_{N}}(R_{N}(t)v_{j}^{k}\lor\partial_{x_{j}^{k}}V_{pq})
\]

\end{onehalfspace}

\[
=-\frac{1}{2N^{2}}\stackrel[i=1]{N}{\sum}\mathrm{trace}_{\mathfrak{H}_{N}}(R_{N}(t)v_{i}^{k}\lor\underset{j:i\neq j}{\sum}\partial_{x_{i}^{k}}V_{ij}).
\]

\begin{onehalfspace}
Since the left hand side of equation (\ref{ENERGY CON}) is constant
in time we conclude 

\[
-\frac{1}{2N^{2}}\stackrel[i=1]{N}{\sum}\mathrm{trace}_{\mathfrak{H}_{N}}(R_{N}(t)v_{i}^{k}\lor\underset{j:i\neq j}{\sum}\partial_{x_{i}^{k}}V_{ij})-\frac{1}{2}\mathrm{trace}(R_{N:1}(t)v_{1}^{k}\lor(v_{1}{}^{\bot})^{k})
\]

\end{onehalfspace}

\[
+\frac{d}{dt}\frac{N-1}{2N}\underset{(\mathbb{R}^{2})^{2}}{\int}V_{12}\rho_{N:2}(t)=0.
\]

\begin{onehalfspace}
\begin{flushright}
$\square$
\par\end{flushright}
\end{onehalfspace}

\begin{onehalfspace}
In view of claim (\ref{Claim }) $\dot{\mathcal{E}}(t)$ writes 

\[
\dot{\mathcal{E}}(t)
\]

\[
=-\frac{\epsilon}{2N}\stackrel[i=1]{N}{\sum}\mathrm{trace}\left((u^{k}(t,x_{i})-v_{i}^{k})\lor((\frac{1}{2}u_{l}-\frac{1}{2}v_{i}^{l})\lor\partial_{l}u_{k})R_{N}(t)\right)
\]

\[
+\frac{\epsilon}{2N}\stackrel[i=1]{N}{\sum}\mathrm{trace}\left(u^{k}(t,x_{i})\lor(-(\nabla p)_{k}+\frac{1}{N\epsilon}\underset{j:i\neq j}{\sum}\partial_{x_{i}^{k}}V_{ij})R_{N}(t)\right)
\]

\[
-\underset{\mathbb{R}^{2}}{\int}(V\star\partial_{t}\omega)\rho_{N:1}(t,x)+\underset{\mathbb{R}^{2}}{\int}(V\star\partial_{t}\omega)\omega(t,x)dx
\]

\[
+\underset{\mathbb{R}^{2}}{\int}(V\star\partial_{t}\omega)(\epsilon\mathfrak{U})(t,x)dx-\epsilon\underset{\mathbb{R}^{2}}{\int}(V\star\partial_{t}\mathfrak{U})\rho_{N:1}(t,x)dx
\]

\end{onehalfspace}

\[
+\underset{\mathbb{R}^{2}}{\int}(V\star\omega)(\epsilon\partial_{t}\mathfrak{U})(t,x)dx
\]

\begin{onehalfspace}
\begin{equation}
+\epsilon^{2}\underset{\mathbb{R}^{2}}{\int}(V\star(\partial_{t}\mathfrak{U}))\mathfrak{U}(t,x)dx+\frac{\epsilon}{2N}\stackrel[i=1]{N}{\sum}\mathrm{trace}(u^{k}(t,x_{i})\lor x_{i}^{k}R_{N}(t)).\label{eq:arrange of terms}
\end{equation}

\textbf{Step 4. Accessing Serfaty's Inequality. }

Note that 

\[
-\underset{\mathbb{R}^{2}}{\int}(V\star\partial_{t}\omega)\rho_{N:1}(t,x)dx+\underset{\mathbb{R}^{2}}{\int}(V\star\partial_{t}\omega)(x)\omega(t,x)dx
\]

\end{onehalfspace}

\[
+\underset{\mathbb{R}^{2}}{\int}(V\star\partial_{t}\omega)(\epsilon\mathfrak{U})(t,x)dx
\]

\begin{onehalfspace}
\[
=\underset{(\mathbb{R}^{2})^{2}}{\int}V(x-y)\partial_{t}\omega(y)(\omega+\epsilon\mathfrak{U}-\rho_{N:1}(t))(x)dydx.
\]

and 

\[
-\epsilon\underset{\mathbb{R}^{2}}{\int}(V\star\partial_{t}\mathfrak{U})\rho_{N:1}(t,x)dx+\underset{\mathbb{R}^{2}}{\int}(V\star\omega)(\epsilon\partial_{t}\mathfrak{U})(t,x)dx
\]

\end{onehalfspace}

\[
+\epsilon^{2}\underset{\mathbb{R}^{2}}{\int}(V\star(\partial_{t}\mathfrak{U}))\mathfrak{U}(t,x)dx
\]

\begin{onehalfspace}
\[
=\epsilon\underset{(\mathbb{R}^{2})^{2}}{\int}V(x-y)\partial_{t}\mathfrak{U}(y)(\omega+\epsilon\mathfrak{U}-\rho_{N:1})(x)dydx.
\]

Denote $\mu\coloneqq\omega+\epsilon\mathfrak{U}$. We have the identity 

\[
\underset{(\mathbb{R}^{2})^{2}}{\int}(u(t,x)-u(t,y))\nabla V(x-y)(\frac{N-1}{N}\rho_{N:2}(t)(x,y)+\mu(t,x)\mu(t,y)-2\rho_{N:1}(x)\mu(t,y))dxdy
\]

\[
=\frac{N-1}{N}\underset{(\mathbb{R}^{2})^{2}}{\int}(u(t,x)-u(t,y))\nabla V(x-y)\rho_{N:2}(t)(x,y)dxdy-2\epsilon\underset{\mathbb{R}^{2}}{\int}u(t,x)\cdot\nabla p(t,x)\rho_{N:1}(t,x)dx
\]

\begin{equation}
+2\underset{\mathbb{R}^{2}}{\int}\nabla(V\star\omega u)(t,x)(\rho_{N:1}-\mu)(x)dx+2\epsilon\underset{\mathbb{R}^{2}}{\int}\nabla(V\star u\mathfrak{U})(t,x)(\rho_{N:1}-\mu)(x)dx.\label{eq:-12-1}
\end{equation}

By equation (\ref{eq:arrange of terms}) we have 
\[
\dot{\mathcal{E}}(t)
\]

\[
=-\frac{\epsilon}{2N}\stackrel[i=1]{N}{\sum}\mathrm{trace}\left((u^{k}(t,x_{i})-v_{i}^{k})\lor((\frac{1}{2}u_{l}-\frac{1}{2}v_{i}^{l})\lor\partial_{l}u_{k})R_{N}(t)\right)
\]

\[
+\frac{\epsilon}{2N}\stackrel[i=1]{N}{\sum}\mathrm{trace}\left(u^{k}(t,x_{i})\lor(-(\nabla p)_{k}+\frac{1}{N\epsilon}\underset{j:i\neq j}{\sum}\partial_{x_{i}^{k}}V_{ij})R_{N}(t)\right)
\]

\[
+\underset{(\mathbb{R}^{2})^{2}}{\int}V(x-y)\partial_{t}\omega(y)(\omega+\epsilon\mathfrak{U}-\rho_{N:1}(t))(x)dydx
\]

\[
+\epsilon\underset{(\mathbb{R}^{2})^{2}}{\int}V(x-y)\partial_{t}\mathfrak{U}(y)(\omega+\epsilon\mathfrak{U}-\rho_{N:1})(x)dydx
\]

\end{onehalfspace}

\[
+\frac{\epsilon}{2N}\stackrel[i=1]{N}{\sum}\mathrm{trace}(u^{k}(t,x_{i})\lor x_{i}^{k}R_{N}(t))
\]

\begin{onehalfspace}
\[
=-\frac{\epsilon}{2N}\stackrel[i=1]{N}{\sum}\mathrm{trace}\left((u^{k}(t,x_{i})-v_{i}^{k})\lor((\frac{1}{2}u_{l}-\frac{1}{2}v_{i}^{l})\lor\partial_{l}u_{k})R_{N}(t)\right)
\]

\[
+\frac{\epsilon}{2N}\stackrel[i=1]{N}{\sum}\mathrm{trace}\left(u^{k}(t,x_{i})\lor(-(\nabla p)_{k}+\frac{1}{N\epsilon}\underset{j:i\neq j}{\sum}\partial_{x_{i}^{k}}V_{ij})R_{N}(t)\right)
\]

\[
+\underset{(\mathbb{R}^{2})^{2}}{\int}\nabla V(x-y)\cdot u(t,y)\omega(t,y)(\rho_{N:1}(t)-\omega-\epsilon\mathfrak{U})(x)dydx
\]

\[
+\epsilon\underset{\mathbb{R}^{2}}{\int}\partial_{t}p(x)(\omega+\epsilon\mathfrak{U}-\rho_{N:1})(x)dx+\frac{\epsilon}{2N}\stackrel[i=1]{N}{\sum}\mathrm{trace}(u^{k}(t,x_{i})\lor x_{i}^{k}R_{N}(t)).
\]

Inserting equation (\ref{eq:-12-1}) in the above yields 

\[
\dot{\mathcal{E}}(t)=-\frac{\epsilon}{2N}\stackrel[i=1]{N}{\sum}\mathrm{trace}((u^{k}(t,x_{i})-v_{i}^{k})\lor((\frac{1}{2}u_{l}-\frac{1}{2}v_{i}^{l})\lor\partial_{l}u_{k})R_{N}(t))
\]

\[
+\frac{1}{2}\underset{(\mathbb{R}^{2})^{2}}{\int}(u(t,x)-u(t,y))\nabla V(x-y)(\frac{N-1}{N}\rho_{N:2}(t)(x,y)+\mu(t,x)\mu(t,y)-2\rho_{N:1}(x)\mu(t,y))dxdy
\]

\[
-\epsilon\underset{\mathbb{R}^{2}}{\int}\nabla(V\star u\mathfrak{U})(t,x)(\rho_{N:1}-\mu)(x)dx
\]

\[
+\epsilon\underset{\mathbb{R}^{2}}{\int}\partial_{t}p(x)(\omega+\epsilon\mathfrak{U}-\rho_{N:1})(x)dx
\]

\[
+\frac{\epsilon}{N}\stackrel[i=1]{N}{\sum}\mathrm{trace}(x_{i}^{k}u^{k}(t,x_{i})R_{N}(t))\coloneqq\stackrel[i=1]{5}{\sum}J_{i}.
\]

\textbf{Step 5. Gronwall inequality and conclusion.} We estimate each
of the $J_{i}$ separately: 

\textit{Estimating $J_{1}$. }

\[
|J_{1}|
\]

\end{onehalfspace}

\[
\leq\frac{\epsilon}{2N}\stackrel[i=1]{N}{\sum}\left|\mathrm{trace}((u^{k}(t,x_{i})-v_{i}^{k})\lor((\frac{1}{2}u^{l}(t,x_{i})-\frac{1}{2}v_{i}^{l})\lor\partial_{l}u^{k}(t,x_{i}))R_{N}(t))\right|
\]

\[
\leq\frac{\epsilon}{N}||\nabla u||_{L^{\infty}L^{\infty}}\stackrel[i=1]{N}{\sum}||(u^{k}-v_{i}^{k})R_{N}^{\frac{1}{2}}(t)||_{2}||(u_{l}-v_{i}^{l})R_{N}^{\frac{1}{2}}(t)||_{2}
\]

\begin{onehalfspace}
\begin{equation}
\leq||\nabla u||_{L^{\infty}L^{\infty}}\mathcal{E}_{1}(t).\label{eq:-15}
\end{equation}

\textit{Estimating $J_{2}$. }

By lemma (\ref{Averaging }) we can write 

\[
J_{2}=\frac{1}{2}\underset{(\mathbb{R}^{2})^{N}}{\int}\underset{(\mathbb{R}^{2})^{2}-\triangle}{\int}(u(t,x)-u(t,y))\nabla V(x-y)(\mu_{X_{N}}-\omega(t,\cdot))^{\otimes2}(x,y)dxdy\rho_{N,\epsilon,\hbar}(t)dX_{N}
\]

\[
+\frac{1}{2}\epsilon\underset{(\mathbb{R}^{2})^{2}}{\int}(u(t,x)-u(t,y))\nabla V(x-y)\mathfrak{R}(t,x,y)dxdy
\]

\end{onehalfspace}

\[
-\epsilon\underset{(\mathbb{R}^{2})^{2}}{\int}(u(t,x)-u(t,y))\nabla V(x-y)\rho_{N:1}(t,x)\mathfrak{U}(t,y)dxdy,
\]

\begin{onehalfspace}
where 

\[
\mathfrak{R}(t,x,y)\coloneqq\mathfrak{U}(t,y)\omega(t,x)+\mathfrak{U}(t,x)\omega(t,y)+\epsilon\mathfrak{U}(t,x)\mathfrak{U}(t,y).
\]

We observe the following bounds 
\[
\left|\epsilon\underset{(\mathbb{R}^{2})^{2}}{\int}(u(t,x)-u(t,y))\nabla V(x-y)\rho_{N:1}(t,x)\mathfrak{U}(t,y)dxdy\right|
\]

\end{onehalfspace}

\[
\leq\epsilon(||\nabla V\star(u\mathfrak{U})||_{L^{\infty}L^{\infty}}+||u\nabla V\star\mathfrak{U}(t,x)||_{L^{\infty}L^{\infty}})\lesssim\epsilon,
\]

\begin{onehalfspace}
and 

\[
\left|\frac{1}{2}\epsilon\underset{(\mathbb{R}^{2})^{2}}{\int}(u(t,x)-u(t,y))\nabla V(x-y)\mathfrak{R}(t,x,y)dxdy\right|
\]

\end{onehalfspace}

\[
=\epsilon\left|\underset{(\mathbb{R}^{2})^{2}}{\int}u(t,x)\nabla V(x-y)\mathfrak{R}(t,x,y)dxdy\right|
\]

\[
\leq\epsilon||\nabla V\star\mathfrak{U}||_{L^{\infty}L^{\infty}}||u\omega||_{L^{\infty}L^{1}}+\epsilon||\nabla V\star\omega||_{L^{\infty}L^{\infty}}||u\mathfrak{U}||_{L^{\infty}L^{1}}
\]

\begin{onehalfspace}
\begin{equation}
+\epsilon^{2}||\nabla V\star\mathfrak{U}||_{L^{\infty}L^{\infty}}||u\mathfrak{U}||_{L^{\infty}L^{1}}\lesssim\epsilon.\label{eq:-33}
\end{equation}

Therefore by theorem (\ref{Serfaty })

\[
|J_{2}|
\]

\[
\leq\underset{(\mathbb{R}^{2})^{N}}{\int}\left|\underset{(\mathbb{R}^{2})^{2}-\triangle}{\int}(u(t,x)-u(t,y))\nabla V(x-y)(\mu_{X_{N}}-\omega)^{\otimes2}(x,y)dxdy\right|\rho_{N,\epsilon,\hbar}(t)dX_{N}+O(\epsilon)
\]

\[
\leq\underset{(\mathbb{R}^{2})^{N}}{\int}C||\nabla u(t,\cdot)||_{L^{\infty}}((\mathfrak{f}_{N}(X_{N},\omega(t,\cdot))+C(1+||\omega(t,\cdot)||_{L^{\infty}})N^{-\frac{1}{3}}+\frac{\log N}{N})\rho_{N,\epsilon,\hbar}(t)dX_{N}
\]

\end{onehalfspace}

\[
+\underset{(\mathbb{R}^{2})^{N}}{\int}2C||u(t,\cdot)||_{W^{1,\infty}}(1+||\omega(t,\cdot)||_{L^{\infty}})N^{-\frac{1}{2}}))\rho_{N,\epsilon,\hbar}(t)dX_{N}+O(\epsilon)
\]

\begin{onehalfspace}
\[
\leq\underset{(\mathbb{R}^{2})^{N}}{\int}C||\nabla u(t,\cdot)||_{L^{\infty}}\mathfrak{f}_{N}(X_{N},\omega(t,\cdot))\rho_{N,\epsilon,\hbar}(t)dX_{N}
\]

\end{onehalfspace}

\[
+C||\nabla u||_{L^{\infty}L^{\infty}}(1+||\omega||_{L^{\infty}L^{\infty}})\frac{1}{N^{\frac{1}{3}}}+C||\nabla u||_{L^{\infty}L^{\infty}}\frac{\log N}{N}
\]

\begin{onehalfspace}
\[
+2C||u||_{L^{\infty}W^{1,\infty}}(1+||\omega||_{L^{\infty}L^{\infty}})N^{-\frac{1}{2}}+O(\epsilon).
\]

By proposition (\ref{low bound }) we have 

\[
\underset{(\mathbb{R}^{2})^{N}}{\int}C||\nabla u(t,\cdot)||_{L^{\infty}}\mathfrak{f}_{N}(X_{N},\omega)\rho_{N,\epsilon,\hbar}(t)dX_{N}
\]

\end{onehalfspace}

\[
=\frac{1}{2}\underset{(\mathbb{R}^{2})^{N}}{\int}C||\nabla u(t,\cdot)||_{L^{\infty}}(\mathfrak{f}_{N}(X_{N},\omega)+\frac{(1+||\omega(t,\cdot)||_{L^{\infty}})}{N}+\frac{\log N}{N})\rho_{N,\epsilon,\hbar}(t)dX_{N}
\]

\begin{onehalfspace}
\[
-\frac{1}{2}C||\nabla u(t,\cdot)||_{L^{\infty}}(\frac{(1+||\omega(t,\cdot)||_{L^{\infty}})}{N}+\frac{\log N}{N})
\]

\[
\leq C||\nabla u||_{L^{\infty}L^{\infty}}(\mathcal{E}_{2}^{\ast}(t)+\frac{1+||\omega||_{L^{\infty}L^{\infty}}}{2N}+\frac{\log N}{N})
\]

\end{onehalfspace}

\[
+\frac{1}{2}C||\nabla u||_{L^{\infty}L^{\infty}}(\frac{1+||\omega||_{L^{\infty}L^{\infty}}}{2N}+\frac{\log N}{N}).
\]

\begin{onehalfspace}
To finish the estimate for $J_{2}$, we shall replace $\mathcal{E}_{2}^{\ast}$
by $\mathcal{E}_{2}$, with the expense of adding a term vanishing
as $\epsilon\rightarrow0$. Indeed, the bound below follows from the
remarks following the definition of $\mathcal{E}(t)$ 
\end{onehalfspace}

\begin{equation}
\epsilon||(V\star\mathfrak{U})\omega||_{L^{\infty}L^{1}}+\epsilon^{2}||(V\star\mathfrak{U})\mathfrak{U}||_{L^{\infty}L^{1}}+\epsilon||\rho_{N:1}(V\star\mathfrak{U})||_{L^{\infty}L^{1}}\lesssim\epsilon,\label{eq:-35}
\end{equation}

\begin{onehalfspace}
and shows that 
\begin{equation}
\mathcal{E}_{2}(t)+O(\epsilon)=\mathcal{E}_{2}^{\ast}(t).\label{eq:-34}
\end{equation}

Hence 

\[
|J_{2}|\leq C||\nabla u||_{L^{\infty}L^{\infty}}(\mathcal{E}_{2}(t)+\frac{1+||\omega||_{L^{\infty}L^{\infty}}}{2N}+\frac{\log N}{N})
\]

\end{onehalfspace}

\[
+\frac{1}{2}C||\nabla u||_{L^{\infty}L^{\infty}}(\frac{1+||\omega||_{L^{\infty}L^{\infty}}}{2N}+\frac{\log N}{N})
\]

\begin{onehalfspace}
\[
+C||\nabla u||_{L^{\infty}L^{\infty}}(1+||\omega||_{L^{\infty}L^{\infty}})\frac{1}{N^{\frac{1}{3}}}+C||\nabla u||_{L^{\infty}L^{\infty}}\frac{\log N}{N}
\]

\begin{equation}
+2C||u||_{L^{\infty}W^{1,\infty}}(1+||\omega||_{L^{\infty}L^{\infty}})N^{-\frac{1}{2}}+O(\epsilon).\label{eq:-11-3}
\end{equation}

\textit{Estimating $J_{3},J_{4}$ and $J_{5}$.}

The terms $J_{3},J_{4}$ and $J_{5}$ are of order $\epsilon$:

\begin{equation}
|J_{3}|\leq\epsilon||\nabla(-\Delta)^{-1}(u\mathfrak{U})(\rho_{N:1}(t)-\omega-\epsilon\mathfrak{U})||_{1}\label{eq:-10-1}
\end{equation}

\[
\leq\epsilon(||\nabla(-\Delta)^{-1}(u\mathfrak{U})||_{L^{\infty}L^{\infty}}+||\nabla(-\Delta)^{-1}(u\mathfrak{U})||_{L^{\infty}L^{\infty}}||\omega||_{L^{\infty}L^{1}})
\]

\end{onehalfspace}

\[
+\epsilon^{2}||\mathfrak{U}||_{L^{\infty}L^{1}}||\nabla(-\Delta)^{-1}(u\mathfrak{U})||_{L^{\infty}L^{\infty}})\lesssim\epsilon.
\]

\begin{onehalfspace}
\[
|J_{4}|\leq\epsilon||\partial_{t}p(\omega+\epsilon\mathfrak{U}-\rho_{N:1})||_{1}
\]

\[
\leq\epsilon(||\partial_{t}p||_{L^{\infty}L^{\infty}}||\rho_{N:1}||_{L^{\infty}L^{1}}+||\partial_{t}p||_{L^{\infty}L^{\infty}}||\omega||_{L^{\infty}L^{1}})
\]

\begin{equation}
+\epsilon^{2}||\partial_{t}p||_{L^{\infty}L^{\infty}}||\mathfrak{U}||_{L^{\infty}L^{1}}\lesssim\epsilon.\label{eq:-11-2}
\end{equation}

\end{onehalfspace}

Finally, not that 
\[
\frac{\epsilon}{2N}\stackrel[i=1]{N}{\sum}\mathrm{trace}(|x_{i}^{k}|^{2}R_{N}(t))\leq\mathcal{E}_{1}(t),
\]

thus 

\begin{equation}
|J_{5}|\leq\frac{\epsilon}{N}\stackrel[i=1]{N}{\sum}\mathrm{trace}((|x_{i}^{k}|^{2}+|u^{k}|^{2}(t,x_{i}))R_{N}(t))\leq2\mathcal{E}_{1}(t)+\epsilon||u||_{L^{\infty}L^{\infty}}^{2}.\label{eq:-32}
\end{equation}

\begin{onehalfspace}
By proposition (\ref{low bound }) and inequality (\ref{eq:-35})
there is a constant $\gamma$ such that 
\[
E(t)\coloneqq\mathcal{E}(t)+\frac{1+||\omega||_{L^{\infty}L^{\infty}}}{N}+\frac{\log N}{N}+\gamma\epsilon\geq0.
\]

By inequalities (\ref{eq:-15}),(\ref{eq:-11-3}), (\ref{eq:-10-1}),
(\ref{eq:-11-2}), (\ref{eq:-32}) and Gronwall, $E(t)\rightarrow0$
as $\frac{1}{N}+\epsilon+\hbar\rightarrow0$. This a fortiori implies
$\mathcal{E}(t)\rightarrow0$ as $\frac{1}{N}+\epsilon+\hbar\rightarrow0$,
which concludes the proof. 
\end{onehalfspace}
\begin{onehalfspace}
\begin{flushright}
$\square$
\par\end{flushright}
\end{onehalfspace}

\section{Weak Convergence}

Here we show that the asymptotic vanishing of $\mathcal{E}(t)$ implies
weak convergence $\rho_{N:1,\epsilon,\hbar}\rightarrow\omega$. This
fact is essentially already contained in the literature (see e.g.
{[}16{]}, section 3.4), and is included mainly for the sake of clarity
of exposition. In the expense of a bit more technique, one can show
that in fact $\rho_{N:1,\epsilon,\hbar}\rightarrow\omega$ in some
appropriate negative Sobolev space -- see {[}16{]}, section 3.4
for a guidance on how this is to be done. A key ingredient to establish
this weak convergence is the following coercivity estimate for the
modulated energy
\begin{prop}
\begin{flushleft}
\textup{({[}19{]}, Proposition 3.6) }\label{Coercivity} Let $0<\alpha\leq1$.
There are constants $C=C(\alpha)>0,\lambda=\lambda(\alpha)>0$ such
that for any $\varphi\in C^{0,\alpha}(\mathbb{R}^{2})$, any probability
density $\mu\in L^{\infty}(\mathbb{R}^{2})$ and any $X_{N}$ with
$\forall i\neq j:x_{i}\neq x_{j}$ it holds that 
\par\end{flushleft}
\begin{flushleft}
\[
\left|\underset{\mathbb{R}^{2}}{\int}\varphi(x)(\frac{1}{N}\stackrel[i=1]{N}{\sum}\delta_{x_{j}}-\mu)(dx)\right|
\]
\par\end{flushleft}
\[
\leq C||\varphi||_{C^{0,\alpha}(\mathbb{R}^{2})}N^{-\frac{\lambda}{2}}+C||\varphi||_{\dot{H}^{1}(\mathbb{R}^{2})}\left(\mathfrak{f}(X_{N},\mu)+\frac{1+||\mu||_{\infty}}{N}+\frac{\log N}{N}\right)^{\frac{1}{2}}.
\]
\end{prop}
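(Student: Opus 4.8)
The strategy I would use is Serfaty's ``smearing'' argument: the only obstruction to testing $\mu_{X_N}-\mu$ directly against a Sobolev function is that its atomic part carries infinite Coulomb energy, so one first replaces each Dirac by a thin shell at a scale $\eta>0$ (to be optimized at the end), bounds the resulting smooth object by Cauchy--Schwarz in the $\dot{H}^{-1}$ metric, and then shows that this regularized energy exceeds $\mathfrak{f}(X_N,\mu)$ only by errors controlled in terms of $N$, $\eta$ and $\|\mu\|_\infty$. Concretely, for $\eta>0$ let $\delta_{x_i}^{(\eta)}$ be the uniform probability measure on $\partial B(x_i,\eta)$ and set $\mu_{X_N}^{(\eta)}:=\frac1N\sum_{i=1}^N\delta_{x_i}^{(\eta)}$. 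Since $\varphi\in C^{0,\alpha}$ and $\delta_{x_i}^{(\eta)}$ is supported at distance $\eta$ from $x_i$, one has $\big|\varphi(x_i)-\int\varphi\,d\delta_{x_i}^{(\eta)}\big|\le\|\varphi\|_{C^{0,\alpha}}\eta^{\alpha}$; averaging over $i$,
\[
\Big|\int_{\mathbb{R}^2}\varphi\,d(\mu_{X_N}-\mu)\Big|\le\|\varphi\|_{C^{0,\alpha}}\eta^{\alpha}+\Big|\int_{\mathbb{R}^2}\varphi\,d(\mu_{X_N}^{(\eta)}-\mu)\Big|.
\]

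Next, since $V$ is the Green function of $-\Delta$ on $\mathbb{R}^2$, any mean-zero measure $\nu$ with sufficient decay satisfies $\|\nu\|_{\dot{H}^{-1}}^2=\iint V(x-y)\,d\nu(x)d\nu(y)$ and $\big|\int\varphi\,d\nu\big|\le\|\varphi\|_{\dot{H}^1}\|\nu\|_{\dot{H}^{-1}}$; applying this to $\nu=\mu_{X_N}^{(\eta)}-\mu$ (a difference of probability measures, hence mean-zero, and now of finite energy) gives
\[
\Big|\int_{\mathbb{R}^2}\varphi\,d(\mu_{X_N}^{(\eta)}-\mu)\Big|\le\|\varphi\|_{\dot{H}^1}\,\mathfrak{f}_{\eta}(X_N,\mu)^{1/2},\qquad \mathfrak{f}_{\eta}(X_N,\mu):=\iint_{(\mathbb{R}^2)^2} V(x-y)\,d(\mu_{X_N}^{(\eta)}-\mu)^{\otimes2}.
\]
Expanding $\mathfrak{f}_\eta$ and $\mathfrak{f}$ and subtracting (the $\mu\otimes\mu$ parts cancel), $\mathfrak{f}_\eta-\mathfrak{f}$ is the sum of three contributions: (a) the self-energy $\frac1{N^2}\sum_i\iint V\,d\delta_{x_i}^{(\eta)}d\delta_{x_i}^{(\eta)}=\frac1{2\pi N}\log\frac1\eta$; (b) $\frac1{N^2}\sum_{i\ne j}\big(\iint V\,d\delta_{x_i}^{(\eta)}d\delta_{x_j}^{(\eta)}-V(x_i-x_j)\big)$, which is $\le 0$ because $-\log|\cdot|$ is superharmonic (Newton's theorem gives $V\star\delta_{x_i}^{(\eta)}=-\frac1{2\pi}\log\max(|\cdot-x_i|,\eta)\le V(\cdot-x_i)$ pointwise); and (c) $-\frac2N\sum_i\int(V\star\mu)\,d(\delta_{x_i}^{(\eta)}-\delta_{x_i})$, which is bounded in absolute value by $C(1+\|\mu\|_\infty)\eta$ since $\nabla(V\star\mu)=\nabla V\star\mu\in L^\infty$ with norm $\lesssim\|\mu\|_1^{1/2}\|\mu\|_\infty^{1/2}$ and $\|\mu\|_1=1$. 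Hence $\mathfrak{f}_{\eta}(X_N,\mu)\le\mathfrak{f}(X_N,\mu)+CN^{-1}\log(1/\eta)+C(1+\|\mu\|_\infty)\eta$.

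Combining the three steps,
\[
\Big|\int_{\mathbb{R}^2}\varphi\,d(\mu_{X_N}-\mu)\Big|\le\|\varphi\|_{C^{0,\alpha}}\eta^{\alpha}+\|\varphi\|_{\dot{H}^1}\Big(\mathfrak{f}(X_N,\mu)+\frac{C\log(1/\eta)}{N}+C(1+\|\mu\|_\infty)\eta\Big)^{1/2}.
\]
Choosing $\eta=N^{-\beta}$ with a small $\beta\in(0,1)$ makes $\eta\le N^{-1}$ and $\log(1/\eta)=\beta\log N$ for $N$ large, so the two $\eta$-errors inside the parenthesis are absorbed into $C\big(\tfrac{1+\|\mu\|_\infty}{N}+\tfrac{\log N}{N}\big)$, while $\eta^{\alpha}=N^{-\alpha\beta}$; setting $\lambda:=2\alpha\beta$ (and enlarging $C$ to absorb the finitely many small values of $N$) yields exactly the asserted inequality, bounding the Hölder seminorm by $\|\varphi\|_{C^{0,\alpha}}$.

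The genuinely delicate point is the energy comparison of the third paragraph: obtaining the correct sign in (b) via superharmonicity/Newton's theorem, and bounding (c) uniformly over \emph{all} admissible configurations $X_N$ using only $\|\mu\|_\infty$ together with $\|\mu\|_1=1$. This is the potential-theoretic core of Serfaty's method; the duality step and the optimization are routine by comparison. One should also check in the duality step that $\mu$ being compactly supported (or merely $L^1\cap L^\infty$) forces $V\star(\mu_{X_N}^{(\eta)}-\mu)$ to decay like $|x|^{-1}$ at infinity, so that $\|\mu_{X_N}^{(\eta)}-\mu\|_{\dot{H}^{-1}}$ is finite and the integration by parts legitimate.
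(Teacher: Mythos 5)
The paper does not prove this proposition; it simply cites Serfaty {[}19{]}, Proposition 3.6. Your reconstruction follows Serfaty's smearing argument faithfully and captures all of its substantive ingredients: replacing each Dirac by the uniform measure on a sphere of radius $\eta$, paying an $\eta^\alpha$ price from the H\"older regularity of $\varphi$, passing to $\dot{H}^{1}/\dot{H}^{-1}$ duality for the regularized object, and comparing $\mathfrak{f}_\eta$ to $\mathfrak{f}$ by splitting into the self-energy (computed by Newton's theorem), the $i\ne j$ cross terms (sign controlled via superharmonicity of $V$), and the $\mu$-cross term (controlled by $\|\nabla(V\star\mu)\|_\infty\lesssim\|\mu\|_1^{1/2}\|\mu\|_\infty^{1/2}$). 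All of these steps are correct and are exactly what the cited reference does.

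There is, however, a genuine arithmetic slip in your final optimization. You choose $\eta=N^{-\beta}$ with $\beta\in(0,1)$ ``small'' and claim $\eta\le N^{-1}$. This is false: for $\beta<1$ and $N>1$ one has $N^{-\beta}>N^{-1}$, and then the term $C(1+\|\mu\|_\infty)\eta=C(1+\|\mu\|_\infty)N^{-\beta}$ is \emph{not} absorbed into $C\bigl(\tfrac{1+\|\mu\|_\infty}{N}+\tfrac{\log N}{N}\bigr)$ uniformly in $N$, since $N^{1-\beta}\to\infty$. The fix is immediate: take $\beta=1$, i.e.\ $\eta=1/N$. Then $\eta^\alpha=N^{-\alpha}$ (so $\lambda=2\alpha$), $\log(1/\eta)/N=\log N/N$, and $(1+\|\mu\|_\infty)\eta=(1+\|\mu\|_\infty)/N$, which gives precisely the stated estimate. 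Apart from this correctable slip, the proof is sound and is the same argument as in {[}19{]}.
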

Let $X_{N}$ with $\forall i\neq j:x_{i}\neq x_{j}$. Specifying proposition
(\ref{Coercivity}) to $\mu=\omega$ and $\alpha=1$, we see there
are constant $\lambda,C>0$ such that for any $\varphi\in C^{0,1}(\mathbb{R}^{2})$
we have 

\[
\left|\underset{\mathbb{R}^{2}}{\int}\varphi(x)(\mu_{N}-\omega)(x)dx\right|
\]

\begin{equation}
\leq C||\varphi||_{C^{0,1}(\mathbb{R}^{2})}N^{-\frac{\lambda}{2}}+C||\varphi||_{\dot{H}^{1}(\mathbb{R}^{2})}\left(\mathfrak{f}(X_{N},\omega)+\frac{(1+||\omega||_{L^{\infty}L^{\infty}})}{N}+\frac{\log N}{N}\right)^{\frac{1}{2}}.\label{eq:-24}
\end{equation}

In addition we have the following simple identity, observed in {[}18{]},
lemma 7.4. The proof is included for the sake of completeness 
\begin{lem}
\begin{flushleft}
\label{Estimate } If $\rho_{N}$ is a symmetric probability density
on $(\mathbb{R}^{2})^{N},$ $\mu\in L^{\infty}(\mathbb{R}^{2})\cap L^{1}(\mathbb{R}^{2})$
and $\varphi\in C_{b}(\mathbb{R}^{2})$ then\textbf{ }
\[
\underset{\mathbb{R}^{2}}{\int}\varphi(x)(\rho_{N:1}-\mu)(x)dx=\frac{1}{N}\underset{(\mathbb{R}^{2})^{N}}{\int}\left(\underset{\mathbb{R}^{2}}{\int}(\stackrel[i=1]{N}{\sum}\delta_{x_{i}}-N\mu)\varphi(x)dx\right)\rho_{N}(X_{N})dX_{N}.
\]
\par\end{flushleft}
\end{lem}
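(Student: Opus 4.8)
The plan is to unwind the definition of the first marginal $\rho_{N:1}$ and then use the symmetry of $\rho_N$. First I would recall that, by definition of the marginal, for any $\varphi \in C_b(\mathbb{R}^2)$ one has
\[
\underset{\mathbb{R}^2}{\int}\varphi(x)\rho_{N:1}(x)\,dx = \underset{(\mathbb{R}^2)^N}{\int}\varphi(x_1)\rho_N(X_N)\,dX_N.
\]
Next, exploiting that $\rho_N$ is a symmetric probability density, the right-hand side is unchanged if $x_1$ is replaced by any $x_i$; averaging over $i=1,\dots,N$ gives
\[
\underset{\mathbb{R}^2}{\int}\varphi(x)\rho_{N:1}(x)\,dx = \frac{1}{N}\underset{(\mathbb{R}^2)^N}{\int}\left(\stackrel[i=1]{N}{\sum}\varphi(x_i)\right)\rho_N(X_N)\,dX_N = \frac{1}{N}\underset{(\mathbb{R}^2)^N}{\int}\left(\underset{\mathbb{R}^2}{\int}\stackrel[i=1]{N}{\sum}\delta_{x_i}\,\varphi(x)\,dx\right)\rho_N(X_N)\,dX_N.
\]

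Then I would handle the $\mu$ term: since $\mu \in L^1(\mathbb{R}^2)$ and $\varphi \in C_b(\mathbb{R}^2)$, the integral $\int_{\mathbb{R}^2}\varphi(x)\mu(x)\,dx$ is finite and constant in $X_N$, so because $\rho_N$ is a probability density we may write
\[
\underset{\mathbb{R}^2}{\int}\varphi(x)\mu(x)\,dx = \frac{1}{N}\underset{(\mathbb{R}^2)^N}{\int}\left(\underset{\mathbb{R}^2}{\int}N\mu\,\varphi(x)\,dx\right)\rho_N(X_N)\,dX_N.
\]
Subtracting this from the previous display and combining the two inner integrals yields exactly the claimed identity.

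There is no real obstacle here — the only points requiring a word of care are the integrability bookkeeping (finiteness of $\int\varphi\mu$, which uses $\mu\in L^1\cap L^\infty$ and $\varphi$ bounded, and finiteness of each $\int\varphi(x_i)\rho_N\,dX_N$, which uses that $\varphi$ is bounded and $\rho_N$ has mass one) and the invocation of Fubini to move the $dx$ integral inside the $dX_N$ integral, both of which are justified by these same bounds. The essential content is just the symmetrization trick together with the fact that $\rho_N$ integrates to $1$.
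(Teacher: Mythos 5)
Your proof is correct and follows essentially the same route as the paper: it uses the symmetry of $\rho_N$ to express $\int\varphi\,\rho_{N:1}$ as the average $\frac{1}{N}\sum_i\int\varphi(x_i)\rho_N\,dX_N$, and then folds the $\mu$ term into the $X_N$-integral using $\int\rho_N\,dX_N=1$. The integrability remarks you add are sound but not strictly necessary for the argument.
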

\begin{proof}
By symmetry of $\rho_{N}$ we have the identity 

\[
\underset{\mathbb{R}^{2}}{\int}\varphi(x_{1})\rho_{N:1}(x_{1})dx_{1}=\underset{(\mathbb{R}^{2})^{N}}{\int}\frac{1}{N}\stackrel[i=1]{N}{\sum}\varphi(x_{i})\rho_{N}(x_{1},...,x_{N})dx_{1}...dx_{N}.
\]

Therefore

\[
\underset{\mathbb{R}^{2}}{\int}\varphi(x)(\rho_{N:1}-\mu)(x)dx
\]

\[
=\underset{(\mathbb{R}^{2})^{N}}{\int}\frac{1}{N}\stackrel[i=1]{N}{\sum}\varphi(x_{i})\rho_{N}(x_{1},...,x_{N})dx_{1}...dx_{N}-\underset{\mathbb{R}^{2}}{\int}\varphi(x)\mu(x)dx
\]

\[
=\underset{(\mathbb{R}^{2})^{N}}{\int}\frac{1}{N}\stackrel[i=1]{N}{\sum}\varphi(x_{i})\rho_{N}(x_{1},...,x_{N})dx_{1}...dx_{N}
\]

\[
-\frac{1}{N}\underset{(\mathbb{R}^{2})^{N}}{\int}\left(\underset{\mathbb{R}^{2}}{\int}\varphi(x)N\mu(x)dx\right)\rho_{N}(x_{1},...,x_{N})dX_{N}
\]

\[
=\frac{1}{N}\underset{(\mathbb{R}^{2})^{N}}{\int}\rho_{N}\left(\underset{\mathbb{R}^{2}}{\int}(\stackrel[i=1]{N}{\sum}\delta_{x_{i}}-N\mu)\varphi dx\right)dX_{N}.
\]
\end{proof}
Integrating both sides of inequality (\ref{eq:-24}) against $\rho_{N,\epsilon,\hbar}(t,X_{N})$
and owing to lemma (\ref{Estimate }) gives 

\[
\left|\underset{\mathbb{R}^{2}}{\int}\varphi(x)(\rho_{N:1}-\omega)(x)dx\right|
\]

\[
\leq\underset{(\mathbb{R}^{2})^{N}}{\int}\left|\underset{\mathbb{R}^{2}}{\int}\varphi(x)(\frac{1}{N}\stackrel[i=1]{N}{\sum}\delta_{x_{j}}-\omega)(x)dx\right|\rho_{N,\epsilon,\hbar}(t,X_{N})dX_{N}
\]

\[
\leq C||\varphi||_{C^{0,1}(\mathbb{R}^{2})}N^{-\frac{\lambda}{2}}
\]

\[
+C||\varphi||_{\dot{H}^{1}(\mathbb{R}^{2})}\underset{(\mathbb{R}^{2})^{N}}{\int}\left(\mathfrak{f}(X_{N},\omega)+\frac{(1+||\omega||_{L^{\infty}C^{0}})}{N}+\frac{\log N}{N}\right)^{\frac{1}{2}}\rho_{N,\epsilon,\hbar}(t,X_{N})dX_{N}
\]

\begin{equation}
\leq C||\varphi||_{C^{0,1}(\mathbb{R}^{2})}N^{-\frac{\lambda}{2}}+C||\varphi||_{\dot{H}^{1}(\mathbb{R}^{2})}\left(\mathcal{E}_{2}^{\ast}(t)+\frac{(1+||\omega||_{L^{\infty}C^{0}})}{N}+\frac{\log N}{N}\right)^{\frac{1}{2}},\label{eq:-22-1}
\end{equation}

by Cauchy Schwartz. Since $\mathcal{E}_{2}(t)+O(\epsilon)=\mathcal{E}_{2}^{\ast}(t)$
(see (\ref{eq:-34})) inequality (\ref{eq:-22-1}) is recast as 

\begin{onehalfspace}
\[
\left|\underset{\mathbb{R}^{2}}{\int}\varphi(x)(\rho_{N:1}-\omega)(x)dx\right|
\]

\begin{equation}
\leq C||\varphi||_{C^{0,1}(\mathbb{R}^{2})}N^{-\frac{\lambda}{2}}+C||\varphi||_{\dot{H}^{1}(\mathbb{R}^{2})}(\mathcal{E}_{2}(t)+O(\epsilon)+\frac{(1+||\omega||_{L^{\infty}C^{0}})}{N}+\frac{\log N}{N})^{\frac{1}{2}}.\label{eq:-23-1}
\end{equation}

\end{onehalfspace}

Since $\mathcal{E}_{1}(t)\geq0,$ theorem (\ref{Main Thm}) together
with proposition (\ref{low bound }) evidently imply $\mathcal{E}_{2}(t)\rightarrow0$
as $\frac{1}{N}+\epsilon+\hbar\rightarrow0$. Hence, the right hand
side of (\ref{eq:-23-1}) goes to $0$ as $\frac{1}{N}+\epsilon+\hbar\rightarrow0$,
which shows that $\rho_{N:1}\rightarrow\omega$ weakly as $\frac{1}{N}+\epsilon+\hbar\rightarrow0$. 

\section{\label{sec:5 OF CHAP 2} Typicality of the Assumption $\mathcal{E}_{N,\epsilon,\hbar}(0)\protect\underset{\frac{1}{N}+\epsilon+\hbar}{\rightarrow}0$}

We explain here how to construct initial data $R_{N,\epsilon,\hbar}^{in}$
which witnesses the fact that the condition $\mathcal{E}(0)\rightarrow0$
is nonempty. It will be convenient to use Toeplitz operators for the
construction of $R_{N,\epsilon,\hbar}^{in}$. We borrow some definitions
and elementary facts from {[}5{]} regarding Toeplitz operators. Let
$z=(q,p)\in\mathbb{R}^{d}\times\mathbb{R}^{d}$. For each $\hbar$
we consider the complex valued function on $\mathbb{R}^{d}$ defined
by 
\[
\left|z,\hbar\right\rangle (x)\coloneqq(\pi\hbar)^{-\frac{d}{4}}e^{-\frac{|x-q|^{2}}{2\hbar}}e^{\frac{ip\cdot x}{\hbar}}.
\]
We denote by $\left|z,\hbar\right\rangle \left\langle z,\hbar\right|:\mathfrak{H}\rightarrow\mathfrak{H}$
the orthogonal projection on the line$\left|z,\hbar\right\rangle \mathbb{C}$
in $L^{2}(\mathbb{R}^{d}\times\mathbb{R}^{d})$. For each finite/positive
Borel measure $\nu$ on $\mathbb{R}^{d}\times\mathbb{R}^{d}$ we define
the Toeplitz operator $\mathrm{OP}_{\hbar}^{T}(\nu):\mathfrak{H}\rightarrow\mathfrak{H}$
at scale $\hbar$ with symbol $\nu$ by the formula 

\[
\mathrm{OP}_{\hbar}^{T}(\nu)=\frac{1}{(2\pi\hbar)^{d}}\underset{\mathbb{R}^{d}\times\mathbb{R}^{d}}{\int}\left|z,\hbar\right\rangle \left\langle z,\hbar\right|\nu(dz).
\]

If $\nu$ is a positive measure 
\begin{equation}
\mathrm{OP}_{\hbar}^{T}(\nu)=\mathrm{OP}_{\hbar}^{T}(\nu)^{\ast}\geq0\label{eq:-16-1}
\end{equation}
 and

\begin{equation}
\mathrm{trace}(\mathrm{OP}_{\hbar}^{T}(\nu))=\frac{1}{(2\pi\hbar)^{d}}\underset{\mathbb{R}^{d}\times\mathbb{R}^{d}}{\int}\nu(dz).\label{eq:-17-1}
\end{equation}

In addition, if $\nu$ is a probability measure then $\mathrm{OP}_{\hbar}^{T}(\nu)$
is a trace class operator. We also recall the definition of the Wigner
and Husimi transforms. 
\begin{defn}
\begin{flushleft}
\label{husimi transform definition } Let $A$ be an unbounded operator
on $L^{2}(\mathbb{R}^{d})$ with integral kernel $k_{A}\in\mathcal{S}'(\mathbb{R}^{d}\times\mathbb{R}^{d})$.
The \textit{Wigner transform of scale $\hbar$ }of $A$ is the distribution
on $\mathbb{R}^{d}\times\mathbb{R}^{d}$ defined by the formula
\[
W_{\hbar}[A]\coloneqq(2\pi)^{-d}\mathcal{F}_{2}(k_{A}\circ j_{\hbar}),
\]
$\hbar$
\par\end{flushleft}
where $j_{\hbar}(x,y)\coloneqq(x+\frac{1}{2}\hbar y,x-\frac{1}{2}\hbar y)$
and $\mathcal{F}_{2}$ is the partial Fourier transform with respect
to the second variable.
\begin{flushleft}
The \textit{Husimi transform of scale $\hbar$ }is the function on
$\mathbb{R}^{d}\times\mathbb{R}^{d}$ defined by the formula 
\[
\widetilde{W_{\hbar}}[A](x,\xi)\coloneqq e^{\frac{\hbar\Delta_{(x,\xi)}}{4}}W_{\hbar}[R](x,\xi).
\]
\par\end{flushleft}
\end{defn}
\begin{rem}
\begin{flushleft}
Denoting by $G_{a}^{d}$ the centered Gaussian density on $\mathbb{R}^{d}$
with covariance matrix $aI$ we can equivalently write 
\par\end{flushleft}
\begin{flushleft}
\[
\widetilde{W_{\hbar}}[R](x,\xi)=(G_{\frac{\hbar}{2}}^{2d}\star W_{\hbar})[R].
\]
\par\end{flushleft}
\begin{flushleft}
If $\mu$ is a finite/positive Borel probability measure on $\mathbb{R}^{d}\times\mathbb{R}^{d}$
then 
\[
W_{\hbar}[\mathrm{OP}_{\hbar}^{T}(\mu)]=\frac{1}{(2\pi\hbar)^{d}}G_{\frac{\hbar}{2}}^{2d}\star\mu.
\]
\par\end{flushleft}
\begin{flushleft}
(see formula (51) in {[}5{]}). 
\par\end{flushleft}

\end{rem}
We gather a few elementary formulas in the following 
\begin{thm}
\begin{flushleft}
\textup{\label{Toeplitz formulas } ({[}5{]}, formulas (46) and (48))}
1. $\mathrm{OP}_{\hbar}^{T}(1)=I_{\mathfrak{H}}$.
\par\end{flushleft}
\begin{flushleft}
2. If $f$ is a quadratic form on $\mathbb{R}^{d}$ then 
\par\end{flushleft}

\end{thm}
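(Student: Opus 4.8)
The plan is to obtain both identities by direct Gaussian computation from the explicit form of the coherent states $\left|z,\hbar\right\rangle$, together with, for the second identity, the Wigner calculus and the convolution formula for $W_{\hbar}[\mathrm{OP}_{\hbar}^{T}(\cdot)]$ recorded in the Remark above.

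For the first identity I would compute the Schwartz kernel of $\mathrm{OP}_{\hbar}^{T}(1)$. The rank-one projection $\left|z,\hbar\right\rangle \left\langle z,\hbar\right|$ has integral kernel $(\pi\hbar)^{-d/2}e^{-|x-q|^{2}/(2\hbar)}e^{-|y-q|^{2}/(2\hbar)}e^{ip\cdot(x-y)/\hbar}$, so by definition
\[
K(x,y)=\frac{(\pi\hbar)^{-d/2}}{(2\pi\hbar)^{d}}\int_{\mathbb{R}^{d}}\int_{\mathbb{R}^{d}}e^{-|x-q|^{2}/(2\hbar)}\,e^{-|y-q|^{2}/(2\hbar)}\,e^{ip\cdot(x-y)/\hbar}\,dp\,dq .
\]
The $p$-integral produces $(2\pi\hbar)^{d}\delta(x-y)$; on its support $x=y$, and then $\int_{\mathbb{R}^{d}}(\pi\hbar)^{-d/2}e^{-|x-q|^{2}/\hbar}\,dq=1$, so $K(x,y)=\delta(x-y)$ and $\mathrm{OP}_{\hbar}^{T}(1)=I_{\mathfrak{H}}$. (Equivalently this is the standard coherent-state resolution of the identity, which one may instead verify by pairing against Schwartz functions.)

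For the second identity, let $f$ be a quadratic form (viewed as a function of the configuration variable $q$). By the Remark, $W_{\hbar}[\mathrm{OP}_{\hbar}^{T}(f)]=\frac{1}{(2\pi\hbar)^{d}}\,G_{\hbar/2}^{2d}\star f$. Since $f$ is a polynomial of degree $\le 2$, writing $(G_{a}^{n}\star f)(z)=\mathbb{E}[f(z+Y)]$ with $Y\sim\mathcal{N}(0,aI)$ and Taylor-expanding to second order gives $G_{a}^{n}\star f=f+\frac{a}{2}\Delta f$; with $a=\hbar/2$ this equals $f+\frac{\hbar}{4}\Delta f$, where $\Delta f$ is a constant. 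Hence $W_{\hbar}[\mathrm{OP}_{\hbar}^{T}(f)]=\frac{1}{(2\pi\hbar)^{d}}\big(f+\frac{\hbar}{4}\Delta f\big)$. On the other hand, a short computation straight from the definition of $W_{\hbar}$ shows that the operator of multiplication by a function $g=g(x)$ has Wigner transform $\frac{1}{(2\pi\hbar)^{d}}g$; since $W_{\hbar}$ is injective, it follows that $\mathrm{OP}_{\hbar}^{T}(f)$ is multiplication by $f+\frac{\hbar}{4}\Delta f$, i.e.\ $\mathrm{OP}_{\hbar}^{T}(f)=f(\widehat{q})+\frac{\hbar}{4}\Delta f$ (for instance $\mathrm{OP}_{\hbar}^{T}(|q|^{2})=|\widehat{q}|^{2}+\frac{\hbar d}{2}$), which is the stated formula up to the paper's normalization conventions. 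One may alternatively integrate the coherent-state Gaussians directly against $f$ and arrive at the same correction term; the argument is insensitive to whether $f$ is regarded on configuration space or as a quadratic phase-space symbol, in which case the analogous identity $\mathrm{OP}_{\hbar}^{T}(f)=\text{(Weyl quantization of }f)+\frac{\hbar}{4}\Delta f$ results.

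Neither part is a genuine obstacle — the content is elementary — and the real care lies in bookkeeping: tracking the constants $(\pi\hbar)^{-d/4}$ versus $(2\pi\hbar)^{-d}$, and, since $\mathrm{OP}_{\hbar}^{T}(1)$ and the quadratic Toeplitz operators are unbounded, reading these identities on a dense invariant domain such as $\mathcal{S}(\mathbb{R}^{d})$ (or as identities of quadratic forms on $C_{0}^{\infty}(\mathbb{R}^{d})$). The only mildly delicate point is justifying the convolution of the distribution-valued Wigner transform with the Schwartz Gaussian $G_{\hbar/2}^{2d}$ and the ensuing polynomial identity, which is legitimate because $f$ has at most polynomial growth.
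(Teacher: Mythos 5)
The paper does not prove this theorem; it is cited verbatim from Golse--Mouhot--Paul (reference [5]) as formulas (46) and (48), so there is no internal argument to compare against. Your proof is nonetheless correct and is the standard one. Part 1 is the coherent-state resolution of identity: the $p$-integral of $e^{ip\cdot(x-y)/\hbar}$ yields $(2\pi\hbar)^{d}\delta(x-y)$, and the remaining Gaussian $q$-integral normalizes to $1$. Part 2 is exactly the Wigner-transform argument underlying [5]: the Remark preceding the statement gives $W_{\hbar}[\mathrm{OP}_{\hbar}^{T}(\nu)]=(2\pi\hbar)^{-d}G_{\hbar/2}^{2d}\star\nu$, the heat-semigroup identity $G_{a}^{n}\star f=f+\tfrac{a}{2}\Delta f$ (exact for quadratic $f$ since $\Delta^{2}f\equiv 0$) gives the $\tfrac{\hbar}{4}\Delta f$ correction, and injectivity of $W_{\hbar}$ together with the fact that a multiplication operator by $g$ has Wigner transform $(2\pi\hbar)^{-d}g$ pins down the operator on the configuration side; the momentum case is symmetric. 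Your closing caveats are the right ones: these are identities on a common dense invariant core such as $\mathcal{S}(\mathbb{R}^{d})$, and one should note that the Remark's convolution formula, stated in the paper only for finite positive measures, extends to symbols of polynomial growth like $f(q)\,dq\,dp$ by the same Gaussian estimates (this is precisely what makes the Taylor expansion step legitimate). No gaps.
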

\[
\mathrm{OP}_{\hbar}^{T}(f(q)dqdp)=f(x)+\frac{1}{4}\hbar(\Delta f)I_{\mathfrak{H}},
\]

\[
\mathrm{OP}_{\hbar}^{T}(f(p)dqdp)=f(-i\hbar\partial_{x})+\frac{1}{4}\hbar(\Delta f)I_{\mathfrak{H}}.
\]

\textit{3. If $\nu$ is a finite/positive Borel measure on $\mathbb{R}^{d}\times\mathbb{R}^{d}$
then }

\[
\mathrm{trace}(\mathrm{OP}_{\hbar}^{T}(\nu)A)=\underset{\mathbb{R}^{d}\times\mathbb{R}^{d}}{\int}\widetilde{W}_{\hbar}[A](z)\nu(dz).
\]

We will need the following 
\begin{lem}
\label{formula} Let $\nu\in\mathcal{P}(\mathbb{R}^{2}\times\mathbb{R}^{2})$
have finite second moments. Then 
\[
\mathrm{trace}((-i\hbar\partial_{x^{k}}+\frac{1}{2\epsilon}(x^{\bot})^{k})^{2}\mathrm{OP}_{\hbar}^{T}((2\pi\hbar)^{2}\nu)))
\]

\[
=\underset{\mathbb{R}^{2}\times\mathbb{R}^{2}}{\int}\left|p+\frac{1}{2\epsilon}q^{\bot}\right|^{2}\nu(dqdp)+\frac{\hbar}{4\epsilon^{2}}+\frac{1}{2}d\hbar.
\]
\end{lem}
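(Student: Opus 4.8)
The plan is to reduce everything to parts 2 and 3 of Theorem \ref{Toeplitz formulas}, once the Husimi transform of the single-particle operator $B:=\sum_{k=1,2}(v^{k})^{2}$ (here $d=2$, $v^{k}=-i\hbar\partial_{x^{k}}+\frac{1}{2\epsilon}(x^{\bot})^{k}$, and $\mathrm{OP}_{\hbar}^{T}((2\pi\hbar)^{2}\nu)=\mathrm{OP}_{\hbar}^{T}((2\pi\hbar)^{d}\nu)$ is a genuine density operator by \eqref{eq:-16-1}--\eqref{eq:-17-1}) has been computed explicitly.

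First I would expand $B$. Since $x^{\bot}=(-x^{2},x^{1})$ has $\mathrm{div}\,x^{\bot}=0$, a direct computation gives
\[
B=-\hbar^{2}\Delta+\frac{1}{4\epsilon^{2}}|x|^{2}-\frac{i\hbar}{\epsilon}\bigl(x^{1}\partial_{x^{2}}-x^{2}\partial_{x^{1}}\bigr).
\]
The three summands are respectively the Weyl quantizations of the classical symbols $|p|^{2}$, $|q|^{2}$, and $q^{1}p^{2}-q^{2}p^{1}=p\cdot q^{\bot}$; crucially, the magnetic cross term carries no $\hbar$-order ordering correction, because $x^{1}$ commutes with $\partial_{x^{2}}$ and $x^{2}$ commutes with $\partial_{x^{1}}$ (equivalently, the Poisson self-bracket of the affine symbol $p^{k}+\frac{1}{2\epsilon}(q^{\bot})^{k}$ vanishes). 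Hence $B$ is the Weyl quantization of $h(q,p):=|p|^{2}+\frac{1}{\epsilon}\,p\cdot q^{\bot}+\frac{1}{4\epsilon^{2}}|q|^{2}=\bigl|p+\tfrac{1}{2\epsilon}q^{\bot}\bigr|^{2}$, and therefore $W_{\hbar}[B]=(2\pi\hbar)^{-d}h$. (One can also phrase this without Weyl calculus: applying Theorem \ref{Toeplitz formulas}(2) to $f(p)=|p|^{2}$ and $f(q)=\frac{1}{4\epsilon^{2}}|q|^{2}$ and adjoining the correction-free bilinear term writes $B=\mathrm{OP}_{\hbar}^{T}(h\,dqdp)-\frac{\hbar}{4}(\Delta h)\,I$, which is the same statement after using the Remark relating $W_{\hbar}$ of a Toeplitz operator to a Gaussian convolution.)

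Next, since $\widetilde{W}_{\hbar}[B]=e^{\hbar\Delta_{(q,p)}/4}W_{\hbar}[B]=G_{\hbar/2}^{2d}\star W_{\hbar}[B]$ and $h$ is a polynomial of degree $2$, only the first two terms of the heat semigroup survive:
\[
\widetilde{W}_{\hbar}[B]=(2\pi\hbar)^{-d}\Bigl(h+\frac{\hbar}{4}\Delta_{(q,p)}h\Bigr).
\]
A short computation (the bilinear part being harmonic) gives $\Delta_{(q,p)}h=\Delta_{p}|p|^{2}+\frac{1}{4\epsilon^{2}}\Delta_{q}|q|^{2}=2d+\frac{d}{2\epsilon^{2}}$, so that $\frac{\hbar}{4}\Delta_{(q,p)}h=\frac{d\hbar}{2}+\frac{\hbar}{4\epsilon^{2}}$ for $d=2$. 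Finally I invoke Theorem \ref{Toeplitz formulas}(3) with the finite measure $(2\pi\hbar)^{d}\nu$:
\[
\mathrm{trace}\bigl(B\,\mathrm{OP}_{\hbar}^{T}((2\pi\hbar)^{d}\nu)\bigr)=\underset{\mathbb{R}^{2}\times\mathbb{R}^{2}}{\int}\widetilde{W}_{\hbar}[B](q,p)\,(2\pi\hbar)^{d}\nu(dqdp)=\underset{\mathbb{R}^{2}\times\mathbb{R}^{2}}{\int}\Bigl(h(q,p)+\frac{d\hbar}{2}+\frac{\hbar}{4\epsilon^{2}}\Bigr)\nu(dqdp),
\]
and since $\nu$ is a probability measure the constants integrate to themselves, yielding $\int\bigl|p+\tfrac{1}{2\epsilon}q^{\bot}\bigr|^{2}\nu(dqdp)+\frac{\hbar}{4\epsilon^{2}}+\frac{1}{2}d\hbar$, which is the claim.

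The step I expect to be the main obstacle is the identification $W_{\hbar}[B]=(2\pi\hbar)^{-d}\bigl|p+\tfrac{1}{2\epsilon}q^{\bot}\bigr|^{2}$: one must verify that squaring the first-order operators $v^{k}$ produces no $\hbar$-order ordering term and, more bookkeeping-heavy, that the $(2\pi\hbar)$-normalizations in the definitions of $W_{\hbar}$, $\widetilde{W}_{\hbar}$, and $\mathrm{OP}_{\hbar}^{T}$ match up as used above. A secondary point is that $B$ is unbounded, so Theorem \ref{Toeplitz formulas}(3) must be applied to an unbounded observable; this is legitimate precisely because $\nu$ has finite second moments, which is exactly what makes the right-hand integral converge.
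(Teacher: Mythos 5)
Your proof is correct and uses essentially the same machinery as the paper (Wigner/Husimi transforms, the heat-semigroup truncation formula for polynomial symbols, and Theorem \ref{Toeplitz formulas}(3)); the only difference is organizational, in that you identify the whole operator $\sum_k(v^k)^2$ at once as the Weyl quantization of $|p+\tfrac{1}{2\epsilon}q^\bot|^2$ and compute its Husimi transform in one shot, whereas the paper computes the cross term and the two quadratic terms separately (the cross term by an explicit kernel calculation, the others via Theorem \ref{Toeplitz formulas}(2)). Both computations arrive at the same correction $\tfrac{\hbar}{4\epsilon^2}+\tfrac{1}{2}d\hbar$; your presentation is a slightly cleaner way to package the same argument.
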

\textit{Proof.} \textbf{Calculation of the cross terms}. Put 
\[
A=-i\hbar\partial_{x^{k}}\lor\frac{1}{2\epsilon}(x^{\bot})^{k},
\]

and denote by $k_{A}(x,y)$ the kernel of $A$. It is readily checked
that 

\[
k_{A}(x,y)=i\hbar(\frac{1}{\epsilon}x^{\bot})^{k}\delta'_{k}(y-x),
\]

which implies 
\[
W_{\hbar}[A](x,\xi)=(2\pi)^{-2}\mathcal{F}_{2}(k_{A}\circ j_{\hbar})=\frac{i\hbar}{\epsilon}(2\pi)^{-2}\mathcal{F}_{2}((x+\frac{1}{2}\hbar y)^{\bot k}\delta'_{k}(-\hbar y))
\]

\[
=\frac{(2\pi\hbar)^{-2}}{\epsilon}\xi_{k}(x^{\bot})^{k},
\]

hence 

\[
\widetilde{W_{\hbar}}[A](q,p)=G_{\frac{\hbar}{2}}^{2\times2}\star_{x,\xi}W_{\hbar}[A](q,p)=\frac{(2\pi\hbar)^{-2}}{\epsilon}p\cdot q^{\bot},
\]

where the last equality is due to the formula 

\begin{equation}
G_{\frac{\hbar}{2}}^{2d}\star_{x,\xi}g=\underset{0\leq n\leq\frac{m}{2}}{\sum}\frac{\hbar^{n}}{4^{n}n!}\Delta_{x,\xi}^{n}g\label{eq:-18-1}
\end{equation}

for any polynomial $g$ of degree $\leq m$. 

By 3. of theorem (\ref{Toeplitz formulas }), it follows that
\begin{equation}
\mathrm{trace}(-i\hbar\partial_{x^{k}}\lor\frac{1}{2\epsilon}(x^{\bot})^{k}\mathrm{OP}_{\hbar}^{T}((2\pi\hbar)^{2}\nu))=\underset{\mathbb{R}^{2}}{\int}\frac{1}{\epsilon}p\cdot q^{\bot}\nu(dqdp).\label{eq:-19-1}
\end{equation}

\textbf{Calculation of the dominant terms}. We utilize theorem (\ref{Toeplitz formulas })
with $f(p)=|p|^{2}$. 

\[
\mathrm{trace}(-\hbar^{2}\Delta_{x_{k}}\mathrm{OP}_{\hbar}^{T}((2\pi\hbar)^{2}\nu)))=\mathrm{trace}((\mathrm{OP}_{\hbar}^{T}(f(p))-\frac{1}{4}\hbar(\Delta f)I_{\mathfrak{H}})\mathrm{OP}_{\hbar}^{T}((2\pi\hbar)^{2}\nu)))
\]

\[
=\mathrm{trace}(\mathrm{OP}_{\hbar}^{T}(f(p))\mathrm{OP}_{\hbar}^{T}((2\pi\hbar)^{2}\nu)))-\hbar
\]

\[
=(2\pi\hbar)^{2}\underset{\mathbb{R}^{2}\times\mathbb{R}^{2}}{\int}\widetilde{W_{\hbar}}[\mathrm{OP}_{\hbar}^{T}(f)](q,p)\nu(dqdp)-\hbar.
\]

Owing to formula (\ref{eq:-18-1}) we find 

\[
(2\pi\hbar)^{2}\underset{\mathbb{R}^{2}\times\mathbb{R}^{2}}{\int}\widetilde{W_{\hbar}}[\mathrm{OP}_{\hbar}^{T}(f)](q,p)\nu(dqdp)
\]

\[
=(2\pi\hbar)^{2}\underset{\mathbb{R}^{2}\times\mathbb{R}^{2}}{\int}G_{\frac{\hbar}{2}}^{2\times2}\star_{x,\xi}W_{\hbar}[\mathrm{OP}_{\hbar}^{T}(f)](q,p)\nu(dqdp)
\]

\[
=\underset{\mathbb{R}^{2}\times\mathbb{R}^{2}}{\int}(G_{\frac{\hbar}{2}}^{2\times2}\star G_{\frac{\hbar}{2}}^{2\times2}\star f)(p)\nu(dqdp)=\underset{\mathbb{R}^{2}\times\mathbb{R}^{2}}{\int}|p|^{2}\nu(dqdp)+2\hbar,
\]

so that 
\begin{equation}
\mathrm{trace}(-\hbar^{2}\Delta_{x_{k}}\mathrm{OP}_{\hbar}^{T}((2\pi\hbar)^{2}\nu))=\underset{\mathbb{R}^{2}\times\mathbb{R}^{2}}{\int}|p|^{2}\nu(dqdp)+\hbar.\label{eq:-21-1}
\end{equation}

Next, we utilize theorem (\ref{Toeplitz formulas }) with $f(q)=|q|^{2}$.
\[
\mathrm{trace}(\frac{1}{4\epsilon^{2}}|x|^{2}\mathrm{OP}_{\hbar}^{T}((2\pi\hbar)^{2}\nu))
\]
\[
=\frac{1}{4\epsilon^{2}}\mathrm{trace}((\mathrm{OP}_{\hbar}^{T}(f(q))-\hbar I_{\mathfrak{H}})\mathrm{OP}_{\hbar}^{T}((2\pi\hbar)^{2}\nu)))
\]

\[
=\frac{1}{4\epsilon^{2}}\mathrm{trace}((\mathrm{OP}_{\hbar}^{T}(f(q))\mathrm{OP}_{\hbar}^{T}((2\pi\hbar)^{2}\nu)))-\frac{\hbar}{4\epsilon^{2}},
\]

and once again by formula (\ref{eq:-18-1})

\[
\frac{1}{4\epsilon^{2}}\mathrm{trace}(\mathrm{OP}_{\hbar}^{T}(f(q))\mathrm{OP}_{\hbar}^{T}((2\pi\hbar)^{2}\nu)))
\]

\[
=\frac{(2\pi\hbar)^{2}}{4\epsilon^{2}}\underset{\mathbb{R}^{2}\times\mathbb{R}^{2}}{\int}\widetilde{W_{\hbar}}[\mathrm{OP}_{\hbar}^{T}(f)](q,p)\nu(dqdp)
\]

\[
=\frac{1}{4\epsilon^{2}}\underset{\mathbb{R}^{2}\times\mathbb{R}^{2}}{\int}(G_{\frac{\hbar}{2}}^{2\times2}\star G_{\frac{\hbar}{2}}^{2\times2}\star f)(q,p)\nu(dqdp)=\frac{1}{4\epsilon^{2}}\underset{\mathbb{R}^{2}\times\mathbb{R}^{2}}{\int}|q|^{2}\nu(dqdp)+\frac{1\hbar}{2\epsilon^{2}}.
\]

so that 

\begin{equation}
\frac{1}{4\epsilon^{2}}\mathrm{trace}(\mathrm{OP}_{\hbar}^{T}(f(q))\mathrm{OP}_{\hbar}^{T}((2\pi\hbar)^{d}\nu))=\frac{1}{4\epsilon^{2}}\underset{\mathbb{R}^{2}\times\mathbb{R}^{2}}{\int}|q|^{2}\nu(dqdp)+\frac{\hbar}{4\epsilon^{2}}.\label{eq:-20-1}
\end{equation}

Gathering (\ref{eq:-19-1}),(\ref{eq:-21-1}) and (\ref{eq:-20-1})
produces the asserted identity. 
\begin{flushright}
$\square$
\par\end{flushright}
\begin{rem}
Considerations similar to the ones demonstrated in the proof of lemma
\ref{formula} show that the regularity condition in assumption (A)
is indeed satisfied. 
\end{rem}
We can now choose $\hbar=\hbar(\epsilon)$ such that $\frac{\hbar}{\epsilon}\rightarrow0$.
Under this assumption we can exemplify the assumption $\mathcal{E}(0)\rightarrow0$.
We proceed through the following steps 

\textbf{Step 1.} \textbf{The kinetic part. }Pick $\theta\in L^{1}(\mathbb{R}^{2})\cap L^{\infty}(\mathbb{R}^{2})$
and take 
\[
\nu\coloneqq\nu_{\epsilon}(dqdp)=\omega(q)\delta(p+\frac{1}{2\epsilon}q^{\bot}-\theta(q)),
\]
 where $\omega$ is a $C_{0}^{\infty}(\mathbb{R}^{2})$ probability
density, and let 
\[
R_{N,\epsilon,\hbar}^{in}=\mathrm{OP}_{\hbar}^{T}((2\pi\hbar)^{2}\nu)^{\otimes N}.
\]
 It is evident from (\ref{eq:-16-1})-(\ref{eq:-17-1}) that this
is indeed a density matrix. Moreover by lemma (\ref{formula}) we
have 

\[
\epsilon\mathrm{trace}(((-i\hbar\partial_{x^{k}}+\frac{1}{2\epsilon}(x^{\bot})^{k})^{2}\mathrm{OP}_{\hbar}^{T}((2\pi\hbar)^{2}\nu))=\epsilon\underset{\mathbb{R}^{2}\times\mathbb{R}^{2}}{\int}\left|p+\frac{1}{2\epsilon}q^{\bot}\right|^{2}\nu(dqdp)+\frac{\hbar}{4\epsilon}+\epsilon\hbar.
\]

Clearly, by assumption 

\[
\frac{\hbar}{4\epsilon}+\epsilon\hbar\rightarrow0.
\]

We have 

\[
\underset{\mathbb{R}^{2}\times\mathbb{R}^{2}}{\int}\left|p+\frac{1}{2\epsilon}q^{\bot}\right|^{2}\nu(dqdp)=\underset{\mathbb{R}^{2}}{\int}\omega(q)|\theta(q)|^{2}dq\leq||\varpi||_{1}||\theta||_{\infty}.
\]

In addition 

\[
|\mathrm{trace}(|u^{0}|^{2}\mathrm{OP}_{\hbar}^{T}((2\pi\hbar)^{2}\nu))|\leq||u^{0}||_{\infty}^{2},
\]

and 

\[
|\mathrm{trace}(u^{0}\lor((-i\hbar\partial_{x^{k}}+\frac{1}{2\epsilon}(x^{\bot})^{k}))\mathrm{OP}_{\hbar}^{T}((2\pi\hbar)^{2}\nu))|
\]

\[
\leq\mathrm{trace}(|u^{0}\lor(-i\hbar\partial_{x^{k}}+\frac{1}{2\epsilon}(x^{\bot})^{k})|\mathrm{OP}_{\hbar}^{T}((2\pi\hbar)^{2}\nu))
\]

\[
\leq\mathrm{trace}((|u^{0}|^{2}+|(-i\hbar\partial_{x^{k}}+\frac{1}{2\epsilon}(x^{\bot})^{k})|^{2})\mathrm{OP}_{\hbar}^{T}((2\pi\hbar)^{2}\nu))).
\]

The above inequalities evidently entail

\[
\epsilon\mathrm{trace}(((-i\hbar\partial_{x^{k}}+\frac{1}{2\epsilon}(x^{\bot})^{k}-u^{0})^{2}\mathrm{OP}_{\hbar}^{T}((2\pi\hbar)^{2}\nu))\underset{\epsilon+\hbar\rightarrow0}{\rightarrow}0.
\]

As for the quadratic term, by formulas (52) and (53) in {[}5{]} (or
equivalently by a similar calculation to the one demonstrated in lemma
(\ref{formula})) 

\[
\epsilon\mathrm{trace}(|x|^{2}\mathrm{OP}_{\hbar}^{T}((2\pi\hbar)^{d}\nu))
\]

\[
=\epsilon(1+\hbar)\underset{\mathbb{R}^{2}\times\mathbb{R}^{2}}{\int}|q|^{2}\nu(dqdp)=\epsilon(1+\hbar)\underset{\mathbb{R}^{2}}{\int}|q|^{2}\omega(q)dq\underset{\epsilon+\hbar}{\rightarrow}0.
\]

This takes care of the kinetic part. 

\textbf{Step 2.} \textbf{The interaction part. }Note that if $\rho_{N}$
is given as a pure tensor product $\rho_{N}=\rho^{\otimes N}$ then 

\[
\underset{\mathbb{R}^{2}\times\mathbb{R}^{2}}{\int}V(x-y)(\frac{N-1}{N}\rho_{N:2}(x,y)+\mu(x)\mu(y)-2\rho_{N:1}(x)\mu(y))dxdy
\]

\[
=\underset{\mathbb{R}^{2}}{\int}(\frac{N-1}{N}V\star\rho-V\star\mu)(x)\rho(x)dx+\underset{\mathbb{R}^{2}}{\int}V\star\mu(x)(\mu-\rho)(x)dx\coloneqq I+J.
\]

Let $\rho_{\epsilon,\hbar}(x)$ denote the integral kernel of $\mathrm{OP}_{\hbar}^{T}((2\pi\hbar)^{2}\nu_{\epsilon})$
evaluated at the diagonal $(x,x)$. By a straightforward calculation 

\[
\rho_{\epsilon,\hbar}(x)=\rho_{\epsilon,\hbar}(x)=\frac{1}{\pi\hbar}\underset{\mathbb{R}^{2}}{\int}e^{-\frac{|x-q|^{2}}{\hbar}}\omega(q)dq=(G_{\frac{\hbar}{2}}^{2}\star\omega)(x).
\]

\textbf{Substep 2.1.} $J\underset{\hbar+\epsilon\rightarrow0}{\rightarrow}0$.
We claim that $J\underset{\hbar+\epsilon\rightarrow0}{\rightarrow}0$.
Assuming that $\omega$ has compact support we have 
\[
|J|=\left|\underset{\mathbb{R}^{2}}{\int}V\star(\omega+\epsilon\mathfrak{U})(x)(\omega+\epsilon\mathfrak{U}-\rho)(x)dx\right|
\]

\[
\leq\left|\underset{\mathbb{R}^{2}}{\int}\omega(x)(G_{\frac{\hbar}{2}}^{2}\star V\star\omega-V\star\omega(x))(x)dx\right|
\]

\[
+\epsilon\left|\underset{\mathbb{R}^{2}}{\int}\omega(x)V\star\mathfrak{U}(x)dx\right|+\epsilon\left|\underset{\mathbb{R}^{2}}{\int}V\star\mathfrak{U}(x)(\omega+\epsilon\mathfrak{U}-G_{\frac{\hbar}{2}}^{2}\star\omega)(x)dx\right|
\]

\[
\leq\left|\underset{\mathbb{R}^{2}}{\int}\omega(x)(G_{\frac{\hbar}{2}}^{2}\star V\star\omega-V\star\omega(x))(x)dx\right|
\]

\[
+\epsilon(2||V\star\mathfrak{U}||_{\infty}+\epsilon||V\star\mathfrak{U}||_{\infty}||\mathfrak{U}||_{1}+||V\star\mathfrak{U}||_{\infty}).
\]

Clearly the terms in the second line are of order $\epsilon$. The
term in the first line is handled as follows. Decompose $V$ into
its negative and positive parts $V=V^{-}+V^{+}$. We have that 
\[
\left|\underset{\mathbb{R}^{2}}{\int}\omega(x)(G_{\frac{\hbar}{2}}^{2}\star V\star\omega-V\star\omega)(x)dx\right|
\]

\[
=\left|\underset{\mathbb{R}^{2}}{\int}V\star\omega(x)(G_{\frac{\hbar}{2}}^{2}\star\omega-\omega(x))(x)dx\right|
\]

\[
\leq\left|\underset{\mathbb{R}^{2}}{\int}V^{-}\star\omega(x)(G_{\frac{\hbar}{2}}^{2}\star\omega-\omega)(x)dx\right|
\]

\[
+\left|\underset{\mathbb{R}^{2}}{\int}V^{+}\star\omega(x)(G_{\frac{\hbar}{2}}^{2}\star\omega-\omega)(x)dx\right|.
\]

Since $V^{+}\in L^{p}(\mathbb{R}^{2})$ for all $1\leq p<\infty$,
the same is true for the convolution $V^{+}\star\omega$, and so since
$G_{\frac{\hbar}{2}}^{2}$ is an approximation to the identity we
get 

\[
\left|\underset{\mathbb{R}^{2}}{\int}V^{+}\star\omega(x)(G_{\frac{\hbar}{2}}^{2}\star\omega-\omega)dx\right|\leq||V^{+}\star\omega||_{2}\left\Vert G_{\frac{\hbar}{2}}^{2}\star\omega-\omega\right\Vert _{2}\underset{\hbar\rightarrow0}{\rightarrow}0.
\]

In addition, observing that $|V^{-}|(x)\leq|x|^{2}$, one finds 

\[
\left|\underset{\mathbb{R}^{2}}{\int}V^{-}\star\omega(x)(G_{\frac{\hbar}{2}}^{2}\star\omega-\omega)dx\right|
\]

\[
\leq\underset{\mathbb{R}^{2}}{\int}|V^{-}\star\omega|(x)|G_{\frac{\hbar}{2}}^{2}\star\omega-\omega|dx
\]

\[
\leq\underset{\mathbb{R}^{2}}{\int}|\cdot|^{2}\star\omega(x)|G_{\frac{\hbar}{2}}^{2}\star\omega-\omega|dx.
\]

Now pick $\omega(x)=\frac{1}{\Lambda}\chi G_{\frac{1}{2}}^{2}(x)$
where $\chi\in C_{0}^{\infty}(\mathbb{R}^{2})$ with $0\leq\chi\leq1$
and $\Lambda\coloneqq||\chi G_{\frac{1}{2}}^{2}||_{1}$. We split
the last integral as 

\[
\underset{\mathbb{R}^{2}}{\int}|\cdot|^{2}\star\omega(x)|G_{\frac{\hbar}{2}}^{2}\star\omega-\omega|dx
\]

\[
=\underset{|x|\leq\hbar^{-\beta}}{\int}|\cdot|^{2}\star\omega(x)|G_{\frac{\hbar}{2}}^{2}\star\omega-\omega|dx+\underset{|x|>\hbar^{-\beta}}{\int}|\cdot|^{2}\star\omega(x)|G_{\frac{\hbar}{2}}^{2}\star\omega-\omega|dx.
\]

Owing to formula (\ref{eq:-18-1}), the first integral is mastered
as follows 

\[
\underset{|x|\leq\hbar^{-\beta}}{\int}|\cdot|^{2}\star\omega(x)|G_{\frac{\hbar}{2}}^{2}\star\omega-\omega|dx\leq\frac{1}{\Lambda}\underset{|x|\leq\hbar^{-\beta}}{\int}|\cdot|^{2}\star G_{\frac{1}{2}}^{2}(x)|G_{\frac{\hbar}{2}}^{2}\star\omega-\omega|dx
\]

\[
=\frac{1}{\Lambda}\underset{|x|\leq\hbar^{-\beta}}{\int}(|x|^{2}+1)|G_{\frac{\hbar}{2}}^{2}\star\omega-\omega|dx\leq\frac{1}{\Lambda}\left(\underset{|x|\leq\hbar^{-\beta}}{\int}(|x|^{2}+1)^{2}dx\right)^{\frac{1}{2}}||G_{\frac{\hbar}{2}}^{2}\star\omega-\omega||_{L^{2}(\mathbb{R}^{2})}
\]

\begin{equation}
\leq\frac{1}{\Lambda}\left(\frac{(\hbar^{-2\beta}+1)^{3}-1}{6}\right)^{\frac{1}{2}}||G_{\frac{\hbar}{2}}^{2}\star\omega-\omega||_{L^{2}(\mathbb{R}^{2})}.\label{eq:integral near origin}
\end{equation}

By Plancheral the right hand side of equation (\ref{eq:integral near origin})
is

\[
\leq\frac{2}{\Lambda}\hbar^{-3\beta}||\widehat{G_{\frac{\hbar}{2}}^{2}}\widehat{\omega}-\widehat{\omega}||_{2}=\frac{2}{\Lambda}\hbar^{-3\beta}||e^{\frac{-\hbar|\xi|^{2}}{4}}\widehat{\omega}-\widehat{\omega}||_{2}
\]

\[
\leq\frac{2}{\Lambda}\hbar^{-3\beta}\left\Vert \frac{\hbar|\xi|^{2}}{4}\widehat{\omega}\right\Vert _{2}=\frac{1}{2\Lambda}\hbar^{1-3\beta}\left\Vert |\xi|^{2}\widehat{\omega}\right\Vert _{2},
\]

where the last inequality is thanks to the elementary inequality $1-e^{-r}\leq r$
for $r\in[0,\infty)$. Taking $0<\beta<\frac{1}{3}$ yields 

\[
\underset{\mathbb{R}^{2}}{\int}|\cdot|^{2}\star\omega(x)|G_{\frac{\hbar}{2}}^{2}\star\omega-\omega|dx\underset{\hbar\rightarrow0}{\rightarrow}0.
\]

Finally, the formula for convolution of Gaussians gives 
\[
G_{\frac{\hbar}{2}}^{2}\star G_{\frac{1}{2}}^{2}=G_{\frac{\hbar+1}{2}}^{2}
\]

so that

\[
\underset{|x|>\hbar^{-\beta}}{\int}|\cdot|^{2}\star\omega(x)|G_{\frac{\hbar}{2}}^{2}\star\omega-\omega|dx
\]

\[
\leq\underset{|x|>\hbar^{-\beta}}{\int}|\cdot|^{2}\star\omega(x)(G_{\frac{\hbar}{2}}^{2}\star\omega+\omega)dx\leq\underset{|x|>\hbar^{-\beta}}{\int}|\cdot|^{2}\star\omega(x)(\frac{1}{\Lambda}G_{\frac{\hbar}{2}}^{2}\star G_{\frac{1}{2}}^{2}+\omega)dx
\]

\[
=\underset{|x|>\hbar^{-\beta}}{\int}|\cdot|^{2}\star\omega(x)(\frac{1}{\Lambda}G_{\frac{\hbar+1}{2}}^{2}+\omega)dx
\]

\[
\leq\frac{1}{\Lambda}\underset{|x|>\hbar^{-\beta}}{\int}(1+|x|^{2})(\frac{1}{\Lambda}G_{1}^{2}+\omega)dx\underset{\hbar\rightarrow0}{\rightarrow}0
\]

as a tail of a converging integral. 

\textbf{Substep 2.2.} We utilize the formula $G_{\frac{\hbar}{2}}^{2}\star G_{\frac{\hbar}{2}}^{2}=G_{\hbar}^{2}$
to find 

\[
|I|=\left|\underset{\mathbb{R}^{2}}{\int}(\frac{N-1}{N}G_{\frac{\hbar}{2}}^{2}\star(V\star G_{\frac{\hbar}{2}}^{2}\star\omega)-G_{\frac{\hbar}{2}}^{2}\star(V\star\mu))(x)\omega(x)dx\right|
\]

\[
\leq\left|\underset{\mathbb{R}^{2}}{\int}(\frac{N-1}{N}G_{\hbar}^{2}\star(V\star\omega)-G_{\frac{\hbar}{2}}^{2}\star(V\star\omega))\omega(x)dx\right|
\]

\[
\text{+}\epsilon\left|\underset{\mathbb{R}^{2}}{\int}(V\star\mathfrak{U})(x)\rho(x)dx\right|
\]

\[
\leq\frac{N-1}{N}\left|\underset{\mathbb{R}^{2}}{\int}(G_{\hbar}^{2}\star(V\star\omega)-G_{\frac{\hbar}{2}}^{2}\star(V\star\omega))\omega(x)dx\right|
\]

\[
+\frac{1}{N}\left|\underset{\mathbb{R}^{2}}{\int}G_{\frac{\hbar}{2}}^{2}\star(V\star\omega)\omega(x)dx\right|+\epsilon||V\star\mathfrak{U}||_{\infty}.
\]

The first term is recognized as a Cauchy difference (by substep 2.1)
and therefore vanishes as $\hbar\rightarrow0$, while the second and
third terms are of order $\frac{1}{N}$ and $\epsilon$ respectively,
because $\left|\underset{\mathbb{R}^{2}}{\int}G_{\frac{\hbar}{2}}^{2}\star(V\star\omega)\omega(x)dx\right|$
is uniformly bounded in $\hbar$ (again by substep 2.1).

\section{Essential self-adjointness of the Hamiltonian. Conservation of energy}

We elaborate on the self-adjointness of the quantum Hamiltonian and
give a proof of lemma (\ref{ENERGY CON}). Recall the notation $A(x)\coloneqq\frac{1}{2\epsilon}x^{\bot}$
and the operators defined by the formulas 
\[
\mathscr{K}_{N}\coloneqq\frac{1}{2}\stackrel[j=1]{N}{\sum}(-i\hbar\nabla_{x_{j}}+A(x_{j}))^{2}+\frac{1}{2}\stackrel[j=1]{N}{\sum}|x_{j}|^{2}\coloneqq K_{N}+\frac{1}{2}\stackrel[j=1]{N}{\sum}|x_{j}|^{2},
\]

\begin{equation}
\mathscr{V}_{N}\coloneqq\frac{1}{N\epsilon}\underset{p<q}{\sum}V_{pq}\label{N BODY MULTI OPERATOR}
\end{equation}

on $C_{0}^{\infty}(\mathbb{R}^{2N})$. For each fixed $1\leq p\leq N$
and $k=1,2$ denote by $\Pi^{p,k}$ the closure of the operator $-i\hbar\partial_{x_{p}^{k}}+A^{k}(x_{p})$
in $C_{0}^{\infty}(\mathbb{R}^{2N})$ (it is closable as a symmetric
operator). With slight abuse of notation, denote by $K_{N}$ the operator 

\[
K_{N}\coloneqq\frac{1}{2}\stackrel[p=1]{N}{\sum}\underset{k}{\sum}(\Pi^{p,k})^{\ast}\Pi^{p,k},
\]

which is self-adjoint according to theorem X.25 in {[}15{]}. Equivalently\footnote{This equivalence is shown in {[}24{]} }
we may view $K_{N}$ (and $\mathscr{K}_{N}$) as unbounded self-adjoint
operators through the use of the terminology of quadratic forms, which
we now briefly review following section 2.3 in {[}23{]}. Given a
non-negative quadratic form $q:\mathfrak{Q}\rightarrow\mathbb{C}$
($\mathfrak{Q}$ is a dense subspace of the underlying Hilbert space
$\mathfrak{H}$) corresponding to a sesquilinear form $s:\mathfrak{Q}\times\mathfrak{Q}\rightarrow\mathbb{C}$
define a scalar product $\left\langle \cdot,\cdot\right\rangle _{q}:\mathfrak{Q}\times\mathfrak{Q}\rightarrow\mathbb{C}$
by letting 

\[
\left\langle \psi,\varphi\right\rangle _{q}\coloneqq s(\psi,\varphi)+\left\langle \psi,\varphi\right\rangle .
\]

The norm inherited from this scalar product is denoted $||\cdot||_{q}$
(i.e. $||\varphi||_{q}^{2}\coloneqq q(\varphi)+||\varphi||^{2}$).
If $q$ is closable, the completion of $\mathfrak{Q}$ with respect
to $||\cdot||_{q}$ is a subspace of $\mathfrak{H}$ denoted $\mathfrak{H}_{q}$. 
\begin{thm}
Let $q$ be a non-negative closed quadratic form on $\mathfrak{Q}$
corresponding to a sesquilinear form $s$. There is a unique self-adjoint
operator $T$ with form domain $\mathfrak{Q}(T)$ such that $\mathfrak{Q}=\mathfrak{Q}(T)$
and $q=q_{T}$. In addition 

\[
D(T)=\{\psi\in\mathfrak{H}_{q}|\exists\widetilde{\psi}\in\mathfrak{H}\forall\varphi\in\mathfrak{H}_{q}:s(\varphi,\psi)=\left\langle \varphi,\widetilde{\psi}\right\rangle \}
\]

\[
T\psi=\widetilde{\psi}.
\]
\end{thm}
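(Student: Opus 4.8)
The plan is to realize $T$ as $B^{-1}-I$, where $B$ is the resolvent-type operator extracted from the Riesz representation theorem applied \emph{inside} the form Hilbert space $\mathfrak{H}_{q}$, and then to check that this $T$ has the two advertised properties. First I would set up $\mathfrak{H}_{q}$: since $q$ is non-negative, $\langle\cdot,\cdot\rangle_{q}$ is a genuine scalar product, and the hypothesis that $q$ is closed says precisely that $(\mathfrak{H}_{q},\|\cdot\|_{q})=(\mathfrak{Q},\|\cdot\|_{q})$ is complete; the bound $\|\varphi\|\le\|\varphi\|_{q}$ makes the inclusion $\mathfrak{H}_{q}\hookrightarrow\mathfrak{H}$ a contraction with dense range. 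For each $f\in\mathfrak{H}$ the functional $\varphi\mapsto\langle\varphi,f\rangle$ on $\mathfrak{H}_{q}$ is bounded, because $|\langle\varphi,f\rangle|\le\|\varphi\|\,\|f\|\le\|\varphi\|_{q}\|f\|$, so the Riesz representation theorem in $\mathfrak{H}_{q}$ produces a unique $Bf\in\mathfrak{H}_{q}$ with $\langle\varphi,f\rangle=\langle\varphi,Bf\rangle_{q}$ for all $\varphi\in\mathfrak{H}_{q}$; the map $B\colon\mathfrak{H}\to\mathfrak{H}_{q}$ is linear with $\|Bf\|_{q}\le\|f\|$, hence extends to a bounded operator on $\mathfrak{H}$ with $\|B\|\le 1$.

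Next I would record the structural properties of $B$. Choosing $\varphi=Bg$ in the defining identity and using symmetry of $\langle\cdot,\cdot\rangle_{q}$ gives $\langle Bf,g\rangle=\langle Bf,Bg\rangle_{q}=\langle f,Bg\rangle$, so $B$ is self-adjoint; $\langle Bf,f\rangle=\|Bf\|_{q}^{2}\ge 0$, so $0\le B\le I$; $Bf=0$ forces $\langle\varphi,f\rangle=0$ for all $\varphi\in\mathfrak{H}_{q}$, hence $f=0$ by density, so $B$ is injective; and $\langle\varphi,Bf\rangle_{q}=\langle\varphi,f\rangle$ for all $f$ forces $\varphi=0$, so $\operatorname{Ran}(B)$ is dense in $\mathfrak{H}_{q}$. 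Since $B$ is injective, bounded, self-adjoint and satisfies $0\le B\le I$, the operator $B^{-1}$ with domain $\operatorname{Ran}(B)$ is self-adjoint and $B^{-1}\ge I$; I then set $T:=B^{-1}-I$ on $D(T):=\operatorname{Ran}(B)$, which is self-adjoint and non-negative by construction, i.e.\ $B=(I+T)^{-1}$.

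Finally I would match $T$ to $q$. For $\psi=Bf\in D(T)$ and $\varphi\in\mathfrak{H}_{q}$, unwinding $\langle\varphi,f\rangle=\langle\varphi,Bf\rangle_{q}=s(\varphi,\psi)+\langle\varphi,\psi\rangle$ gives $s(\varphi,\psi)=\langle\varphi,f-\psi\rangle=\langle\varphi,T\psi\rangle$; conversely, if $\psi\in\mathfrak{H}_{q}$ and $\widetilde{\psi}\in\mathfrak{H}$ satisfy $s(\varphi,\psi)=\langle\varphi,\widetilde{\psi}\rangle$ for all $\varphi\in\mathfrak{H}_{q}$, then $\langle\varphi,\psi\rangle_{q}=\langle\varphi,\widetilde{\psi}+\psi\rangle$, whence $\psi=B(\widetilde{\psi}+\psi)\in D(T)$ and $\widetilde{\psi}=T\psi$. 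This is exactly the displayed description of $D(T)$ together with $T\psi=\widetilde{\psi}$. Taking $\varphi=\psi$ gives $q(\psi)=s(\psi,\psi)=\langle\psi,T\psi\rangle=q_{T}(\psi)$ on $D(T)$; since $\operatorname{Ran}(B)=D(T)$ is dense in $\mathfrak{H}_{q}$ and the $\|\cdot\|_{q}$-norm on it equals the form norm $\psi\mapsto(q_{T}(\psi)+\|\psi\|^{2})^{1/2}$, the two completions coincide, so $\mathfrak{Q}(T)=\mathfrak{H}_{q}=\mathfrak{Q}$ and $q_{T}=q$ by continuity. For uniqueness, if $T'$ is self-adjoint with $\mathfrak{Q}(T')=\mathfrak{H}_{q}$ and $q_{T'}=q$, the standard characterization of $D(T')$ inside its form domain gives $s(\varphi,\psi)=\langle\varphi,T'\psi\rangle$ for $\psi\in D(T')$, $\varphi\in\mathfrak{H}_{q}$, so $T'$ obeys the same defining formula as $T$; hence $T'\subseteq T$, and since a self-adjoint operator admits no proper symmetric extension, $T'=T$.

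The step I expect to need the most care is the identification $\mathfrak{Q}(T)=\mathfrak{H}_{q}$: one must argue that $D(T)=\operatorname{Ran}(B)$ is not merely a dense subspace of $\mathfrak{H}_{q}$ but a form core for $T$ on which the abstract form norm and $\|\cdot\|_{q}$ literally coincide, so that the two completions are the same subspace of $\mathfrak{H}$ with the same norm; equivalently one may phrase this as $\mathfrak{Q}(T)=D((I+T)^{1/2})=\operatorname{Ran}(B^{1/2})$ and prove $\operatorname{Ran}(B^{1/2})=\mathfrak{H}_{q}$ by the same density-plus-isometry argument. The only other delicate point is verifying that $0$ is not an eigenvalue of $B$ (which follows from injectivity) so that $B^{-1}$, and hence $T$, is well defined and non-negative.
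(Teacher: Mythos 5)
The paper does not prove this theorem: it is quoted as a background result from Teschl's book (reference [23], Section 2.3), so there is no in-paper proof to compare against. Your blind proof is nevertheless correct, and it is the standard Riesz-representation argument that Teschl himself uses: one builds $B=(I+T)^{-1}$ by representing the bounded functional $\varphi\mapsto\langle\varphi,f\rangle$ in the form Hilbert space $(\mathfrak{H}_{q},\langle\cdot,\cdot\rangle_{q})$, checks $B=B^{\ast}$, $0\le B\le I$, $B$ injective with $\mathrm{Ran}(B)$ dense in $\mathfrak{H}_{q}$, and sets $T:=B^{-1}-I$. The verifications you give of the displayed description of $D(T)$, of the identity $s(\varphi,\psi)=\langle\varphi,T\psi\rangle$, and of uniqueness (via $T'\subseteq T$ and maximality of self-adjoint operators) are all sound. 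Your flagged delicate point is exactly the right one: one must argue that $\mathrm{Ran}(B)=D(T)$ is a form core on which the abstract form norm of $T$ literally equals $\|\cdot\|_{q}$, so that the two completions, viewed inside $\mathfrak{H}$, coincide; equivalently $\mathfrak{Q}(T)=D((I+T)^{1/2})=\mathrm{Ran}(B^{1/2})=\mathfrak{H}_{q}$, and your density-plus-isometry argument handles this correctly. The only step stated a bit quickly is that $B^{-1}$ is self-adjoint on $\mathrm{Ran}(B)$: this does hold for any bounded, self-adjoint, injective operator (its graph is the flip of a closed graph, and symmetry plus $\mathrm{Ran}(B)$ dense gives self-adjointness by a short computation with adjoints), but it deserves a sentence rather than being folded into ``by construction.''
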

Consider the sesquilinear form 
\[
S(\varphi_{N},\psi_{N})\coloneqq\frac{1}{2}\stackrel[j=1]{N}{\sum}\underset{\mathbb{R}^{2N}}{\int}\overline{(-i\hbar\nabla_{x_{j}}+A(x_{j}))\varphi_{N}}(-i\hbar\nabla_{x_{j}}+A(x_{j}))\psi_{N}dX_{N}
\]
on $C_{0}^{\infty}(\mathbb{R}^{2N})\times C_{0}^{\infty}(\mathbb{R}^{2N})$
whose corresponding quadratic form is $Q$. The completion of $C_{0}^{\infty}(\mathbb{R}^{2N})$
with respect to $||\cdot||_{Q}$ is denoted $\mathfrak{Q}$ and the
extension of $Q$ to $\mathfrak{Q}$ is denoted $q$. With slight
abuse of notation, denote by $K_{N}$ the self-adjoint operator arising
via the above theorem with $\mathfrak{Q}$ and $q$ as defined in
the previous line. Similarly (with slight abuse of notation) $\mathscr{K}_{N}$
is the self-adjoint operator arising from the form 
\[
\mathscr{S}(\varphi_{N},\psi_{N})\coloneqq\frac{1}{2}\stackrel[j=1]{N}{\sum}\underset{\mathbb{R}^{2N}}{\int}\overline{(-i\hbar\nabla_{x_{j}}+A(x_{j}))\varphi_{N}}(-i\hbar\nabla_{x_{j}}+A(x_{j}))\psi_{N}dX_{N}+\frac{1}{2}\stackrel[j=1]{N}{\sum}\underset{\mathbb{R}^{2N}}{\int}\overline{\varphi_{N}}|x_{j}|^{2}\psi_{N}dX_{N}
\]

with $\mathfrak{Q}$ being the completion of $C_{0}^{\infty}(\mathbb{R}^{2N})$
with respect to $||\cdot||_{\mathscr{Q}}$, where $\mathscr{Q}$ is
the quadratic form corresponding to $\mathscr{S}$. Since both $K_{N}$
and$\mathscr{K}_{N}$ (as operators on $C_{0}^{\infty}(\mathbb{R}^{2N})$)
are known to be essentially self-adjoint (theorem (1.1) in {[}21{]}),
the above self-adjoint extensions are in fact unique. The Kato-Rellich
perturbation theory for self-adjoint operators is the most fundamental
tool used for the purpose of viewing the quantum Hamiltonian, which
is a perturbation of $\mathscr{K}_{N}$, as an unbounded self-adjoint
operator. 
\begin{thm}
\label{Kato's theorem } Let $T,D(T)\subset\mathfrak{H}$ be a (essentially)
self-adjoint operator and $S,D(S)\subset\mathfrak{H}$ a symmetric
operator such that $D(T)\subset D(S)$. Suppose there exist $0<a<1,b>0$
such that for each $\varphi\in D(T)$ 
\begin{equation}
||S\varphi||^{2}\leq a||T\varphi||^{2}+b||\varphi||^{2}.\label{eq:-25}
\end{equation}
Then $T+S,D(T+S)=D(T)$ is (essentially) self-adjoint. In the case
where $T$ is essentially self-adjoint we have $D(\overline{T})\subset D(\overline{S})$
and $\overline{T+S}=\overline{T}+\overline{S}$. 
\end{thm}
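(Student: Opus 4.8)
The plan is to reduce everything to the basic range criterion for self-adjointness: a densely defined symmetric operator $A$ is self-adjoint iff $\mathrm{Ran}(A\pm i\mu)=\mathfrak{H}$ for some $\mu>0$, and essentially self-adjoint iff these two ranges are merely dense. First I would check that $T+S$ with domain $D(T)$ is densely defined and symmetric: density is inherited from $D(T)$, and symmetry follows from that of $T$ and $S$ together with $D(T)\subseteq D(S)$, so $\langle(T+S)\varphi,\psi\rangle=\langle\varphi,(T+S)\psi\rangle$ for $\varphi,\psi\in D(T)$.

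For the self-adjoint case I would fix $\mu>0$ and use the factorization
\[
T+S\pm i\mu=\bigl(I+S(T\pm i\mu)^{-1}\bigr)(T\pm i\mu),
\]
legitimate because $T$ self-adjoint makes $(T\pm i\mu)^{-1}$ bounded on $\mathfrak{H}$ with range $D(T)\subseteq D(S)$, and $T\pm i\mu\colon D(T)\to\mathfrak{H}$ a bijection. Thus $\mathrm{Ran}(T+S\pm i\mu)=\mathfrak{H}$ as soon as $I+S(T\pm i\mu)^{-1}$ is boundedly invertible, for which $\|S(T\pm i\mu)^{-1}\|<1$ suffices (Neumann series). The quantitative heart is the identity $\|(T\pm i\mu)\psi\|^{2}=\|T\psi\|^{2}+\mu^{2}\|\psi\|^{2}$ for $\psi\in D(T)$ (the cross terms cancel by symmetry of $T$); feeding this into the relative bound (\ref{eq:-25}) gives $\|S\psi\|^{2}\le(a+b\mu^{-2})\|(T\pm i\mu)\psi\|^{2}$, hence $\|S(T\pm i\mu)^{-1}\|\le(a+b\mu^{-2})^{1/2}$. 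Since $a<1$, picking $\mu$ with $a+b\mu^{-2}<1$ finishes this case, so $T+S$ is self-adjoint on $D(T)$.

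For the essentially self-adjoint case I would first transfer the relative bound to the closures. For $\psi\in D(\overline{T})$ choose $\psi_{n}\in D(T)$ with $\psi_{n}\to\psi$, $T\psi_{n}\to\overline{T}\psi$; applying (\ref{eq:-25}) to $\psi_{n}-\psi_{m}$ shows $(S\psi_{n})$ is Cauchy, and as $S$ is symmetric hence closable with closure $\overline{S}$, this yields $\psi\in D(\overline{S})$, $\overline{S}\psi=\lim_{n}S\psi_{n}$, and $\|\overline{S}\psi\|^{2}\le a\|\overline{T}\psi\|^{2}+b\|\psi\|^{2}$; in particular $D(\overline{T})\subseteq D(\overline{S})$. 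Now $\overline{T},\overline{S}$ satisfy the hypotheses of the self-adjoint case, so $\overline{T}+\overline{S}$ is self-adjoint on $D(\overline{T})$. To see $\overline{T+S}=\overline{T}+\overline{S}$: from $T+S\subseteq\overline{T}+\overline{S}$ and closedness of the right side, $\overline{T+S}\subseteq\overline{T}+\overline{S}$; conversely, for $\psi\in D(\overline{T})$ the sequence above satisfies $(T+S)\psi_{n}\to\overline{T}\psi+\overline{S}\psi$, so $(\psi,(\overline{T}+\overline{S})\psi)$ lies in the graph closure of $T+S$, giving the reverse inclusion.

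The one point I expect to require care is the interface between ``dense range'' (for essential self-adjointness) and ``full range'' (for self-adjointness): running the whole argument through the self-adjoint closures $\overline{T}$, $\overline{S}$ avoids any ambiguity about whether $I+S(T\pm i\mu)^{-1}$ has dense versus full range, and concentrates the genuine analytic content in the single elementary estimate $\|S(T\pm i\mu)^{-1}\|\le(a+b\mu^{-2})^{1/2}$. Everything else is routine bookkeeping with operator domains and graph closures.
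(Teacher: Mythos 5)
The paper does not prove this theorem itself; it is the classical Kato--Rellich perturbation theorem, cited from {[}11{]}--{[}13{]} and Reed--Simon {[}15{]}. Your proof is the standard textbook argument for that theorem (deficiency-index/range criterion, the factorization $T+S\pm i\mu=(I+S(T\pm i\mu)^{-1})(T\pm i\mu)$, the identity $\|(T\pm i\mu)\psi\|^{2}=\|T\psi\|^{2}+\mu^{2}\|\psi\|^{2}$, and a Neumann series), correctly adapted to the squared form of the relative bound used in the paper, and the passage to closures for the essentially self-adjoint case is handled cleanly and completely, including both asserted conclusions $D(\overline{T})\subset D(\overline{S})$ and $\overline{T+S}=\overline{T}+\overline{S}$. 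I find no gaps.
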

If $T,S$ satisfy the condition (\ref{eq:-25}) for some $a>0$ and
$b>0$, we say that $S$ is $T$-bounded. The infimum over all $a>0$
for which there exist a $b>0$ such that (\ref{eq:-25}) holds is
called the relative bound of $S$. The theorem stated below, which
is one of the central conclusions from Kato-Rellich theory, shows
that the self-adjointness of the $N$-body Laplacian is invariant
under a perturbation by a symmetric $N$-body potential satisfying
a certain integrability condition. 
\begin{thm}
\label{relative bound for Kato potentials } Let $V\in L^{\infty}(\mathbb{R}^{2})+L^{2}(\mathbb{R}^{2})$
be a real valued function. Let $\mathscr{V}_{N}$ be the multiplication
operator defined by the formula (\ref{N BODY MULTI OPERATOR}) with
domain $D(\mathscr{V}_{N})=H^{2}(\mathbb{R}^{2N})$. Let 

\[
-\Delta_{N,\hbar}\coloneqq-\frac{\hbar^{2}}{2}\stackrel[j=1]{N}{\sum}\Delta_{x_{j}}
\]
 viewed as a self-adjoint operator with domain $H^{2}(\mathbb{R}^{2N})$.
Then $\mathscr{V}_{N}$ is $-\Delta_{N,\hbar}$-bounded with relative
bound $0$. Consequently, $-\Delta_{N,\hbar}+\mathscr{V}_{N}$ is
self-adjoint on $H^{2}(\mathbb{R}^{2N})$. 
\end{thm}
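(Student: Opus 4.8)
The plan is to apply the Kato--Rellich theorem (Theorem~\ref{Kato's theorem }) with $T=-\Delta_{N,\hbar}$, which is self-adjoint on $H^{2}(\mathbb{R}^{2N})$ as recalled in the statement, and $S=\mathscr{V}_{N}=\frac{1}{N\epsilon}\sum_{p<q}V_{pq}$, which is symmetric on $H^{2}(\mathbb{R}^{2N})$ because $V$ is real-valued. Everything then reduces to the single estimate
\[
\|\mathscr{V}_{N}\varphi\|_{L^{2}(\mathbb{R}^{2N})}^{2}\le a\,\|\Delta_{N,\hbar}\varphi\|_{L^{2}(\mathbb{R}^{2N})}^{2}+b(a)\,\|\varphi\|_{L^{2}(\mathbb{R}^{2N})}^{2},\qquad \varphi\in H^{2}(\mathbb{R}^{2N}),
\]
to be proved for \emph{every} $a>0$ (with $b(a)$ allowed to depend also on $N,\epsilon,\hbar$). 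Since $\mathscr{V}_{N}$ is a finite sum, by the triangle inequality it suffices to prove, for each fixed pair $p<q$ and each $a>0$, the analogous bound for $V_{pq}=V(x_{p}-x_{q})$; summing over the $\binom{N}{2}$ pairs and relabelling constants gives the claim, and any single value of $a$ simultaneously justifies the domain assignment $D(\mathscr{V}_{N})=H^{2}(\mathbb{R}^{2N})$. Having such a bound with $a$ arbitrarily small is exactly the statement that $\mathscr{V}_{N}$ has $-\Delta_{N,\hbar}$-relative bound $0$, so Theorem~\ref{Kato's theorem } yields self-adjointness of $-\Delta_{N,\hbar}+\mathscr{V}_{N}$ on $H^{2}(\mathbb{R}^{2N})$.

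The per-pair estimate rests on two ingredients. First, a Sobolev-type inequality on $\mathbb{R}^{2}$ carrying a free parameter: splitting $\widehat{\psi}$ into $|\xi|\le R$ and $|\xi|>R$ and using Cauchy--Schwarz together with $\int_{|\xi|\le R}d\xi=\pi R^{2}$ and $\int_{|\xi|>R}|\xi|^{-4}d\xi=\pi R^{-2}$, one obtains
\[
\|\psi\|_{L^{\infty}(\mathbb{R}^{2})}^{2}\lesssim R^{2}\|\psi\|_{L^{2}(\mathbb{R}^{2})}^{2}+R^{-2}\|\Delta\psi\|_{L^{2}(\mathbb{R}^{2})}^{2},\qquad R>0,\ \psi\in H^{2}(\mathbb{R}^{2}),
\]
so the weight on $\|\Delta\psi\|_{2}^{2}$ can be made as small as desired at the cost of enlarging the weight on $\|\psi\|_{2}^{2}$ --- this is the borderline two-dimensional embedding $H^{2}\hookrightarrow L^{\infty}$ made quantitative. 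Second, a linear change of variables on $\mathbb{R}^{2N}$ with constant Jacobian turning $x_{p}-x_{q}$ into a single coordinate $u\in\mathbb{R}^{2}$ and collecting the remaining $2N-1$ coordinates into $w\in\mathbb{R}^{2(N-1)}$; under it $V_{pq}$ becomes multiplication by $V(u)$ and $L^{2}$ norms are preserved up to a harmless constant.

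With these in hand I would split $V=V_{\infty}+V_{2}$ with $V_{\infty}\in L^{\infty}(\mathbb{R}^{2})$, $V_{2}\in L^{2}(\mathbb{R}^{2})$. The bounded part contributes only $\|V_{\infty}\|_{\infty}\|\varphi\|_{2}$. For the square-integrable part, by Fubini $\varphi(\cdot,w)\in H^{2}(\mathbb{R}^{2}_{u})$ for a.e.\ $w$, and
\[
\int_{\mathbb{R}^{2}}|V_{2}(u)|^{2}|\varphi(u,w)|^{2}\,du\le \|V_{2}\|_{L^{2}}^{2}\,\|\varphi(\cdot,w)\|_{L^{\infty}_{u}}^{2};
\]
inserting the slice inequality in the variable $u$ and integrating over $w$ gives
\[
\|V_{2}(u)\varphi\|_{L^{2}(\mathbb{R}^{2N})}^{2}\lesssim \|V_{2}\|_{L^{2}}^{2}\Big(R^{2}\|\varphi\|_{L^{2}}^{2}+R^{-2}\|\Delta_{u}\varphi\|_{L^{2}}^{2}\Big).
\]
Finally $\Delta_{u}$ is a constant-coefficient second-order operator in the new coordinates, hence a fixed linear combination of second-order derivatives in the original $x_{j}$, so Plancherel gives $\|\Delta_{u}\varphi\|_{L^{2}}\le C_{N}\|\sum_{j}\Delta_{x_{j}}\varphi\|_{L^{2}}=C_{N}\tfrac{2}{\hbar^{2}}\|\Delta_{N,\hbar}\varphi\|_{L^{2}}$. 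Combining, summing over pairs, and then choosing $R$ large enough produces the displayed bound with the prescribed arbitrarily small $a$, i.e.\ relative bound $0$.

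I do not anticipate a genuine obstacle; the one point that needs attention --- and that I would make explicit --- is that $V_{pq}$, viewed as a potential on the full configuration space $\mathbb{R}^{2N}$, is \emph{not} in $L^{2}(\mathbb{R}^{2N})+L^{\infty}(\mathbb{R}^{2N})$ (it does not decay transversally to $x_{p}-x_{q}$), so the textbook $L^{2}+L^{\infty}$ Kato criterion cannot be invoked directly; the change of variables reducing matters to a genuine two-dimensional slice, on which $V$ really does lie in $L^{2}+L^{\infty}$, is exactly what repairs this, and it is the reason the dimension being $2$ enters. This also clarifies why this lemma alone does not settle the Coulomb case $V(x)=-\frac{1}{2\pi}\log|x|$: that $V$ fails to belong to $L^{2}(\mathbb{R}^{2})+L^{\infty}(\mathbb{R}^{2})$ because of its growth at infinity, which is precisely the role of the auxiliary quadratic confinement treated separately in this section.
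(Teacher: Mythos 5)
Your proof is correct, but there is nothing in the paper to compare it against: Theorem~\ref{relative bound for Kato potentials } is stated without proof, cited as a standard consequence of Kato--Rellich perturbation theory (the two-dimensional analogue of the classical Kato potential result in Reed--Simon, Vol.~II). What you have written out is essentially the textbook argument, and it is sound. The two ingredients you isolate are exactly the right ones: the parametrized Sobolev estimate $\|\psi\|_{L^{\infty}(\mathbb{R}^{2})}^{2}\lesssim R^{2}\|\psi\|_{2}^{2}+R^{-2}\|\Delta\psi\|_{2}^{2}$, which is the quantitative form of the borderline embedding $H^{2}(\mathbb{R}^{2})\hookrightarrow L^{\infty}(\mathbb{R}^{2})$ with a tunable small constant on the top-order term (your computation $\int_{|\xi|>R}|\xi|^{-4}d\xi=\pi R^{-2}$ is correct and is where dimension two enters); and the orthogonal-type change of variables $u=x_{p}-x_{q}$ which, on the Fourier side, makes the symbol of $-\Delta_{u}$ a quadratic form bounded by $\sum_{j}|\xi_{j}|^{2}$, giving $\|\Delta_{u}\varphi\|_{2}\lesssim\|\sum_{j}\Delta_{x_{j}}\varphi\|_{2}$ by Plancherel. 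Summing over the $\binom{N}{2}$ pairs and sending $R\to\infty$ yields relative bound zero, and Kato--Rellich (the paper's Theorem~\ref{Kato's theorem }) then gives self-adjointness on $H^{2}(\mathbb{R}^{2N})$. Your closing remarks are also accurate and worth keeping in mind when reading the section: the pair potential $V_{pq}$ is genuinely not in $L^{2}(\mathbb{R}^{2N})+L^{\infty}(\mathbb{R}^{2N})$ as a function on the full configuration space, so the reduction to a two-dimensional slice is essential; and this theorem alone does not cover $V(x)=-\frac{1}{2\pi}\log|x|$, whose positive part grows at infinity and lies outside $L^{2}(\mathbb{R}^{2})+L^{\infty}(\mathbb{R}^{2})$ --- handling that growth is precisely the role of the confining potential and of Lemmas~\ref{estimate on moments and gradient }, \ref{K^2+|x|^2>K^2} and \ref{RELATIVE BOUND } later in the section.
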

Although $\log(x)$ fails to satisfy the assumption stated in (\ref{relative bound for Kato potentials }),
the negative part of $\log(x)$ evidently does verify this assumption,
and this observation will be important in the argument that we employ.
The work of Avron-Herbst-Simon {[}1{]} extends theorem \ref{relative bound for Kato potentials }
for the case where a magnetic field is included. In particular they
prove the following result 
\begin{thm}
\textup{\label{AVRON SIMON } (Theorem 2.4 in {[}1{]}){]}}. Let $\mathscr{V}_{N}$
be a multiplication operator on $L^{2}(\mathbb{R}^{2N})$. Suppose
that $\mathscr{V}_{N}$ is $-\Delta_{N}$-bounded with relative bound
$\alpha$. Then $\mathscr{V}_{N}$ is $K_{N}$-bounded with relative
bound $\leq\alpha$.
\end{thm}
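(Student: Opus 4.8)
The plan is to reduce the statement to the \emph{diamagnetic inequality}, which is the central technical device of [1] and which I would simply cite from there rather than reprove. For a smooth real vector potential $A$, Kato's distributional inequality $\hbar\,|\nabla|\psi||\le|(-i\hbar\nabla+A)\psi|$ gives the pointwise domination of heat semigroups
\[
\bigl|e^{-tK_N}f\bigr|(X_N)\le e^{t\frac{\hbar^{2}}{2}\Delta_N}|f|(X_N),\qquad\text{a.e. }X_N,\ f\in L^{2}(\mathbb{R}^{2N}),\ t>0,
\]
and hence, after Laplace transforming in $t$ against $e^{-\lambda t}$ with $\lambda>0$, the resolvent domination
\[
\bigl|(K_N+\lambda)^{-1}f\bigr|(X_N)\le(-\Delta_{N,\hbar}+\lambda)^{-1}|f|(X_N),\qquad\text{a.e. }X_N,
\]
where $-\Delta_{N,\hbar}=-\tfrac{\hbar^{2}}{2}\sum_j\Delta_{x_j}$ has a nonnegative integral kernel; note that $K_N$ reduces to $-\Delta_{N,\hbar}$ when $A\equiv0$, which is the meaning I attach to $-\Delta_N$ in the statement.

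Next I would transfer this domination to operator norms. Write $\mathscr{V}_N$ as multiplication by a real function $W$. The pointwise bound gives $|\mathscr{V}_N(K_N+\lambda)^{-1}f|=|W|\,\bigl|(K_N+\lambda)^{-1}f\bigr|\le|W|\,(-\Delta_{N,\hbar}+\lambda)^{-1}|f|$ a.e.; since $(-\Delta_{N,\hbar}+\lambda)^{-1}$ has a nonnegative kernel it is positivity preserving and satisfies $|(-\Delta_{N,\hbar}+\lambda)^{-1}g|\le(-\Delta_{N,\hbar}+\lambda)^{-1}|g|$, so that $\sup_{\|g\|_{2}\le1}\bigl\|\,|W|\,(-\Delta_{N,\hbar}+\lambda)^{-1}|g|\,\bigr\|_{2}=\|\mathscr{V}_N(-\Delta_{N,\hbar}+\lambda)^{-1}\|_{\mathrm{op}}$. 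Taking $L^{2}$ norms in the displayed inequality therefore yields
\[
\bigl\|\mathscr{V}_N(K_N+\lambda)^{-1}\bigr\|_{\mathrm{op}}\le\bigl\|\mathscr{V}_N(-\Delta_{N,\hbar}+\lambda)^{-1}\bigr\|_{\mathrm{op}},\qquad\lambda>0.
\]
By hypothesis $\mathscr{V}_N$ is $-\Delta_{N,\hbar}$-bounded, so the right-hand side is bounded uniformly for $\lambda\ge1$; hence so is the left-hand side, which already forces $D(K_N)=\operatorname{ran}(K_N+\lambda)^{-1}\subset D(\mathscr{V}_N)$ and, applying the uniform bound to $f=(K_N+\lambda)\psi$, shows that $\mathscr{V}_N$ is $K_N$-bounded.

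Finally I would identify the relative bounds using the standard fact that for a symmetric operator $S$ bounded relative to a nonnegative self-adjoint operator $T$ one has $a_0(S,T)=\lim_{\lambda\to\infty}\|S(T+\lambda)^{-1}\|_{\mathrm{op}}$ (the bound $\|T(T+\lambda)^{-1}\|\le1$ gives one inequality, and $\|S\psi\|\le\|S(T+\lambda)^{-1}\|_{\mathrm{op}}(\|T\psi\|+\lambda\|\psi\|)$ the other). Applying this with $T=K_N$ and with $T=-\Delta_{N,\hbar}$ and letting $\lambda\to\infty$ in the last display gives $a_0(\mathscr{V}_N,K_N)\le a_0(\mathscr{V}_N,-\Delta_{N,\hbar})=\alpha$, as claimed. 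The one genuinely delicate step is the diamagnetic inequality itself; in a self-contained argument the work would lie in justifying Kato's inequality by mollification and then deducing the semigroup domination through a Trotter-product comparison with the positivity-preserving free semigroup $e^{t\frac{\hbar^{2}}{2}\Delta_N}$ — everything downstream is the soft functional-analytic bookkeeping sketched above.
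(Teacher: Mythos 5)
Your proof is correct and follows the standard diamagnetic-inequality route: Kato's distributional inequality gives pointwise semigroup domination $|e^{-tK_N}f|\le e^{t\frac{\hbar^2}{2}\Delta_N}|f|$, Laplace transform yields resolvent domination, positivity-preservation of the free resolvent transfers this to the operator-norm comparison, and the relative bound is recovered as $\lim_{\lambda\to\infty}\|\mathscr{V}_N(T+\lambda)^{-1}\|_{\mathrm{op}}$. The paper itself quotes this statement from Avron--Herbst--Simon [1] without giving a proof, and your argument is essentially the one given there (the diamagnetic inequality, Theorem 2.3 of [1], followed by the deduction that constitutes Theorem 2.4).
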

In contrast to {[}1{]}, the Hamiltonian of interest contains an attractive
potential of the form $\frac{1}{2}\stackrel[j=1]{N}{\sum}|x_{j}|^{2}$.
It is the aim of the forthcoming lemmata to show that the essential
self-adjointness of the Hamiltonian in this case is obtained as a
corollary of the result reported in theorem (\ref{AVRON SIMON }).
As we will see, the reason for this is that a bound on $\mathscr{V}_{N}$
with respect to $K_{N}$ is sharper than a bound with respect to $\mathscr{K}_{N}$.
The first step is 
\begin{lem}
\label{estimate on moments and gradient } For each $\varphi\in C_{0}^{\infty}(\mathbb{R}^{2})$
one has the following estimates 
\[
\underset{\mathbb{R}^{2}}{\int}|x|^{2}|\varphi|^{2}(x)dx\leq||\mathscr{K}\varphi||_{2}^{2}+||\varphi||_{2}^{2}
\]

and 

\[
\underset{\mathbb{R}^{2}}{\int}\hbar^{2}|\nabla\varphi|^{2}(x)dx\leq\max(2,\frac{1}{2\epsilon^{2}})(||\mathscr{K}\varphi||_{2}^{2}+||\varphi||_{2}^{2}).
\]
\end{lem}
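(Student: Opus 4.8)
The plan is to deduce both inequalities from the single clean fact that $\mathscr{K}$ is a sum of three manifestly non-negative pieces, so that each piece is dominated by $\mathscr{K}$ in the quadratic-form sense. Write $\mathscr{K}=\tfrac12\sum_k (v^k)^2+\tfrac12|x|^2$ where $v^k=-i\hbar\partial_{x^k}+\tfrac1{2\epsilon}(x^\bot)^k$ is symmetric on $C_0^\infty(\mathbb{R}^2)$. For $\varphi\in C_0^\infty(\mathbb{R}^2)$ integration by parts gives $\langle\varphi,\mathscr{K}\varphi\rangle=\tfrac12\sum_k\|v^k\varphi\|_2^2+\tfrac12\||x|\varphi\|_2^2\ge 0$, and each of the two summands is individually $\le\langle\varphi,\mathscr{K}\varphi\rangle$. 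Then I would use Cauchy–Schwarz in the form $\langle\varphi,\mathscr{K}\varphi\rangle\le\|\mathscr{K}\varphi\|_2\|\varphi\|_2\le\tfrac12\|\mathscr{K}\varphi\|_2^2+\tfrac12\|\varphi\|_2^2$ (or $\le\|\mathscr{K}\varphi\|_2^2+\|\varphi\|_2^2$, matching the stated constants), which immediately yields the first inequality $\||x|\varphi\|_2^2\le 2\langle\varphi,\mathscr{K}\varphi\rangle\le\|\mathscr{K}\varphi\|_2^2+\|\varphi\|_2^2$.

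For the second inequality, the point is to recover the pure gradient $\hbar\nabla\varphi$ from the magnetic momentum $v^k$ and the position operator, using $-i\hbar\partial_{x^k}=v^k-\tfrac1{2\epsilon}(x^\bot)^k$. By the triangle inequality in $L^2$ and $(a+b)^2\le 2a^2+2b^2$,
\[
\hbar^2\|\nabla\varphi\|_2^2=\sum_k\|(-i\hbar\partial_{x^k})\varphi\|_2^2\le 2\sum_k\|v^k\varphi\|_2^2+\frac{1}{2\epsilon^2}\||x|\varphi\|_2^2,
\]
since $\sum_k|(x^\bot)^k|^2=|x|^2$. Now bound $\sum_k\|v^k\varphi\|_2^2\le 2\langle\varphi,\mathscr{K}\varphi\rangle$ and $\||x|\varphi\|_2^2\le 2\langle\varphi,\mathscr{K}\varphi\rangle$, so the right-hand side is at most $\max(2,\tfrac1{2\epsilon^2})\cdot\bigl(4+1\bigr)\langle\varphi,\mathscr{K}\varphi\rangle$ — I should be slightly more careful bookkeeping the numerical constant, collecting it as $(4+\tfrac1{\epsilon^2})\langle\varphi,\mathscr{K}\varphi\rangle\le\max(2,\tfrac1{2\epsilon^2})\cdot C\langle\varphi,\mathscr{K}\varphi\rangle$ for a harmless absolute $C$, and then absorbing $\langle\varphi,\mathscr{K}\varphi\rangle\le\|\mathscr{K}\varphi\|_2^2+\|\varphi\|_2^2$. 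The cleanest route keeping exactly the stated constant $\max(2,\tfrac1{2\epsilon^2})$ is to note $2\sum_k\|v^k\varphi\|_2^2+\tfrac1{2\epsilon^2}\||x|\varphi\|_2^2\le\max(2,\tfrac1{2\epsilon^2})\bigl(\sum_k\|v^k\varphi\|_2^2+\||x|\varphi\|_2^2\bigr)=2\max(2,\tfrac1{2\epsilon^2})\langle\varphi,\mathscr{K}\varphi\rangle\le\max(2,\tfrac1{2\epsilon^2})(\|\mathscr{K}\varphi\|_2^2+\|\varphi\|_2^2)$, where the last step uses $2\langle\varphi,\mathscr{K}\varphi\rangle\le\|\mathscr{K}\varphi\|_2^2+\|\varphi\|_2^2$.

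The only mild subtlety — not really an obstacle — is justifying the integrations by parts and the identity $\langle\varphi,(v^k)^2\varphi\rangle=\|v^k\varphi\|_2^2$, which is immediate for $\varphi\in C_0^\infty(\mathbb{R}^2)$ since all operators involved preserve this class and there are no boundary terms; likewise the cross term $\langle v^k\varphi, \tfrac1{2\epsilon}(x^\bot)^k\varphi\rangle$ that would appear in expanding $(v^k)^2=(-i\hbar\partial_{x^k}+\tfrac1{2\epsilon}(x^\bot)^k)^2$ is handled by the triangle inequality rather than by an algebraic identity, so no cancellation is needed. Everything else is elementary $L^2$ manipulation, so the lemma follows.
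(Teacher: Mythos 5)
Your proof is correct and follows essentially the same route as the paper: write $\langle\varphi,\mathscr{K}\varphi\rangle=\tfrac12\sum_k\|v^k\varphi\|_2^2+\tfrac12\||x|\varphi\|_2^2$ to dominate $\||x|\varphi\|_2^2$ by $2\langle\varphi,\mathscr{K}\varphi\rangle\le\|\mathscr{K}\varphi\|_2^2+\|\varphi\|_2^2$, and recover $\hbar\nabla$ from $\Pi-A$ via $(a+b)^2\le 2a^2+2b^2$ before absorbing the constants into $\max(2,\tfrac{1}{2\epsilon^2})$. The final constant bookkeeping you settle on matches the paper's $\max(4,\tfrac{1}{\epsilon^2})\langle\varphi,\mathscr{K}\varphi\rangle$ step exactly, so there is nothing to add.
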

\textit{Proof. }With the abbreviation $\mathscr{K}=\mathscr{K}_{1}$,
$K=K_{1}$ and $\Pi=(\Pi^{1,1},\Pi^{1,2})$, one has 

\[
\underset{\mathbb{R}^{2}}{\int}|x|^{2}|\varphi|^{2}(x)dx\leq\left\langle K\varphi,\varphi\right\rangle +\underset{\mathbb{R}^{2}}{\int}|x|^{2}|\varphi|^{2}(x)dx
\]

\[
=\underset{\mathbb{R}^{2}}{\int}\overline{\varphi}(x)(K+|x|^{2})\varphi(x)dx
\]

\begin{equation}
\leq2||\mathscr{K}\varphi||_{2}||\varphi||_{2}\leq||\mathscr{K}\varphi||_{2}^{2}+||\varphi||_{2}^{2},\label{eq:-29-1}
\end{equation}

which is the first inequality. The second inequality is implied from
(\ref{eq:-29-1}) as follows 

\[
\underset{\mathbb{R}^{2}}{\int}\hbar^{2}|\nabla\varphi|^{2}(x)dx=||(\Pi-A(x))\varphi||_{2}^{2}
\]

\[
\leq2||\Pi\varphi||_{2}^{2}+\frac{1}{2\epsilon^{2}}\underset{\mathbb{R}^{2}}{\int}|x|^{2}|\varphi|^{2}(x)dx
\]

\[
=4\underset{\mathbb{R}^{2}}{\int}\overline{\varphi}(x)K\varphi(x)dx+\frac{1}{2\epsilon^{2}}\underset{\mathbb{R}^{2}}{\int}|x|^{2}|\varphi|^{2}(x)dx
\]

\[
\leq\max(4,\frac{1}{\epsilon^{2}})\underset{\mathbb{R}^{2}}{\int}\overline{\varphi}(x)(K+\frac{1}{2}|x|^{2})\varphi(x)dx
\]

\[
\leq\max(2,\frac{1}{2\epsilon^{2}})(||\mathscr{K}\varphi||_{2}^{2}+||\varphi||_{2}^{2}).
\]

\begin{flushright}
$\square$
\par\end{flushright}

The next lemma shows that the norm associated with $\mathscr{K}_{N}$
controls the norm associated with $K_{N}$ up to a constant 
\begin{lem}
\label{K^2+|x|^2>K^2} There is a constant $C=C(\hbar,\epsilon,N)$
such that for all $\varphi_{N}\in C_{0}^{\infty}(\mathbb{R}^{2N})$
it holds that 

\[
||\mathscr{K}_{N}\varphi_{N}||_{2}^{2}+||\varphi_{N}||_{2}^{2}\geq C(||K_{N}\varphi_{N}||_{2}^{2}+||\varphi_{N}||_{2}^{2}).
\]
\end{lem}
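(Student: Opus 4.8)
The plan is to work from the splitting $\mathscr{K}_N = K_N + H$, where $H \coloneqq \tfrac{1}{2}\sum_{j=1}^{N}|x_j|^2 \geq 0$ is the confining potential, and to expand the square: for $\varphi_N \in C_0^\infty(\mathbb{R}^{2N})$,
\[
||\mathscr{K}_N\varphi_N||_2^2 = ||K_N\varphi_N||_2^2 + ||H\varphi_N||_2^2 + \langle \varphi_N,(K_NH + HK_N)\varphi_N\rangle .
\]
This is legitimate and the last term is real, because $K_N$ and $H$ are differential operators with polynomial coefficients that preserve $C_0^\infty(\mathbb{R}^{2N})$ and $K_NH + HK_N$ is symmetric there. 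Everything then reduces to a lower bound of the form $\langle\varphi_N,(K_NH+HK_N)\varphi_N\rangle \geq -c(\hbar,N)\,||\varphi_N||_2^2$, and I expect this commutator bookkeeping to be the only substantive step.

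To obtain it I would compute, for each pair $(p,k)$ and each $j$, the anticommutator $\{(\Pi^{p,k})^2,|x_j|^2\}$, where $\Pi^{p,k} = -i\hbar\partial_{x_p^k} + \tfrac{1}{2\epsilon}(x_p^\bot)^k$ and $K_N = \tfrac{1}{2}\sum_{p,k}(\Pi^{p,k})^2$ on $C_0^\infty(\mathbb{R}^{2N})$. The decisive point is that the vector potential $\tfrac{1}{2\epsilon}x_p^\bot$ is a multiplication operator, hence commutes with every $|x_j|^2$ and with $x_p^k$; so $\Pi^{p,k}$ commutes with $|x_j|^2$ for $j\neq p$, and for $j=p$ one has the canonical relations $[\Pi^{p,k},|x_p|^2] = -2i\hbar\,x_p^k$ and $[\Pi^{p,k},[\Pi^{p,k},|x_p|^2]] = -2\hbar^2$. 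Using the identity $\{A^2,B\} = 2ABA + [A,[A,B]]$ this gives $\{(\Pi^{p,k})^2,|x_j|^2\} = 2\Pi^{p,k}|x_j|^2\Pi^{p,k}$ for $j\neq p$ and $= 2\Pi^{p,k}|x_p|^2\Pi^{p,k} - 2\hbar^2$ for $j=p$. Taking expectations, moving the multiplication factors inside the inner product (again using that $\Pi^{p,k}$ commutes with multiplication by $|x_j|^2$), and summing — recalling that $\sum_{p,k}1 = 2N$ —
\[
\langle\varphi_N,(K_NH+HK_N)\varphi_N\rangle = \tfrac{1}{2}\sum_{p,k}\sum_{j=1}^{N}||\,|x_j|\,\Pi^{p,k}\varphi_N||_2^2 \;-\; N\hbar^2||\varphi_N||_2^2 \;\geq\; -N\hbar^2||\varphi_N||_2^2 .
\]

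Inserting this into the expansion and discarding the nonnegative term $||K_N\varphi_N||_2^2$ on the right yields $||H\varphi_N||_2^2 \leq ||\mathscr{K}_N\varphi_N||_2^2 + N\hbar^2||\varphi_N||_2^2$. Finally, writing $K_N\varphi_N = \mathscr{K}_N\varphi_N - H\varphi_N$ and using $(a+b)^2\leq 2a^2+2b^2$,
\[
||K_N\varphi_N||_2^2 \leq 2||\mathscr{K}_N\varphi_N||_2^2 + 2||H\varphi_N||_2^2 \leq 4||\mathscr{K}_N\varphi_N||_2^2 + 2N\hbar^2||\varphi_N||_2^2 ,
\]
so that $||K_N\varphi_N||_2^2 + ||\varphi_N||_2^2 \leq \max(4,\,1+2N\hbar^2)\bigl(||\mathscr{K}_N\varphi_N||_2^2 + ||\varphi_N||_2^2\bigr)$, which is the assertion with $C = \max(4,\,1+2N\hbar^2)^{-1}$, depending only on $\hbar$ and $N$. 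I do not anticipate a genuine analytic obstacle: every manipulation takes place among differential operators acting on $C_0^\infty(\mathbb{R}^{2N})$, and the singular Coulomb factor $\log|x_p-x_q|$ never enters this computation; the only thing requiring care is the combinatorics of the $j=p$ versus $j\neq p$ terms in the anticommutator sum. A sharper constant could be extracted from a more careful symmetrization of the cross term, but the crude bound above is all that is needed for the sequel.
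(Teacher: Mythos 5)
Your proof is correct, and it takes a genuinely different route from the paper's. The paper first treats the $N=1$ case by expanding $\|\mathscr{K}\varphi\|_2^2 = \|K\varphi\|_2^2 + \Re\langle K\varphi,|x|^2\varphi\rangle + \tfrac{1}{4}\|\,|x|^2\varphi\|_2^2$, then writes $|x|^2 = \epsilon^2(\Pi^k+\overline{\Pi^k})^2$ and pushes this through a chain of commutation identities involving the matrix $\mathbf{J}$, followed by an integration by parts and an auxiliary moment/gradient estimate; the resulting lower bound for the cross term has the form $-\alpha(\hbar,\epsilon)(\|\mathscr{K}\varphi\|_2^2+\|\varphi\|_2^2)$ with $\alpha(\hbar,\epsilon) = \tfrac{\hbar^2}{2}+\max(2,\tfrac{1}{2\epsilon^2})$, and the $N$-body case is then assembled by tensorization, giving $C = N^{-2}(1+\alpha(\hbar,\epsilon))^{-1}$. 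You instead work directly at the $N$-body level, expand $\|\mathscr{K}_N\varphi_N\|_2^2 = \|K_N\varphi_N\|_2^2 + \|H\varphi_N\|_2^2 + \langle\varphi_N,(K_NH+HK_N)\varphi_N\rangle$, and handle the cross term with the single algebraic identity $\{A^2,B\} = 2ABA + [A,[A,B]]$ applied to $A=\Pi^{p,k}$, $B=|x_j|^2$. Since the vector potential commutes with every multiplication operator, the magnetic term drops out of all the nested commutators, and you arrive at $\langle\varphi_N,(K_NH+HK_N)\varphi_N\rangle \geq -N\hbar^2\|\varphi_N\|_2^2$ and hence $C = \max(4,1+2N\hbar^2)^{-1}$. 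Your argument is shorter, avoids the auxiliary lemma, and — a real improvement — yields a constant independent of $\epsilon$, whereas the paper's deteriorates like $\epsilon^2$ as $\epsilon\to 0$. One small slip: when you pass from $\langle\varphi_N,\Pi^{p,k}|x_j|^2\Pi^{p,k}\varphi_N\rangle$ to $\|\,|x_j|\Pi^{p,k}\varphi_N\|_2^2$, what you are actually using is the symmetry of $\Pi^{p,k}$ on $C_0^\infty$, not commutation of $\Pi^{p,k}$ with $|x_j|^2$ (which fails for $j=p$); the parenthetical justification is misattributed, but the manipulation itself is valid.
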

\begin{proof}
Let us first consider the case $N=1$ (with the abbreviation $\mathscr{K}=\mathscr{K}_{1}$,
$K=K_{1}$ and $\Pi^{1,k}=\Pi^{k}$). For each $\varphi\in C_{0}^{\infty}(\mathbb{R}^{2})$
we expand 

\[
||\mathscr{K}\varphi||_{2}^{2}
\]

\begin{equation}
=||K\varphi||_{2}^{2}+\Re\left\langle K\varphi,|x|^{2}\varphi\right\rangle +\frac{1}{4}\left\Vert |x|^{2}\varphi\right\Vert _{2}^{2}.\label{eq:-2}
\end{equation}

The second term is estimated from below as follows (applying Einstein's
summation for $k,l$)

\[
\Re\left\langle K\varphi,|x|^{2}\varphi\right\rangle 
\]

\[
=\epsilon^{2}\Re\left\langle K\varphi,(\Pi^{k}+\overline{\Pi^{k}})^{2}\varphi\right\rangle 
\]

\[
=\epsilon^{2}\Re\left\langle K\varphi,((\Pi^{k})^{2}+\Pi^{k}\vee\overline{\Pi^{k}}+(\overline{\Pi^{k}})^{2})\varphi\right\rangle 
\]

\[
=\epsilon^{2}\Re\left\langle K\varphi,(\Pi^{k})^{2}\varphi\right\rangle +\epsilon^{2}\Re\left\langle K\varphi,\Pi^{k}\vee\overline{\Pi^{k}}\varphi\right\rangle 
\]

\[
+\epsilon^{2}\Re\left\langle K\varphi,(\overline{\Pi^{k}})^{2}\varphi\right\rangle 
\]

\[
=\epsilon^{2}\Re\left\langle K\varphi,(\Pi^{k})^{2}\varphi\right\rangle +2\epsilon^{2}\Re\left\langle K\varphi,\Pi^{k}\overline{\Pi^{k}}\varphi\right\rangle 
\]

\[
+\epsilon^{2}\Re\left\langle K\varphi,(\overline{\Pi^{k}})^{2}\varphi\right\rangle 
\]

\[
=\frac{\epsilon^{2}}{2}\Re\left\langle (\Pi^{l})^{2}\varphi,(\Pi^{k})^{2}\varphi\right\rangle +\epsilon^{2}\Re\left\langle (\Pi^{l})^{2}\varphi,\Pi^{k}\overline{\Pi^{k}}\varphi\right\rangle 
\]

\begin{equation}
+\frac{\epsilon^{2}}{2}||\Pi^{l}\overline{\Pi^{k}}\varphi||_{2}^{2},\label{eq:-1}
\end{equation}

where we used the simple observation that $\Pi^{l}$ and $\overline{\Pi^{k}}$
commute. In addition, for each $l,k$ the following relation is easily
observed 

\[
\Pi^{l}\Pi^{k}-\Pi^{k}\Pi^{l}=\frac{1}{2\epsilon}i\hbar\mathbf{J}_{kl}
\]

where we recall

\[
\mathbf{J}\coloneqq2\begin{pmatrix}0 & 1\\
-1 & 0
\end{pmatrix}.
\]

Therefore 
\[
\left\langle (\Pi^{l})^{2}\varphi,(\Pi^{k})^{2}\varphi\right\rangle =\left\langle \Pi^{l}\varphi,\Pi^{l}\Pi^{k}\Pi^{k}\varphi\right\rangle 
\]

\[
=\left\langle \Pi^{l}\varphi,(\Pi^{k}\Pi^{l}+\frac{i\hbar}{2\epsilon}\mathbf{J}_{kl})\Pi^{k}\varphi\right\rangle 
\]

\[
=\left\langle \Pi^{k}\Pi^{l}\varphi,\Pi^{l}\Pi^{k}\varphi\right\rangle +\left\langle \Pi^{l}\varphi,\frac{i\hbar}{2\epsilon}\mathbf{J}_{kl}\Pi^{k}\varphi\right\rangle 
\]

\[
=\left\langle \Pi^{k}\Pi^{l}\varphi,(\Pi^{k}\Pi^{l}+\frac{i\hbar}{2\epsilon}\mathbf{J}_{kl})\varphi\right\rangle +\left\langle \Pi^{l}\varphi,\frac{i\hbar}{2\epsilon}\mathbf{J}_{kl}\Pi^{k}\varphi\right\rangle 
\]

\begin{equation}
=||\Pi^{k}\Pi^{l}\varphi||_{2}^{2}+\left\langle \Pi^{l}\varphi,\frac{i\hbar}{\epsilon}\mathbf{J}_{kl}\Pi^{k}\varphi\right\rangle .\label{eq:-3}
\end{equation}

Moreover 

\[
\Re\left\langle (\Pi^{l})^{2}\varphi,\Pi^{k}\overline{\Pi^{k}}\varphi\right\rangle =\Re\left\langle \Pi^{l}\varphi,\Pi^{l}\Pi^{k}\overline{\Pi^{k}}\varphi\right\rangle 
\]

\[
=\Re\left\langle \overline{\Pi^{k}}\Pi^{l}\varphi,\Pi^{l}\Pi^{k}\varphi\right\rangle 
\]

\begin{equation}
=\Re\left\langle \Pi^{l}\overline{\Pi^{k}}\varphi,\Pi^{l}\Pi^{k}\varphi\right\rangle \geq-\frac{1}{2}(||\Pi^{l}\overline{\Pi^{k}}\varphi||_{2}^{2}+||\Pi^{l}\Pi^{k}\varphi||_{2}^{2}).\label{eq:-4}
\end{equation}

To conclude, equations (\ref{eq:-1}), (\ref{eq:-3}) and inequality
(\ref{eq:-4}) entail

\[
\Re\left\langle K\varphi,|x|^{2}\varphi\right\rangle 
\]

\[
\geq\frac{\epsilon^{2}}{2}\Re\left\langle \Pi^{l}\varphi,\frac{i\hbar}{\epsilon}\mathbf{J}_{kl}\Pi^{k}\varphi\right\rangle 
\]

\[
+\frac{\epsilon^{2}}{2}||\Pi^{l}\Pi^{k}\varphi||_{2}^{2}-\frac{\epsilon^{2}}{2}||\Pi^{l}\overline{\Pi^{k}}\varphi||_{2}^{2}-\frac{\epsilon^{2}}{2}||\Pi^{l}\Pi^{k}\varphi||_{2}^{2}+\frac{\epsilon^{2}}{2}||\Pi^{l}\overline{\Pi^{k}}\varphi||_{2}^{2}
\]

\[
=\frac{\epsilon^{2}}{2}\Re\left\langle \Pi^{l}\varphi,\frac{i\hbar}{\epsilon}\mathbf{J}_{kl}\Pi^{k}\varphi\right\rangle ,
\]

hence

\[
\Re\left\langle K\varphi,|x|^{2}\varphi\right\rangle \geq\frac{\epsilon^{2}}{2}\Re\left\langle \Pi^{l}\varphi,\frac{i\hbar}{\epsilon}\mathbf{J}_{kl}\Pi^{k}\varphi\right\rangle .
\]

Integration by parts shows that 
\[
\frac{\epsilon^{2}}{2}\Re\left\langle \Pi^{l}\varphi,\frac{i\hbar}{\epsilon}\mathbf{J}_{kl}\Pi^{k}\varphi\right\rangle =\Re\left(i\hbar\epsilon\underset{\mathbb{R}^{2}}{\int}\overline{\varphi}A^{\bot}(x)\cdot i\hbar\nabla\varphi(x)dx\right)
\]

\[
\geq-\hbar^{2}\left(\frac{1}{4}\underset{\mathbb{R}^{2}}{\int}|x|^{2}|\varphi|^{2}+\frac{1}{2}\underset{\mathbb{R}^{2}}{\int}|\nabla\varphi|^{2}(x)dx\right),
\]

so that in light of lemma (\ref{estimate on moments and gradient })

\[
\Re\left\langle K\varphi,|x|^{2}\varphi\right\rangle 
\]

\[
\geq-\frac{\hbar^{2}}{2}(||\mathscr{K}\varphi||_{2}^{2}+||\varphi||_{2}^{2})-\max(2,\frac{1}{2\epsilon^{2}})(||\mathscr{K}\varphi||_{2}^{2}+||\varphi||_{2}^{2})
\]

\[
=-\alpha(\hbar,\epsilon)(||\mathscr{K}\varphi||_{2}^{2}+||\varphi||_{2}^{2}),
\]

where 

\[
\alpha(\hbar,\epsilon)\coloneqq(\frac{\hbar^{2}}{2}+\max(2,\frac{1}{2\epsilon^{2}})).
\]

Therefore identity (\ref{eq:-2}) implies 

\[
||\mathscr{K}\varphi||_{2}^{2}\geq||K\varphi||_{2}^{2}+\Re\left\langle K\varphi,|x|^{2}\varphi\right\rangle 
\]

\[
\geq||K\varphi||_{2}^{2}-\alpha(\hbar,\epsilon)(||\mathscr{K}\varphi||_{2}^{2}+||\varphi||_{2}^{2})
\]

that is 

\[
(1+\alpha(\hbar,\epsilon))||\mathscr{K}\varphi||_{2}^{2}+(1+\alpha(\hbar,\epsilon))||\varphi||_{2}^{2}\geq||K\varphi||_{2}^{2}+||\varphi||_{2}^{2}
\]

which is the same as 
\begin{equation}
||\mathscr{K}\varphi||_{2}^{2}+||\varphi||_{2}^{2}\geq C(\hbar,\epsilon)(||K\varphi||_{2}^{2}+||\varphi||_{2}^{2}),\label{smooth inequality}
\end{equation}

where 

\[
C(\hbar,\epsilon)\coloneqq\frac{1}{1+\alpha(\hbar,\epsilon)}.
\]

The case of arbitrary $N\geq1$ is deduced from the case $N=1$. Indeed,
putting 

\[
K^{p}\coloneqq\frac{1}{2}\underset{k}{\sum}(\Pi^{p,k})^{\ast}\Pi^{p,k}
\]

for each $\varphi_{N}\in C_{0}^{\infty}(\mathbb{R}^{2N})$ we have 

\[
||\mathscr{K}_{N}\varphi_{N}||_{2}^{2}+N||\varphi_{N}||_{2}^{2}
\]

\[
=\left\langle \left(\stackrel[p=1]{N}{\sum}(K^{p}+\frac{1}{2}|x_{p}|^{2})\right)\varphi_{N},\left(\stackrel[p=1]{N}{\sum}(K^{p}+\frac{1}{2}|x_{p}|^{2})\right)\varphi_{N}\right\rangle +N||\varphi_{N}||_{2}^{2}
\]

\[
=\stackrel[p=1]{N}{\sum}||(K^{p}+\frac{1}{2}|x_{p}|^{2})\varphi_{N}||_{2}^{2}+\underset{p\neq q}{\sum}\left\langle (K^{p}+\frac{1}{2}|x_{p}|^{2})\varphi_{N},(K^{q}+\frac{1}{2}|x_{q}|^{2})\varphi_{N}\right\rangle +N||\varphi_{N}||_{2}^{2}
\]

\begin{equation}
\geq\stackrel[p=1]{N}{\sum}||(K^{p}+\frac{1}{2}|x_{p}|^{2})\varphi_{N}||_{2}^{2}+||\varphi_{N}||_{2}^{2}\label{eq:}
\end{equation}

because for any $p\neq q$ we have that 

\[
\left\langle (K^{p}+\frac{1}{2}|x_{p}|^{2})\varphi_{N},(K^{q}+\frac{1}{2}|x_{q}|^{2})\varphi_{N}\right\rangle \geq0.
\]

So in view of inequality (\ref{smooth inequality}) we have that the
right hand side of (\ref{eq:}) is 

\[
\geq\stackrel[p=1]{N}{\sum}C(\hbar,\epsilon)(||K^{p}\varphi_{N}||_{2}^{2}+||\varphi_{N}||_{2}^{2})
\]

\[
\geq\frac{C(\hbar,\epsilon)}{N}\left\Vert \stackrel[p=1]{N}{\sum}K^{p}\varphi_{N}\right\Vert _{2}^{2}+NC(\hbar,\epsilon)||\varphi_{N}||_{2}^{2}
\]

\[
=\frac{C(\hbar,\epsilon)}{N}||K_{N}\varphi_{N}||_{2}^{2}+NC(\hbar,\epsilon)||\varphi_{N}||_{2}^{2},
\]

hence 
\begin{equation}
||\mathscr{K}_{N}\varphi_{N}||_{2}^{2}+||\varphi_{N}||_{2}^{2}\geq C(\hbar,\epsilon,N)(||K_{N}\varphi_{N}||_{2}^{2}+||\varphi_{N}||_{2}^{2}),\label{N smooth inequality}
\end{equation}

where 

\[
C(\hbar,\epsilon,N)\coloneqq\frac{1}{N^{2}(1+\alpha(\hbar,\epsilon))}.
\]
\end{proof}
By theorem (\ref{Kato's theorem }), in order to assert that $\mathscr{H}_{N},D(\mathscr{H}_{N})=C_{0}^{\infty}(\mathbb{R}^{2N})$
is essentialy self-adjoint it is sufficient to show that $\mathscr{V}_{N}$
, viewed as a multiplication operator with domain $D(\mathscr{V}_{N})=C_{0}^{\infty}(\mathbb{R}^{2N})$,
is relatively bounded (with relative bound $<1$) with respect to
$\mathscr{K}_{N}$. This is achieved in the following 
\begin{lem}
\label{RELATIVE BOUND } For each $0<a<1$ there is some $b=b(N,\epsilon,\hbar)>0$
such that for all $\varphi_{N}\in C_{0}^{\infty}(\mathbb{R}^{2N})$ 

\[
||\mathscr{V}_{N}\varphi_{N}||_{2}^{2}\leq a||\mathscr{K}_{N}\varphi_{N}||_{2}^{2}+b||\varphi_{N}||_{2}^{2}.
\]

In particular $\mathscr{H}_{N}=\mathscr{K}_{N}+\mathscr{V}_{N}$ is
essentially self-adjoint on $C_{0}^{\infty}(\mathbb{R}^{2N})$. 
\end{lem}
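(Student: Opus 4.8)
The plan is to exploit the splitting of the Coulomb potential $V(x)=-\frac1{2\pi}\log|x|$ into its positive and negative parts, treating each part by a different mechanism, because $V$ itself is not a Kato-class potential (it does not belong to $L^{\infty}(\mathbb R^{2})+L^{2}(\mathbb R^{2})$) and so Theorem \ref{relative bound for Kato potentials} cannot be applied to it directly. Write $V=V^{+}+V^{-}$, where $V^{+}\ge 0$ is supported in $\{|x|\le 1\}$ and $V^{-}\le 0$ is supported in $\{|x|\ge 1\}$, and correspondingly $\mathscr V_{N}=\mathscr V_{N}^{+}+\mathscr V_{N}^{-}$ with $\mathscr V_{N}^{\pm}=\frac1{N\epsilon}\sum_{p<q}V^{\pm}_{pq}$. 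It suffices to prove that each of $\mathscr V_{N}^{\pm}$ is $\mathscr K_{N}$-bounded with relative bound $0$: their sum is then $\mathscr K_{N}$-bounded with relative bound $0$, so for any $a\in(0,1)$ the inequality $\|\mathscr V_{N}\varphi_{N}\|_{2}^{2}\le a\|\mathscr K_{N}\varphi_{N}\|_{2}^{2}+b\|\varphi_{N}\|_{2}^{2}$ holds with some $b=b(N,\epsilon,\hbar)$, and Kato's Theorem \ref{Kato's theorem} (applied with $T=\mathscr K_{N}$, which is essentially self-adjoint on $C_{0}^{\infty}(\mathbb R^{2N})$, and $S=\mathscr V_{N}$) yields the essential self-adjointness of $\mathscr H_{N}$.

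For $\mathscr V_{N}^{+}$: since $V^{+}$ is compactly supported and $\int_{|x|\le1}(\log|x|)^{2}\,dx<\infty$, one has $V^{+}\in L^{2}(\mathbb R^{2})\subset L^{\infty}(\mathbb R^{2})+L^{2}(\mathbb R^{2})$, so Theorem \ref{relative bound for Kato potentials} gives that $\mathscr V_{N}^{+}$ is $-\Delta_{N,\hbar}$-bounded with relative bound $0$; Theorem \ref{AVRON SIMON} then makes it $K_{N}$-bounded with relative bound $0$; and since Lemma \ref{K^2+|x|^2>K^2} rearranges to $\|K_{N}\varphi_{N}\|_{2}^{2}\le C(\hbar,\epsilon,N)^{-1}\|\mathscr K_{N}\varphi_{N}\|_{2}^{2}+(C(\hbar,\epsilon,N)^{-1}-1)\|\varphi_{N}\|_{2}^{2}$, composing the two bounds shows that $\mathscr V_{N}^{+}$ is $\mathscr K_{N}$-bounded with relative bound $0$ as well (the fixed factor $C^{-1}$ is harmless because the $\|K_{N}\varphi_{N}\|_{2}^{2}$-coefficient can already be taken arbitrarily small).

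For $\mathscr V_{N}^{-}$ the confining potential does the work. Since $\log r=o(r^{2})$ as $r\to\infty$ and $V^{-}$ vanishes on $\{|x|<1\}$, for each $\delta>0$ there is $C_{\delta}>0$ with $|V^{-}(x)|\le\delta|x|^{2}+C_{\delta}$ on $\mathbb R^{2}$; hence $|V^{-}_{pq}|\le 2\delta(|x_{p}|^{2}+|x_{q}|^{2})+C_{\delta}$ and, as multiplication operators, $|\mathscr V_{N}^{-}|\le\frac{2\delta(N-1)}{N\epsilon}\sum_{p=1}^{N}|x_{p}|^{2}+\frac{(N-1)C_{\delta}}{2\epsilon}$. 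It then remains to bound $\bigl\|(\sum_{p}|x_{p}|^{2})\varphi_{N}\bigr\|_{2}$ by $\|\mathscr K_{N}\varphi_{N}\|_{2}$ up to constants and a lower-order term, and for this I reuse the one-particle computation inside the proof of Lemma \ref{K^2+|x|^2>K^2}: the identity $\|\mathscr K\varphi\|_{2}^{2}=\|K\varphi\|_{2}^{2}+\Re\langle K\varphi,|x|^{2}\varphi\rangle+\tfrac14\||x|^{2}\varphi\|_{2}^{2}$ and the lower bound $\Re\langle K\varphi,|x|^{2}\varphi\rangle\ge-\alpha(\hbar,\epsilon)(\|\mathscr K\varphi\|_{2}^{2}+\|\varphi\|_{2}^{2})$ proved there give $\||x|^{2}\varphi\|_{2}^{2}\le 4(1+\alpha)\|\mathscr K\varphi\|_{2}^{2}+4\alpha\|\varphi\|_{2}^{2}$. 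Applying this in each variable $x_{p}$ (integrating over the remaining variables), summing over $p$, using $\|\mathscr K_{N}\varphi_{N}\|_{2}^{2}\ge\sum_{p}\|\mathscr K^{p}\varphi_{N}\|_{2}^{2}$ with $\mathscr K^{p}=K^{p}+\tfrac12|x_{p}|^{2}$ (the cross terms being nonnegative, exactly as in the Lemma), and then Cauchy--Schwarz in $p$, one gets $\bigl\|(\sum_{p}|x_{p}|^{2})\varphi_{N}\bigr\|_{2}^{2}\le 4N(1+\alpha)\|\mathscr K_{N}\varphi_{N}\|_{2}^{2}+4N^{2}\alpha\|\varphi_{N}\|_{2}^{2}$. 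Substituting back, the coefficient of $\|\mathscr K_{N}\varphi_{N}\|_{2}^{2}$ in the resulting bound on $\|\mathscr V_{N}^{-}\varphi_{N}\|_{2}^{2}$ is of order $\delta^{2}(N-1)^{2}(1+\alpha)/(N\epsilon^{2})\to 0$ as $\delta\to0$, while the coefficient of $\|\varphi_{N}\|_{2}^{2}$ is permitted to blow up since the statement allows $b$ to depend on $N,\epsilon,\hbar$; hence $\mathscr V_{N}^{-}$ is $\mathscr K_{N}$-bounded with relative bound $0$, completing the argument.

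The main obstacle is the $V^{-}$ part, and this is precisely why the quadratic confining potential $\frac12\sum_{p}|x_{p}|^{2}$ was inserted into the Hamiltonian: the logarithmic growth at infinity can be absorbed only by $\mathscr K_{N}$, and only after one upgrades the trivial \emph{form} inequality $\frac12\sum_{p}|x_{p}|^{2}\le\mathscr K_{N}$ to a genuine \emph{operator} relative bound, which is what the proof of Lemma \ref{K^2+|x|^2>K^2} supplies. The $V^{+}$ part, by contrast, is a textbook Kato-class perturbation, and the transfer of relative bounds from $-\Delta_{N,\hbar}$ and $K_{N}$ to $\mathscr K_{N}$ is immediate from Theorem \ref{AVRON SIMON} and Lemma \ref{K^2+|x|^2>K^2}.
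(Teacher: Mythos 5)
Your proof is correct and follows the same overall plan as the paper's: split $V$ at $|x|=1$ (you use the opposite labeling, but the decomposition is the same) and treat the two pieces separately. For the locally singular piece the arguments coincide: it lies in $L^{2}(\mathbb{R}^{2})$, so Theorem \ref{relative bound for Kato potentials } makes it $-\Delta_{N,\hbar}$-bounded with relative bound zero, which Theorem \ref{AVRON SIMON } converts to a $K_{N}$-bound and Lemma \ref{K^2+|x|^2>K^2} converts to a $\mathscr{K}_{N}$-bound. You diverge only in the piece supported on $\{|x|\geq 1\}$. The paper uses the crude linear bound $|\log r|\le r$ for $r\geq 1$, so that $|V_{pq}|^{2}\le 2(|x_{p}|^{2}+|x_{q}|^{2})$; this reduces the estimate to the trivial \emph{form} inequality $\frac{1}{2}\sum_{p}|x_{p}|^{2}\le\mathscr{K}_{N}$, giving $\|V_{pq}\varphi\|_{2}^{2}\lesssim\|\varphi\|_{2}\,\|\mathscr{K}_{N}\varphi\|_{2}$, and then Young's inequality shrinks the coefficient on $\|\mathscr{K}_{N}\varphi\|_{2}^{2}$ to $a/4$. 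You instead write $|\log r|\le\delta r^{2}+C_{\delta}$, which after squaring requires a genuine \emph{operator} bound on $|x_{p}|^{2}$; you correctly extract $\||x|^{2}\varphi\|_{2}^{2}\le 4(1+\alpha)\|\mathscr{K}\varphi\|_{2}^{2}+4\alpha\|\varphi\|_{2}^{2}$ from the internal identities of the proof of Lemma \ref{K^2+|x|^2>K^2}, and the smallness of the relative bound then comes from sending $\delta\to 0$ rather than from Young. Both routes close; the paper's is somewhat more economical because it needs one fewer power of $|x|^{2}$, so the trivial form inequality suffices, whereas your route re-enters the proof of Lemma \ref{K^2+|x|^2>K^2} and re-uses its intermediate estimates (in particular the constant $\alpha(\hbar,\epsilon)$) as standalone inputs.
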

\textit{Proof.} Set $V^{+}\coloneqq\mathbf{1}_{|x|\geq1}\log(|x|)$
and $V^{-}\coloneqq\mathbf{1}_{|x|\leq1}\log(|x|)$. 

\textbf{Step 1.} \textbf{Relative bound for $\frac{1}{N\epsilon}\underset{p<q}{\sum}V_{pq}^{+}$.
}For each $\varphi_{N}\in C_{0}^{\infty}(\mathbb{R}^{2N})$ one has 

\[
\underset{\mathbb{R}^{2N}}{\int}|V^{+}|^{2}(|x_{p}-x_{q}|)|\varphi_{N}|^{2}(X_{N})dX_{N}
\]

\[
\leq2\underset{\mathbb{R}^{2N}}{\int}(|x_{p}|^{2}+|x_{q}|^{2})|\varphi_{N}|^{2}(X_{N})dX_{N}
\]

\[
\leq2\left\langle \varphi_{N},K^{p}\varphi_{N}\right\rangle +2\underset{\mathbb{R}^{2N}}{\int}|x_{p}|^{2}|\varphi_{N}|^{2}(X_{N})dX_{N}
\]

\[
+2\left\langle \varphi_{N},K^{q}\varphi_{N}\right\rangle +2\underset{\mathbb{R}^{2N}}{\int}|x_{q}|^{2}|\varphi|^{2}(X_{N})dX_{N}
\]

\[
=2\underset{\mathbb{R}^{2N}}{\int}\overline{\varphi_{N}}(X_{N})(K^{p}+K^{q}+|x_{p}|^{2}+|x_{q}|^{2})\varphi_{N}(X_{N})dX_{N}
\]

\[
\leq4\underset{\mathbb{R}^{2N}}{\int}\overline{\varphi_{N}}(X_{N})\mathscr{K}_{N}\varphi_{N}(X_{N})dX_{N}
\]

\begin{equation}
\leq4||\varphi_{N}||_{2}||\mathscr{K}_{N}\varphi_{N}||_{2}\leq2||\mathscr{K}_{N}\varphi_{N}||_{2}^{2}+2||\varphi_{N}||_{2}^{2}<\infty.\label{eq:-29-1-1}
\end{equation}

Therefore, given $a>0$ pick $\eta=\frac{a\epsilon^{2}}{16(N-1)}$
and $b_{1}=\frac{4(N-1)}{\epsilon^{2}\eta}$ in order to find 

\[
\left\Vert \frac{1}{N\epsilon}\underset{p<q}{\sum}V_{pq}^{+}\varphi\right\Vert _{2}^{2}\leq\frac{1}{N^{2}\epsilon^{2}}\left(\underset{p<q}{\sum}||V_{pq}^{+}\varphi||_{2}\right)^{2}
\]

\[
\leq\frac{N-1}{2N\epsilon^{2}}\underset{p<q}{\sum}||V_{pq}^{+}\varphi||_{2}^{2}
\]

\[
\leq\frac{N-1}{N\epsilon^{2}}\underset{\mathbb{R}^{2N}}{\int}\overline{\varphi}(X_{N})(2N\stackrel[p=1]{N}{\sum}K^{p}+2N|X_{N}|^{2})\varphi(X_{N})dX_{N}
\]

\[
\leq\frac{4(N-1)}{\epsilon^{2}}||\varphi||_{2}||\mathscr{K}_{N}\varphi||_{2}
\]

\[
\leq\frac{4\eta(N-1)}{\epsilon^{2}}||\mathscr{K}_{N}\varphi||_{2}^{2}+\frac{4(N-1)}{\epsilon^{2}\eta}||\varphi||_{2}^{2}
\]

\begin{equation}
=\frac{1}{4}a||\mathscr{K}_{N}\varphi||_{2}^{2}+b_{1}||\varphi||_{2}^{2}.\label{Bound on positive part}
\end{equation}

\textbf{Step 2. Relative bound for $\frac{1}{N\epsilon}\underset{p<q}{\sum}V_{pq}^{-}$}.
The combination of theorems (\ref{relative bound for Kato potentials })
and (\ref{AVRON SIMON }) with lemma (\ref{K^2+|x|^2>K^2}) implies
that for each $0<a<1$ there is some $b_{2}=b_{2}(\epsilon,\hbar,N)$
such that for all $\varphi_{N}\in C_{0}^{\infty}(\mathbb{R}^{2N})$
it holds that 

\[
\left\Vert \frac{1}{N\epsilon}\underset{p<q}{\sum}V_{pq}^{-}\varphi_{N}\right\Vert _{2}^{2}\leq\frac{1}{4}aC(\hbar,\epsilon,N)||K_{N}\varphi_{N}||_{2}^{2}+b_{2}||\varphi_{N}||_{2}^{2}
\]

\begin{equation}
\leq\frac{1}{4}a(||\mathscr{K}_{N}\varphi_{N}||_{2}^{2}+||\varphi_{N}||^{2})+b_{2}||\varphi_{N}||_{2}^{2}\leq\frac{1}{4}a||\mathscr{K}_{N}\varphi_{N}||_{2}^{2}+(b_{2}+1)||\varphi_{N}||_{2}^{2}\label{bound on the negative part}
\end{equation}

Therefore the combination inequalities (\ref{Bound on positive part})
and (\ref{bound on the negative part}) entails 
\[
\left\Vert \frac{1}{N\epsilon}\underset{p<q}{\sum}V_{pq}\varphi\right\Vert _{2}^{2}\leq a||\mathscr{K}_{N}\varphi_{N}||_{2}^{2}+b||\varphi_{N}||_{2}^{2}
\]

with $b=2(b_{1}+b_{2}+1)$. 
\begin{flushright}
$\square$
\par\end{flushright}

The following elementary observation will be used to prove lemma (\ref{ENERGY CON}) 
\begin{fact}
\textup{\label{footnote } (Footnote 3 in {[}6{]})} If $S=S^{\ast}\geq0$
is unbounded with domain $D(S)\subset\mathfrak{H}$ and $T=T^{\ast}\geq0$
is trace class with eigenvectors $(e_{m})_{m\geq1}\subset D(S)$ and
eigenvalues $(\lambda_{m})_{m\geq1}$ respectively, then $TST\geq0$
and 

\[
\mathrm{trace}(TST)=\stackrel[m=1]{\infty}{\sum}\lambda_{m}^{2}\left\langle \psi_{m},S\psi_{m}\right\rangle .
\]
\end{fact}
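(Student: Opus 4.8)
The plan is to diagonalize $T$ and to evaluate the trace in its own eigenbasis, so that, sandwiched against the eigenvectors, the two copies of $T$ reduce to scalars; the only genuine work is to make sense of $TST$ when $S$ is unbounded.

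First I would apply the spectral theorem for compact self-adjoint operators to $T$: since $T=T^{\ast}\geq 0$ is trace class it is in particular compact, so $\mathfrak H$ admits an orthonormal basis $(e_m)_{m\geq 1}$ of eigenvectors of $T$, with eigenvalues $\lambda_m\geq 0$ satisfying $\sum_m\lambda_m=\mathrm{trace}(T)<\infty$; the hypothesis records that $e_m\in D(S)$ for each $m$ (only the indices with $\lambda_m>0$ will matter). Consequently $Te_m=\lambda_m e_m\in D(S)$ for every $m$, so on the dense subspace $\mathfrak D:=\mathrm{span}\{e_m:m\geq 1\}$ the operator $TST$ is well defined: for a finite combination $\psi=\sum_m c_m e_m$ one has $T\psi\in D(S)$, hence $ST\psi$ makes sense and $TST\psi=T(ST\psi)$ lies in $\mathfrak H$ because $T$ is bounded.

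Next I would record positivity: for every $\psi\in\mathfrak D$,
\[
\langle\psi,TST\psi\rangle=\langle T\psi,S(T\psi)\rangle\geq 0,
\]
using $T=T^{\ast}$ and $S\geq 0$. Thus $(TST,\mathfrak D)$ is a nonnegative symmetric operator, and (for instance via its Friedrichs extension) it defines a nonnegative self-adjoint operator, which is the meaning of $TST\geq 0$. For a nonnegative self-adjoint operator the trace is a well-defined element of $[0,+\infty]$, independent of the orthonormal basis used to evaluate it, as long as that basis lies in the form domain. Choosing the eigenbasis $(e_m)$, which lies in $\mathfrak D$ and hence in the form domain, yields
\[
\mathrm{trace}(TST)=\sum_{m\geq 1}\langle e_m,TSTe_m\rangle=\sum_{m\geq 1}\langle Te_m,S(Te_m)\rangle=\sum_{m\geq 1}\lambda_m^{2}\langle e_m,Se_m\rangle ,
\]
which is the claimed identity (reading $\psi_m=e_m$ in the statement), each term being $\langle e_m,Se_m\rangle=\|S^{1/2}e_m\|^{2}\geq 0$.

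I expect the only delicate point to be exactly this last bookkeeping: that $TST$, a priori merely a densely defined symmetric operator because $S$ is unbounded, genuinely is a nonnegative self-adjoint operator whose trace is basis-independent and can therefore be read off in the eigenbasis of $T$. If one prefers to sidestep extension theory, one may simply adopt as definitions that ``$TST\geq 0$'' means nonnegativity of the quadratic form $\psi\mapsto\langle T\psi,S(T\psi)\rangle$ on $\mathfrak D$ and that $\mathrm{trace}(TST):=\sum_m\langle e_m,TSTe_m\rangle$; the same short computation then delivers the formula directly, and the right-hand side, being visibly a sum of nonnegative terms, is well defined in $[0,+\infty]$.
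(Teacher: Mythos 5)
The paper itself gives no proof of this fact: it is quoted verbatim as Footnote~3 of reference [6] (Golse--Paul) and then applied directly in Step~1 of the proof of the energy-conservation lemma, so there is no in-paper argument to compare against. Your proof is correct and is the standard argument one would expect behind such a citation: diagonalize $T$ via the spectral theorem for compact self-adjoint operators, observe that $TST$ is well defined on the linear span $\mathfrak D$ of the eigenvectors (since $T$ maps $\mathfrak D$ into $D(S)$) with nonnegative quadratic form $\psi\mapsto\langle T\psi,S(T\psi)\rangle$, and evaluate the trace in that eigenbasis. The Friedrichs detour is not strictly needed and slightly overcommits to a particular reading of ``$TST\geq0$''; the alternative interpretation you offer in the closing paragraph---positivity of the form on $\mathfrak D$ together with $\mathrm{trace}(TST):=\sum_m\langle e_m,TSTe_m\rangle$---is cleaner and is exactly how the fact is used in the paper, where the trace is always taken against the eigenbasis of $R_{N}^{in}$. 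If you do keep the Friedrichs route, make explicit that $(TST,\mathfrak D)$ is symmetric (a one-line check from $T=T^{*}$ and the symmetry of $S$) and that $\mathfrak D\subset D(A)$ for the Friedrichs extension $A$, so that the pairing $\langle e_m,Ae_m\rangle=\langle e_m,TSTe_m\rangle$ is legitimate and the Tonelli double-sum argument gives basis-independence of $\sum_m\|A^{1/2}f_m\|^2$ over ONBs in the form domain; both points are routine. You are also right that $\psi_m$ in the displayed formula is a misprint for $e_m$.
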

\textit{Proof of lemma (\ref{ENERGY CON})}. \textbf{Step 1. $\mathcal{\mathscr{H}}_{N}R_{N}(t)^{\frac{1}{2}}$
is Hilbert--Schmidt.} Let $\{e_{m}\}_{m\geq1}$ be a complete system
of eigenfunctions of $R_{N}^{in}$ with $R_{N}^{in}e_{m}=\lambda_{m}e_{m}$.
By fact (\ref{footnote })

\begin{equation}
||\mathcal{\mathscr{H}}_{N}\sqrt{R_{N}(t)}||_{`2}^{2}=\underset{m\geq1}{\sum}\lambda_{m}||\mathcal{\mathscr{H}}_{N}e_{m}||_{2}^{2}.\label{eq:-26}
\end{equation}

Owing to lemma (\ref{RELATIVE BOUND }) we get 

\[
||\mathcal{\mathscr{H}}_{N}e_{m}||_{2}^{2}=\left\langle (\mathscr{K}_{N}+\mathscr{V}_{N})e_{m},(\mathscr{K}_{N}+\mathscr{V}_{N})e_{m}\right\rangle 
\]

\[
\leq2||\mathscr{K}_{N}e_{m}||_{2}^{2}+2||\mathscr{V}_{N}e_{m}||_{2}^{2}\leq C_{N,\epsilon,\hbar}(||\mathscr{K}_{N}e_{m}||_{2}^{2}+||e_{m}||_{2}^{2}),
\]

which implies that the right hand side of (\ref{eq:-26}) is 

\[
\leq C_{N,\epsilon,\hbar}\underset{m\geq1}{\sum}\lambda_{m}||(I+\mathscr{K}_{N})e_{m}||_{\mathfrak{H}_{N}}^{2}=C_{N,\epsilon,\hbar}\mathrm{trace}_{\mathfrak{H}_{N}}(\sqrt{R_{N}^{in}}(I+\mathscr{K}_{N})^{2}\sqrt{R_{N}^{in}})<\infty
\]

by assumption. 

\textbf{Step 2.} \textbf{$\mathrm{trace}_{\mathfrak{H}_{N}}(\sqrt{R_{N}(t)}\mathcal{\mathscr{H}}_{N}\sqrt{R_{N}(t)})$
is constant.} As a result of step 1 and Cauchy-Schwarz we see that
for all $t$ the operator $\sqrt{R_{N}(t)}\mathcal{\mathscr{H}}_{N}\sqrt{R_{N}(t)}$
is trace class. Furthermore 

\[
\mathrm{trace}_{\mathfrak{H}_{N}}(\sqrt{R_{N}(t)}\mathcal{\mathscr{H}}_{N}\sqrt{R_{N}(t)})
\]

\[
=\stackrel[k=1]{\infty}{\sum}\lambda_{k}\left\langle \mathcal{U}_{N}(t)e_{k},\mathcal{\mathscr{H}}_{N}\mathcal{U}_{N}(t)e_{k}\right\rangle 
\]

\[
=\stackrel[k=1]{\infty}{\sum}\lambda_{k}\left\langle \mathcal{U}_{N}(t)e_{k},\mathcal{U}_{N}(t)\mathcal{\mathscr{H}}_{N}e_{k}\right\rangle 
\]

\begin{equation}
=\mathrm{\mathrm{trace}}(\sqrt{R_{N}^{in}}\mathcal{\mathscr{H}}_{N}\sqrt{R_{N}^{in}}).\label{eq:-27}
\end{equation}

\textbf{Step 3.} \textbf{Energy Conservation. }Denote by $S_{N}(t,X_{N},Y_{N})$
the integral kernel of $\sqrt{R_{N}(t)}$ . Then we have 

\[
\mathrm{trace}_{\mathfrak{H}_{N}}(\sqrt{R_{N}(t)}\mathcal{\mathscr{H}}_{N}\sqrt{R_{N}(t)})
\]

\[
=\stackrel[j=1]{N}{\sum}\underset{\mathbb{R}^{2N}\times\mathbb{R}^{2N}}{\int}\overline{S_{N}(t,X_{N},Y_{N})}(K^{j}+\frac{1}{2}|x_{j}|^{2})S_{N}(t,X_{N},Y_{N})dX_{N}dY_{N}
\]

\[
+\frac{1}{N\epsilon}\underset{1\leq j<k\leq N}{\sum}\underset{\mathbb{R}^{2N}\times\mathbb{R}^{2N}}{\int}\overline{S_{N}(t,X_{N},Y_{N})}V(x_{j}-x_{k})S_{N}(t,X_{N},Y_{N})dX_{N}dY_{N}
\]

\[
=\frac{1}{2}\stackrel[j=1]{N}{\sum}\underset{\mathbb{R}^{2N}\times\mathbb{R}^{2N}}{\int}|(-i\hbar\nabla_{x_{j}}+\frac{1}{2\epsilon}x_{j}^{\bot})S_{N}(t,X_{N},Y_{N})|^{2}+|x_{j}|^{2}|S_{N}(t,X_{N},Y_{N})|^{2}dX_{N}dY_{N}
\]

\[
+\frac{1}{N\epsilon}\underset{1\leq j<k\leq N}{\sum}\underset{\mathbb{R}^{2N}\times\mathbb{R}^{2N}}{\int}\overline{S_{N}(t,X_{N},Y_{N})}V(x_{j}-x_{k})S_{N}(t,X_{N},Y_{N})dX_{N}dY_{N}
\]

\[
=\frac{1}{2}\mathrm{trace}_{\mathfrak{H}_{N}}(\sqrt{R_{N}(t)}\mathscr{K}_{N}\sqrt{R_{N}(t)})+\frac{N-1}{2\epsilon}\underset{\mathbb{R}^{2N}}{\int}V_{12}\rho_{N}(t,X_{N})dX_{N}.
\]

The claim follows from equation (\ref{eq:-27}). 
\begin{onehalfspace}
\begin{flushright}
$\square$
\par\end{flushright}
\end{onehalfspace}

\section{Appendix A}

Following closely {[}14{]} and {[}2{]}, we aim to explain here how
a classical solution to equation (\ref{E}) $(\omega,u)\in C^{1}([0,T];C^{0,\alpha}(\mathbb{R}^{2}))\times C^{1}([0,T];C_{\mathrm{loc}}^{1,\alpha}(\mathbb{R}^{2})\cap L^{\infty}(\mathbb{R}^{2}))$
with $\alpha\in(0,1)$, where $\omega(t,\cdot)$ a compactly supported
probability density on $\mathbb{R}^{2}$ and $\nabla u\in L^{\infty}([0,T];L^{\infty}\cap L^{2}(\mathbb{R}^{2}))$,
can be realized for some suitable initial data. First we recall that
given a bounded and summable vorticity $\omega$, the $L^{\infty}$
norm of the velocity field attached to $\omega$ is controlled by
the $L^{\infty}\cap L^{1}$ norm of $\omega$, as summarized in the
following simple 
\begin{lem}
Let $\omega\in L^{\infty}(\mathbb{R}^{2})\cap L^{1}(\mathbb{R}^{2})$
and let $u$ be the velocity attached to it via the Biot-Savart law,
i.e. 
\begin{equation}
u(x)\coloneqq\frac{1}{2\pi}\frac{(\cdot)^{\bot}}{|\cdot|^{2}}\star\omega(x).\label{bio savarat}
\end{equation}
 Then 
\[
||u||_{\infty}\leq C(||\omega||_{\infty}+||\omega||_{1}).
\]
\end{lem}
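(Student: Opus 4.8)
The plan is to prove the $L^\infty$ bound on the Biot--Savart velocity by splitting the singular kernel $\frac{1}{2\pi}\frac{x^{\bot}}{|x|^2}$ into a near-field piece and a far-field piece, and then estimating each contribution using a different $L^p$ norm of $\omega$. Write $K(x)\coloneqq\frac{1}{2\pi}\frac{x^{\bot}}{|x|^2}$, so that $|K(x)|=\frac{1}{2\pi|x|}$, and fix $x\in\mathbb{R}^2$. Decompose
\[
u(x)=\underset{|y|\le 1}{\int}K(y)\,\omega(x-y)\,dy+\underset{|y|>1}{\int}K(y)\,\omega(x-y)\,dy\coloneqq u_{\mathrm{near}}(x)+u_{\mathrm{far}}(x).
\]

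For the near-field term I would bound $|\omega(x-y)|\le\|\omega\|_{\infty}$ and observe that $K\in L^1(B_1)$, since $\int_{|y|\le1}\frac{1}{2\pi|y|}\,dy=\int_0^1 r\cdot\frac{1}{r}\,dr=1<\infty$ in two dimensions; hence $|u_{\mathrm{near}}(x)|\le\|\omega\|_{\infty}\int_{|y|\le1}\frac{dy}{2\pi|y|}=\|\omega\|_{\infty}$. For the far-field term I would instead use the pointwise bound $|K(y)|\le\frac{1}{2\pi}$ for $|y|>1$ and integrate against $\omega$ directly: $|u_{\mathrm{far}}(x)|\le\frac{1}{2\pi}\int_{|y|>1}|\omega(x-y)|\,dy\le\frac{1}{2\pi}\|\omega\|_1$. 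Adding the two estimates gives $\|u\|_{\infty}\le\|\omega\|_{\infty}+\frac{1}{2\pi}\|\omega\|_1\le C(\|\omega\|_{\infty}+\|\omega\|_1)$ with, say, $C=1$, which is the claim.

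There is essentially no obstacle here: the only subtlety worth noting is that the splitting radius (taken to be $1$ above) must be a fixed finite number so that both $\int_{|y|\le R}|K|$ and $\sup_{|y|>R}|K|$ are finite; any $R>0$ works, and $R=1$ is convenient. One should also remark that the convolution in \eqref{bio savarat} is well defined pointwise for $\omega\in L^\infty\cap L^1$ precisely because of this same near/far decomposition — $K\mathbf{1}_{B_1}\in L^1$ pairs with $\omega\in L^\infty$ and $K\mathbf{1}_{B_1^c}\in L^\infty$ pairs with $\omega\in L^1$ — so the integral defining $u(x)$ is absolutely convergent, and no separate justification is needed. This is the standard Young-type argument ($L^1\ast L^\infty\subset L^\infty$ and $L^\infty\ast L^1\subset L^\infty$) applied to the two halves of the kernel, and it is the reason the statement is recorded as a "simple" lemma.
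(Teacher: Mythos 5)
Your proof is correct and follows essentially the same route as the paper's: both split the Biot--Savart kernel into a near-field piece (integrable, paired with $\|\omega\|_\infty$) and a far-field piece (bounded, paired with $\|\omega\|_1$). The only cosmetic difference is that the paper uses a smooth cutoff $\eta$ supported in $B_2$ rather than the sharp indicator of $B_1$, which changes nothing in the estimate.
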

\begin{proof}
Let $0\leq\eta\leq1,\eta\in C_{0}^{\infty}(\mathbb{R}^{2})$ such
that $\eta\equiv1$ on $B_{1}(0)$ and $\eta\equiv0$ outside $B_{2}(0)$.
Then 
\[
|u(x)|\leq\left|\underset{\mathbb{R}^{2}}{\int}\frac{(x-y)^{\bot}\eta(x-y)\omega(y)}{|x-y|^{2}}dy|\right|+\left|\underset{\mathbb{R}^{2}}{\int}\frac{(x-y)^{\bot}(1-\eta)(x-y)\omega(y)}{|x-y|^{2}}dy\right|
\]

\[
\lesssim||\frac{\eta}{|\cdot|}\star\omega(x)||_{\infty}+||\frac{1-\eta}{|\cdot|}\star\omega||_{\infty}\lesssim||\omega||_{\infty}+||\omega||_{1}.
\]
\end{proof}
Next, below is stated a uniqueness and existence theorem for equation
(\ref{E})
\begin{thm}
\textup{\label{existence uniqueness for euler} ({[}14{]}, Theorem
8.2)} Let the initial vorticity $\omega_{0}\in L_{0}^{\infty}(\mathbb{R}^{2})$.
Then there exists a unique weak solution (in the sense of definition
8.1 in {[}14{]}) to equation (\ref{E}) with $\omega\in L^{\infty}([0,T];L_{0}^{\infty}(\mathbb{R}^{2})).$
\end{thm}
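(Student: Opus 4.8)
The plan is to prove Theorem~\ref{existence uniqueness for euler} by the classical trajectory (flow--map) method of Yudovich, in the form presented in [14]. Write $K(x)\coloneqq\frac{1}{2\pi}\frac{x^{\bot}}{|x|^{2}}$ for the Biot--Savart kernel, so that $u=K\star\omega$ as in (\ref{bio savarat}). Two facts drive the whole argument: (i) the Lemma just proved, $\|u(t,\cdot)\|_{\infty}\le C(\|\omega(t,\cdot)\|_{\infty}+\|\omega(t,\cdot)\|_{1})$; and (ii) the log--Lipschitz bound $|u(t,x)-u(t,y)|\le C(\|\omega(t,\cdot)\|_{\infty}+\|\omega(t,\cdot)\|_{1})\,\theta(|x-y|)$, where $\theta(r)\coloneqq r(1+\log^{-}r)$ is an Osgood modulus; (ii) follows by splitting $K\star\omega$ into the contributions of $|z|\le|x-y|$ and of $|z|>|x-y|$. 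Both right--hand sides are controlled \emph{uniformly} on $[0,T]$: the flow transporting the vorticity is volume preserving, so every $L^{p}$ norm of $\omega(t,\cdot)$ is conserved, and since $u$ is bounded, $|\Phi_{t}(x)-x|\le C t$ so $\mathrm{supp}\,\omega(t,\cdot)$ grows at most linearly, keeping $\|\omega(t,\cdot)\|_{1}$ bounded.

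For \textbf{existence} I would first mollify: take $\omega_{0}^{\delta}\in C_{0}^{\infty}(\mathbb{R}^{2})$ with $\omega_{0}^{\delta}\to\omega_{0}$ in every $L^{p}$ ($p<\infty$), $\|\omega_{0}^{\delta}\|_{\infty}\le\|\omega_{0}\|_{\infty}$, and supports in a fixed ball. Classical theory produces global smooth solutions $(\omega^{\delta},u^{\delta})$ with flows $\Phi_{t}^{\delta}$ solving $\dot\Phi_{t}^{\delta}(x)=u^{\delta}(t,\Phi_{t}^{\delta}(x))$ and $\omega^{\delta}(t,\cdot)=(\Phi_{t}^{\delta})_{\#}\omega_{0}^{\delta}$. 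The bounds above make the $\Phi_{t}^{\delta}$ uniformly bounded on $[0,T]$ and equicontinuous in $x$ (the $\delta$--independent log--Lipschitz bound feeds Osgood's lemma to give a $\delta$--independent modulus of continuity), hence equicontinuous in $t$ too. By Arzel\`{a}--Ascoli a subsequence satisfies $\Phi_{t}^{\delta}\to\Phi_{t}$ locally uniformly; set $\omega(t,\cdot)\coloneqq(\Phi_{t})_{\#}\omega_{0}$ and $u\coloneqq K\star\omega$. Since $K\star(\cdot)$ is continuous for this mode of convergence, $u^{\delta}\to u$ locally uniformly, the ODE and the transport identity pass to the limit, $\omega\in L^{\infty}([0,T];L_{0}^{\infty}(\mathbb{R}^{2}))$ with the linear support bound, and $(\omega,u)$ is a weak solution in the sense of Definition 8.1 of [14] --- one tests against divergence--free $C_{0}^{\infty}$ fields, and the quadratic term converges because $u^{\delta}\omega^{\delta}\to u\omega$ weak--$\star$ in $L^{1}_{\mathrm{loc}}$.

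For \textbf{uniqueness}, let $(\omega^{1},u^{1})$ and $(\omega^{2},u^{2})$ be two solutions with the same datum, with flows $\Phi^{1}_{t},\Phi^{2}_{t}$, and set $Q(t)\coloneqq\underset{\mathbb{R}^{2}}{\int}|\Phi^{1}_{t}(x)-\Phi^{2}_{t}(x)|\,\omega_{0}(x)\,dx$. Differentiating and using $\omega^{i}(t,\cdot)=(\Phi^{i}_{t})_{\#}\omega_{0}$ gives $\dot Q(t)\le\underset{\mathbb{R}^{2}}{\int}|u^{2}(t,\Phi^{1}_{t}(x))-u^{2}(t,\Phi^{2}_{t}(x))|\,\omega_{0}\,dx+\underset{\mathbb{R}^{2}}{\int}|(u^{1}-u^{2})(t,\Phi^{1}_{t}(x))|\,\omega_{0}\,dx$. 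By (ii), Jensen against the finite measure $\omega_{0}\,dx$ (using concavity of $\theta$), and the uniform control of $\|\omega^{i}(t,\cdot)\|_{L^{\infty}\cap L^{1}}$, the first integral is $\le C\,\theta(Q(t))$. In the second, write $(u^{1}-u^{2})(t,y)=\underset{\mathbb{R}^{2}}{\int}\bigl(K(y-\Phi^{1}_{t}(a))-K(y-\Phi^{2}_{t}(a))\bigr)\omega_{0}(a)\,da$; integrating $|u^{1}-u^{2}|$ against $\omega^{1}(t,\cdot)$, bounding $\omega^{1}\le\|\omega_{0}\|_{\infty}$ on its (bounded) support, and for each $a$ splitting the $y$--integral at radius $\sim|\Phi^{1}_{t}(a)-\Phi^{2}_{t}(a)|$ (near part $O(|\Phi^{1}_{t}(a)-\Phi^{2}_{t}(a)|)$ from $|K|\sim1/|z|$, far part $O(\theta(|\Phi^{1}_{t}(a)-\Phi^{2}_{t}(a)|))$ from $|\nabla K|\sim1/|z|^{2}$) once more yields $\le C\,\theta(Q(t))$ by Jensen. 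Thus $\dot Q\le C\,\theta(Q)$, $Q(0)=0$, and since $\int_{0^{+}}dr/\theta(r)=\infty$, Osgood's lemma forces $Q\equiv0$; hence $\Phi^{1}\equiv\Phi^{2}$ and $\omega^{1}\equiv\omega^{2}$, $u^{1}\equiv u^{2}$.

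The hard part will be this uniqueness estimate: one must bound $K\star(\omega^{1}-\omega^{2})$ purely in terms of the trajectory discrepancy $Q(t)$ and combine it with the merely log--Lipschitz modulus of $u^{2}$ so that the two borderline logarithms assemble into an Osgood--integrable right--hand side rather than a genuine loss; the splitting radius must be tied to the pointwise displacement $|\Phi^{1}_{t}(a)-\Phi^{2}_{t}(a)|$, and one must carry along the time--dependent but uniformly bounded constants coming from $\|\omega^{i}(t,\cdot)\|_{L^{\infty}\cap L^{1}}$ and the linear growth of the supports. Everything else is a routine compactness-plus-approximation argument.
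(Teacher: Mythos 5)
The paper does not prove this statement: it is quoted verbatim as Theorem~8.2 of~[14] (Majda--Bertozzi) and invoked as a black box in Appendix~A, so there is no ``paper's proof'' to compare against. What you have written is, in substance, a faithful reconstruction of the Yudovich particle--trajectory argument that~[14] itself uses for this theorem --- the $\delta$--independent log--Lipschitz modulus for $u=K\star\omega$, Osgood plus Arzel\`a--Ascoli for existence via mollified data, and the weighted trajectory discrepancy $Q(t)$ closed by the two--logarithm splitting and Osgood's lemma for uniqueness --- so in the only meaningful sense it ``matches'' the source.

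One small technical point you should fix: the theorem allows $\omega_0\in L^\infty_0(\mathbb{R}^2)$ to change sign, so $\omega_0\,dx$ is a signed measure and Jensen's inequality for the concave modulus $\theta$ cannot be applied against it directly. Define instead $Q(t)\coloneqq\int_{\mathbb{R}^2}|\Phi^1_t(x)-\Phi^2_t(x)|\,|\omega_0(x)|\,dx$, replace $\omega_0$ by $|\omega_0|$ wherever it is used as an integration weight, and use $|\omega^i(t,\cdot)|=(\Phi^i_t)_{\#}|\omega_0|$ (which still holds since the flows are measure preserving) when rewriting $u^1-u^2=K\star(\omega^1-\omega^2)$; every estimate then goes through unchanged. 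For the purposes of the present paper this is moot, since Theorem~\ref{Main Thm} takes $\omega\ge0$, but as a proof of the theorem as stated the absolute values are needed. Everything else --- in particular the splitting of the $y$--integral at radius $\sim|\Phi^1_t(a)-\Phi^2_t(a)|$, with the near part giving $O(d(a))$ from $|K|\sim1/|z|$ and the far part giving $O(d(a)\log(R/d(a)))$ from $|\nabla K|\sim1/|z|^2$, where $R$ is controlled by the linear growth of the supports --- is exactly right and assembles into $\dot Q\le C\,\theta(Q)$ as you claim.
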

Denoting by $X^{t}(\alpha)$ the particle trajectory map of the velocity
(see 1.3 in {[}14{]}), we have 
\begin{prop}
\textup{({[}14{]}, Proposition 8.3)} Suppose that $\omega_{0}\in L^{1}(\mathbb{R}^{2})\cap L^{\infty}(\mathbb{R}^{2})$
such that $\omega_{0}\in C^{m,\alpha}(\overline{\Omega_{0}})$ where
$\Omega_{0}$ is open and bounded. Then $\omega(t,\cdot)\in C_{\mathrm{loc}}^{m,\alpha}(\Omega_{t}),u(t,\cdot)\in C_{\mathrm{loc}}^{m+1,\alpha}(\Omega_{t})$
where 

\[
\Omega_{t}\coloneqq\{X^{t}(x)|x\in\Omega_{0}\}.
\]
\end{prop}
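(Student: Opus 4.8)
The plan is to derive the propagation of interior H\"older regularity from the Yudovich well-posedness theory of Theorem \ref{existence uniqueness for euler} by a Schauder-type bootstrap along the Lagrangian flow, following {[}14{]}. First I would fix the ambient structure: Theorem \ref{existence uniqueness for euler} furnishes a unique weak solution $\omega\in L^{\infty}([0,T];L^{1}\cap L^{\infty}(\mathbb{R}^{2}))$, and writing $K(x)\coloneqq\frac{1}{2\pi}\frac{x^{\bot}}{|x|^{2}}$ for the Biot--Savart kernel so that $u=K\star\omega$ by (\ref{bio savarat}), the classical log-Lipschitz estimate for $K$ sharpens the preceding lemma to $|u(t,x)-u(t,y)|\le C(\|\omega\|_{L^{1}\cap L^{\infty}})\,|x-y|\,(1+|\log|x-y||)$. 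This modulus of continuity is Osgood, so the trajectory ODE $\partial_{t}X^{t}(\alpha)=u(t,X^{t}(\alpha))$, $X^{0}(\alpha)=\alpha$, has a unique solution and $X^{t}:\mathbb{R}^{2}\to\mathbb{R}^{2}$ is a homeomorphism for each $t$, with $X^{t}$ and $(X^{t})^{-1}$ quasi-Lipschitz (H\"older with the time-dependent exponent $e^{-Ct}$). Transport of vorticity gives $\omega(t,X^{t}(\alpha))=\omega_{0}(\alpha)$, i.e. $\omega(t,\cdot)=\omega_{0}\circ(X^{t})^{-1}$, and $\Omega_{t}=X^{t}(\Omega_{0})$ is again open and bounded; the claim is thus reduced to showing that $X^{t}$ restricts to a $C^{m,\alpha}$ diffeomorphism of $\Omega_{0}$ onto $\Omega_{t}$.

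Second, the analytic engine is an interior potential-theory estimate for the velocity: if $g\in L^{1}\cap L^{\infty}(\mathbb{R}^{2})$ and $g\in C^{k,\alpha}(\overline{D})$ for some open bounded $D$, then $K\star g\in C^{k+1,\alpha}_{\mathrm{loc}}(D)$, with its $C^{k+1,\alpha}$ norm on any $D'\Subset D$ bounded by $C(\mathrm{dist}(D',\partial D))(\|g\|_{L^{1}}+\|g\|_{C^{k,\alpha}(\overline{D})})$. This follows by representing $\nabla(K\star g)$ as a Calder\'on--Zygmund principal-value singular integral plus a bounded multiple of $g$, and splitting the kernel into a near-field part, estimated by the local H\"older seminorm of $g$ exactly as in the Schauder estimates for the Newtonian potential, and a smoothing far-field part estimated by $\|g\|_{L^{1}}$. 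Taking $g=\omega(t,\cdot)$ and $D$ a slightly shrunken copy of $\Omega_{t}$ then converts control of $\omega(t,\cdot)$ on $\Omega_{t}$ into control of $u(t,\cdot)$ on $\Omega_{t}$, gaining one derivative.

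Third, I would close the loop by induction on $m$ with a Gronwall argument in time (together, at the base stage, with a continuation argument upgrading a short-time estimate to all of $[0,T]$). Differentiating the flow ODE in the Lagrangian label, $D_{\alpha}X^{t}$ solves $\partial_{t}D_{\alpha}X^{t}=(Du(t,X^{t}))D_{\alpha}X^{t}$; for $\alpha\in\Omega_{0}$ the trajectory stays in $\Omega_{t}$, where by the previous step $Du(t,\cdot)$ is bounded in $C^{0,\alpha}$, so Gronwall yields a bound on $\|D_{\alpha}X^{t}\|_{C^{0,\alpha}}$ over interior subdomains, uniform on $[0,T]$, and likewise for $(X^{t})^{-1}$ by running the flow backwards; this is the case $m=0$, giving $\omega(t,\cdot)=\omega_{0}\circ(X^{t})^{-1}\in C^{0,\alpha}_{\mathrm{loc}}(\Omega_{t})$ and hence $u(t,\cdot)\in C^{1,\alpha}_{\mathrm{loc}}(\Omega_{t})$. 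Assuming the statement through order $m-1$ gives $u(t,\cdot)\in C^{m,\alpha}_{\mathrm{loc}}(\Omega_{t})$, so the higher Lagrangian derivatives of $X^{t}$ satisfy linear ODEs whose inhomogeneities are the usual higher-order chain-rule polynomials in the already-controlled derivatives of $u$ along the flow; thus $X^{t}$ and its inverse are $C^{m,\alpha}$ on interior subdomains of $\Omega_{0}$, composing with $\omega_{0}\in C^{m,\alpha}(\overline{\Omega_{0}})$ gives $\omega(t,\cdot)\in C^{m,\alpha}_{\mathrm{loc}}(\Omega_{t})$, and a final application of the potential-theory estimate upgrades this to $u(t,\cdot)\in C^{m+1,\alpha}_{\mathrm{loc}}(\Omega_{t})$.

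The step I expect to be the main obstacle is this interior potential-theory estimate combined with the bookkeeping of the moving geometry: the H\"older norms in play are interior ones, so one must track how the distance from a compactly contained subdomain of $\Omega_{0}$ to $\partial\Omega_{0}$ is distorted under the flow --- it degrades at worst at the quasi-Lipschitz rate dictated by the log-Lipschitz bound on $u$ --- and then verify that all constants generated by the Gronwall iteration stay finite on $[0,T]$. The singular-integral estimate itself is classical, but the near/far-field splitting and the uniform-in-time control require care; the remaining ingredients are Gronwall arguments and the chain rule.
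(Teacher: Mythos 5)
The paper offers no proof of this proposition: it is stated as a direct citation to {[}14, Proposition 8.3{]} and invoked in Appendix A purely to justify that the regularity class assumed for $(\omega,u)$ in Theorem \ref{Main Thm} is nonempty, so there is no internal argument to compare against and what is being asked of you is effectively a reconstruction of the Majda--Bertozzi proof. Your outline does follow that reference's strategy: log-Lipschitz Biot--Savart bound and Osgood uniqueness giving a quasi-Lipschitz flow, an interior Schauder estimate for the singular integral $K\star g$ gaining one derivative locally, and a Lagrangian bootstrap through the variational equation for $D_{\alpha}X^{t}$.

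The one place your sketch is too quick --- and it is the crux --- is the base case $m=0$. As written, the appeal to ``the previous step'' to assert $Du(t,\cdot)\in C^{0,\alpha}$ is circular: at the start of the base case $(X^{t})^{-1}$ is only quasi-Lipschitz, i.e.\ H\"older with time-decaying exponent $\beta(t)=e^{-ct}$, so composing with $\omega_{0}\in C^{0,\alpha}(\overline{\Omega_{0}})$ yields a priori only $\omega(t,\cdot)\in C^{0,\alpha\beta(t)}_{\mathrm{loc}}(\Omega_{t})$, not $C^{0,\alpha}$. The bootstrap has to be run in two passes: the degraded exponent is still positive, so it feeds the interior Schauder estimate to give $u(t,\cdot)\in C^{1,\alpha\beta(t)}_{\mathrm{loc}}(\Omega_{t})\subset C^{1}_{\mathrm{loc}}(\Omega_{t})$; a Gronwall bound on $\|D_{\alpha}X^{t}\|_{\infty}$ then makes $X^{t}$ and $(X^{t})^{-1}$ Lipschitz on compacta of $\Omega_{0}$ and $\Omega_{t}$ respectively; only at that point does composition recover the full exponent $\omega(t,\cdot)\in C^{0,\alpha}_{\mathrm{loc}}(\Omega_{t})$, a second Schauder pass gives $u\in C^{1,\alpha}_{\mathrm{loc}}(\Omega_{t})$, and a final Gronwall in the H\"older seminorm yields $D_{\alpha}X^{t}\in C^{0,\alpha}$ on compacta. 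Your parenthetical remark about a continuation argument hints at this, but the exponent-upgrading is the substance of the base step and should be spelled out; the higher-$m$ induction and the bookkeeping of interior distances under the quasi-Lipschitz flow, which you already flagged as the main obstacle, are then as you describe.
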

Therefore given a compactly supported probability density $\omega_{0}\in C_{\mathrm{loc}}^{0,\alpha}(\mathbb{R}^{2})$
the unique solution provided by theorem (\ref{existence uniqueness for euler})
will be in fact a classical solution $(\omega,u)\in C^{1}([0,T],C_{\mathrm{loc}}^{0,\alpha}(\mathbb{R}^{2}))\times C^{1}([0,T],C_{\mathrm{loc}}^{1,\alpha}(\mathbb{R}^{2}))$
with compactly supported vorticity. Indeed, since $\omega(t,x)=\omega_{0}(X^{-t}(x))$,
we see that the support of $\omega(t,x)$ is included in $\Omega_{t}$
and is locally H\textcyr{\"\cyro}lder regular on $\Omega_{t}$ and
therefore this is true on the entire space $\mathbb{R}^{2}$. The
$C_{\mathrm{loc}}^{1,\alpha}(\mathbb{R}^{2})$ regularity for $u$
follows from elliptic regularity and that $X^{t}$ is a homeomorphism.
Of course, that $\omega(t,\cdot)$ is a probability density for all
times $t\in[0,T]$ is because it satisfies a transport equation (and
thus the initial sign and $L^{1}$ norm are both preserved). Since
the Gronwall estimate of section \ref{sec:Gronwall-Estimate CHAP 2}
utilizes the fact that $\nabla u(t,\cdot)$ is square summable, we
also recall the Calderon-Zygmund inequality
\begin{lem}
\textup{( {[}14{]}, Lemma 8.3)} Suppose $\omega\in L^{\infty}([0,T];L_{0}^{\infty}(\mathbb{R}^{2})$
is a weak solution to equation (\ref{E}). Then for all $1<p<\infty$
\[
||\nabla u(t,\cdot)||_{p}\leq C(||\omega_{0}||_{\infty})p.
\]
\end{lem}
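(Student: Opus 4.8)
The plan is to write $\nabla u$ as a fixed, time-independent Calderon--Zygmund singular integral acting on the vorticity, to exploit the measure-preserving transport structure of (\ref{E}) to freeze the $L^{p}$ norm of $\omega$ at its initial value, and to control the latter by interpolation between $L^{1}$ and $L^{\infty}$.

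First I would record that, since $\mathrm{div}(u)=0$ and $\omega=\mathrm{curl}(u)$, the stream function satisfies $\psi=\Delta^{-1}\omega$ and $u=\nabla^{\bot}\psi$, so that every entry of $\nabla u$ is, up to a sign and a bounded multiple of $\omega$, of the form $R_{i}R_{j}\omega$, a composition of two Riesz transforms; equivalently $\nabla u=\nabla K\star\omega$ where $K$ is the Biot--Savart kernel of (\ref{bio savarat}) and $\nabla K$ is homogeneous of degree $-2$ with vanishing mean on circles. In particular $\nabla u=T\omega$ for a Calderon--Zygmund operator $T$ that does not depend on $t$. The classical real-variable theory --- the weak $(1,1)$ estimate combined with the Marcinkiewicz interpolation theorem and $L^{2}$-boundedness --- gives the operator-norm bound $||T||_{L^{p}\to L^{p}}\le Cp$ as $p\to\infty$, which I would quote rather than reprove.

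Next I would use the transport identity $\omega(t,x)=\omega_{0}(X^{-t}(x))$ valid for Yudovich solutions (cf. Theorem \ref{existence uniqueness for euler} and the proposition following it), where $X^{t}$ is the particle-trajectory flow of $u$. Since $u$ is divergence free, Liouville's theorem gives $\det(\nabla X^{t})\equiv 1$, hence $X^{-t}$ preserves Lebesgue measure and $||\omega(t,\cdot)||_{p}=||\omega_{0}||_{p}$ for all $p$ and all $t\in[0,T]$. Combining the two steps, $||\nabla u(t,\cdot)||_{p}\le Cp\,||\omega_{0}||_{p}$; and since $\omega_{0}\in L_{0}^{\infty}(\mathbb{R}^{2})$ is a probability density, interpolation gives $||\omega_{0}||_{p}\le ||\omega_{0}||_{1}^{1/p}||\omega_{0}||_{\infty}^{1-1/p}=||\omega_{0}||_{\infty}^{1-1/p}\le\max(1,||\omega_{0}||_{\infty})$, which depends only on $||\omega_{0}||_{\infty}$. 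This yields the claimed bound $||\nabla u(t,\cdot)||_{p}\le C(||\omega_{0}||_{\infty})\,p$ in the relevant (large $p$) regime; for $p$ bounded away from $1$ it is immediate from the $L^{p}$-boundedness of $T$ together with the same interpolation inequality. The only non-routine ingredient is the sharp linear-in-$p$ growth of the Calderon--Zygmund operator norm, so the ``hard part'' is essentially choosing and citing the right form of the Calderon--Zygmund theorem; the remainder is bookkeeping around the volume-preserving transport and an elementary interpolation.
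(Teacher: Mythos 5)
The paper does not prove this lemma; it simply cites Majda--Bertozzi ([14], Lemma 8.3) and uses it as a black box in the Gronwall estimate. Your argument reconstructs essentially the standard proof underlying that citation, and the three ingredients you isolate are the correct ones: (i) $\nabla u=T\omega$ with $T$ a zero-order Calderon--Zygmund operator (iterated Riesz transforms plus a bounded piece coming from the $\delta$-part of $\partial_{i}\partial_{j}\Delta^{-1}$), whose $L^{p}\to L^{p}$ norm grows linearly in $p$ by weak-$(1,1)$ plus Marcinkiewicz interpolation; (ii) the Lagrangian transport identity $\omega(t,\cdot)=\omega_{0}\circ X^{-t}$ for Yudovich solutions together with incompressibility, giving $\|\omega(t,\cdot)\|_{p}=\|\omega_{0}\|_{p}$ for all $t$; (iii) the interpolation $\|\omega_{0}\|_{p}\le\|\omega_{0}\|_{1}^{1/p}\|\omega_{0}\|_{\infty}^{1-1/p}$. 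This is the right chain and it yields the stated bound.

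One small point worth flagging: as written, the lemma's constant $C(\|\omega_{0}\|_{\infty})$ cannot depend \emph{only} on $\|\omega_{0}\|_{\infty}$ in general --- a general $\omega_{0}\in L_{0}^{\infty}$ also carries an $L^{1}$ (or support-size) scale, and your interpolation step correctly produces a factor $\|\omega_{0}\|_{1}^{1/p}$. You dispose of it by invoking the paper's standing normalization that $\omega$ is a probability density, so $\|\omega_{0}\|_{1}=1$; that is the right reading in context, but it is an additional hypothesis beyond what the lemma's displayed statement advertises, and the constant in the cited Majda--Bertozzi version really does depend on the size of the support of $\omega_{0}$ as well. With that caveat acknowledged, your proof is correct and is the intended one.
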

When H\textcyr{\"\cyro}lder regularity on $\omega$ is available,
the limit case $p=\infty$ can also be covered via the following 
\begin{lem}
\textup{( {[}2{]}, Proposition 7.7)} Assume $\omega\in L^{a}(\mathbb{R}^{2})\cap C^{0,\alpha}(\mathbb{R}^{2})$
for some $1\leq a<\infty$ and $\alpha\in(0,1)$. Let $u$ be the
velocity attached to it. There is a constant $C$ such that 
\[
||\nabla u||_{\infty}\leq C\left(||\omega||_{a}+||\omega||_{\infty}\log(e+\frac{||\omega||_{C^{0,\alpha}}}{||\omega||_{\infty}})\right).
\]
\end{lem}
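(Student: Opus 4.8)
The plan is to use the Biot--Savart representation together with the Calder\'on--Zygmund structure of the second derivatives of the Newtonian potential, and then to split the resulting singular integral into three annular regions whose contributions are controlled respectively by $\|\omega\|_{C^{0,\alpha}}$, $\|\omega\|_{\infty}$ and $\|\omega\|_{a}$, followed by an optimization over the two cutoff radii.

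\textbf{Step 1. The kernel.} Since $u=\nabla^{\bot}(V\star\omega)$ with $V(x)=-\tfrac{1}{2\pi}\log|x|$, every entry of $\nabla u$ is a second derivative $\partial_{i}\partial_{j}(V\star\omega)$. Away from the origin $\partial_{i}\partial_{j}V$ is smooth, homogeneous of degree $-2$, bounded on $S^{1}$, and has vanishing mean on every circle centered at the origin (a direct computation using $\int_{S^{1}}\theta_{i}\theta_{j}\,d\sigma=\pi\delta_{ij}$). Hence one has the standard representation
\[
\nabla u(x)=\mathrm{p.v.}\int_{\mathbb{R}^{2}}H(x-y)\,\omega(y)\,dy+c\,\omega(x),
\]
with $H$ matrix-valued, homogeneous of degree $-2$, bounded on $S^{1}$, mean-zero on circles, and $|c|\le C$.

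\textbf{Step 2. Splitting.} Fix $x\in\mathbb{R}^{2}$ and $0<\delta\le 1\le R$ to be chosen, and estimate the principal value integral over $\{|x-y|<\delta\}$, $\{\delta\le|x-y|\le R\}$ and $\{|x-y|>R\}$. On the inner region the cancellation of $H$ lets us replace $\omega(y)$ by $\omega(y)-\omega(x)$, so that its contribution is at most $[\omega]_{C^{0,\alpha}}\int_{|z|<\delta}|z|^{\alpha-2}\,dz\le C\,\|\omega\|_{C^{0,\alpha}}\,\delta^{\alpha}$. On the middle region no cancellation is needed and $\|\omega\|_{\infty}\int_{\delta\le|z|\le R}|z|^{-2}\,dz=C\,\|\omega\|_{\infty}\log(R/\delta)$. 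On the outer region H\"older's inequality with exponents $a$ and $a'$ gives $\|\omega\|_{a}\big(\int_{|z|>R}|z|^{-2a'}\,dz\big)^{1/a'}=C\,\|\omega\|_{a}\,R^{-2/a}$, which is finite precisely because $a<\infty$ (so $a'>1$). Together with $|c\,\omega(x)|\le C\|\omega\|_{\infty}$ this yields
\[
|\nabla u(x)|\le C\Big(\|\omega\|_{C^{0,\alpha}}\delta^{\alpha}+\|\omega\|_{\infty}\log(R/\delta)+\|\omega\|_{a}R^{-2/a}+\|\omega\|_{\infty}\Big).
\]

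\textbf{Step 3. Optimization.} If $\omega\equiv0$ there is nothing to prove, so assume $\|\omega\|_{\infty}>0$; since $\|\omega\|_{\infty}\le\|\omega\|_{C^{0,\alpha}}$ the choice $\delta:=(\|\omega\|_{\infty}/\|\omega\|_{C^{0,\alpha}})^{1/\alpha}$ lies in $(0,1]$ and makes the first term $\le\|\omega\|_{\infty}$. Taking $R=1$ makes the outer term $\le C\|\omega\|_{a}$ and turns the middle term into $\tfrac{1}{\alpha}\|\omega\|_{\infty}\log(\|\omega\|_{C^{0,\alpha}}/\|\omega\|_{\infty})$; absorbing the stray $\|\omega\|_{\infty}$ into the logarithm via $1\le\log(e+t)$ for $t\ge0$ gives
\[
|\nabla u(x)|\le C\Big(\|\omega\|_{a}+\|\omega\|_{\infty}\log\big(e+\tfrac{\|\omega\|_{C^{0,\alpha}}}{\|\omega\|_{\infty}}\big)\Big)
\]
uniformly in $x$, which is the claim. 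The only genuinely delicate point is the bookkeeping in Step 3: one must tune the inner cutoff $\delta$ so that the H\"older term balances $\|\omega\|_{\infty}$ while keeping the outer cutoff fixed, so that the logarithmic loss appears exactly in the scale-invariant ratio $\|\omega\|_{C^{0,\alpha}}/\|\omega\|_{\infty}$; the three integral estimates themselves are routine once the mean-zero property of $H$ from Step 1 is available.
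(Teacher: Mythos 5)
The paper does not prove this lemma; it simply cites [2, Proposition~7.7]. Your argument is therefore a self-contained substitute, and it is correct. It is also a genuinely different route from what [2] does: Bahouri--Chemin--Danchin derive such logarithmic estimates via Littlewood--Paley decomposition and Besov-space interpolation, splitting in frequency into low, medium, and high blocks and using a Bernstein inequality together with the boundedness of the relevant Fourier multiplier on $L^{\infty}$ at each dyadic scale. Your version performs the analogous split in physical space: you write $\nabla u$ as a Calder\'on--Zygmund singular integral plus a local multiple of $\omega$, cut the convolution into the three annuli $\{|z|<\delta\}$, $\{\delta\le |z|\le R\}$, $\{|z|>R\}$, and exploit the mean-zero property of the kernel on the inner annulus to trade the non-integrable singularity for the H\"older seminorm. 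The choice $\delta=(\|\omega\|_{\infty}/\|\omega\|_{C^{0,\alpha}})^{1/\alpha}$, $R=1$ then reproduces the scale-invariant ratio inside the logarithm. Both routes give the same conclusion; the real-variable proof you give is more elementary and avoids any Besov machinery, at the (modest) cost of carrying out the principal-value bookkeeping by hand.

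Two small remarks. First, your parenthetical justification for the convergence of the outer integral, ``$a<\infty$ (so $a'>1$)'', is off at the endpoint $a=1$, where $a'=\infty$; there one simply uses $\sup_{|z|>R}|H(z)|\le CR^{-2}$ together with $\|\omega\|_{1}$, which still yields $\|\omega\|_{a}R^{-2/a}$ with $a=1$. Second, you tacitly identify $\nabla u$ with the principal-value expression plus the local term; a careful proof would note that under the hypotheses $\omega\in L^{a}\cap C^{0,\alpha}$ the principal value exists at every point and does equal the classical gradient of $u$ (e.g.~by mollifying $\omega$ and passing to the limit), so that the pointwise bound you establish really is a bound on $\|\nabla u\|_{\infty}$. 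Neither point affects the validity of the estimate.
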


\end{document}